\documentclass[11pt]{amsart}
\usepackage{amsmath,amssymb,mathrsfs,color}
\usepackage{graphicx}
\usepackage{epstopdf}
\usepackage{hyperref}
\usepackage{comment}
\usepackage{bbm}
\hypersetup{hypertex=true,
            colorlinks=true,
            linkcolor=blue,
            anchorcolor=blue,
            citecolor=blue}
\newtheorem{thm}{Theorem}[section]
\newtheorem{cor}[thm]{Corollary}
\newtheorem{lem}[thm]{Lemma}
\newtheorem{prop}[thm]{Proposition}

\theoremstyle{definition}
\newtheorem{de}[thm]{Definition}
\theoremstyle{remark}
\newtheorem{rem}[thm]{Remark}

\newtheorem{ass}[thm]{Assumption}
\numberwithin{equation}{section}
\allowdisplaybreaks

\begin{document}

\title[Semimartingale Optimal Transport with Jumps]{Semimartingale Optimal Transport with Jumps: A General Framework and Equivalent Formulations}
\author{Boyi Hou}
\address{B. Hou: School of Mathematics, Dalian University of Technology, Dalian 116024, P. R. China}
\email{boyi.hou@hotmail.com, houboyi@mail.dlut.edu.cn}

\author{Zhenxin Liu}
\address{Z. Liu (Corresponding author): School of Mathematics, Dalian University of Technology, Dalian 116024, P. R. China}
\email{zxliu@dlut.edu.cn}

\thanks{This work is supported by National Key R\&D Program of China (No. 2023YFA1009200), NSFC (Grants 12531009 and 11925102), and Liaoning Revitalization Talents Program (Grant XLYC2202042).}

\date{\today}
\keywords{Dynamic programming; Kantorovich duality; Markovian projection; non-local Fokker–Planck–Kolmogorov equation; semimartingale optimal transport.}
\subjclass[2020]{49Q22, 60G44, 49K45, 60H10, 60J76, 35Q84, 93E20.}

\begin{abstract}
We study a semimartingale optimal transport (SOT) problem where the cost depends on the full differential characteristics, and the minimisation is over all semimartingale laws with given marginals whose absolutely continuous characteristics lie in a prescribed closed convex set.
Under only minimal assumptions of measurability, convexity, and lower semicontinuity on the cost function,
we prove the existence of an optimal plan for SOT and establish a Kantorovich-type duality without time-regularity conditions.
We further prove that SOT admits four equivalent formulations:
(i) a Kantorovich duality formulation,
(ii) a viscosity solution formulation of the Hamilton--Jacobi--Bellman equation,
(iii) a martingale solution formulation via Markovian projection,
(iv) a PDE formulation via weak solutions of a non-local Fokker--Planck--Kolmogorov equation.
The framework simultaneously generalises classical optimal transport,
martingale optimal transport, Schr\"odinger bridge problems, and barycentric weak optimal transport.
\end{abstract}

\maketitle
\section{Introduction}
In the classical optimal transport problem of Monge--Kantorovich, one is given an initial probability distribution $\mu_0$ and a terminal distribution $\mu_1$ on $\mathbb{R}^d$.
An admissible transportation plan is a random vector $(X_0,X_1)$ (equivalently, a joint distribution on $\mathbb{R}^d\times\mathbb{R}^d$) with marginals $\mu_0$ and $\mu_1$; the cost of such a plan is $\mathbb{E}[c(X_0,X_1)]$ for a prescribed cost function $c:\mathbb{R}^d\times\mathbb{R}^d\to[0,+\infty)$, and the Monge--Kantorovich problem consists in minimising this cost over all admissible plans.
Under mild conditions, a duality result was established by Kantorovich, converting the problem into a linear programming formulation.
For the classical duality and further developments of the subject, we refer to Villani~\cite{Villani2009OT}.

When the cost is quadratic, $c(x,y)=|x-y|^2$, Benamou and Brenier~\cite{BB2000} discovered an equivalent dynamical formulation: minimise the action
\[
\int_0^1\!\!\int_{\mathbb{R}^d} |b_t(x)|^2\,\rho_t(\mathrm{d}x)\,\mathrm{d}t
\]
over all time‑dependent families of probability measures $(\rho_t)_{t\in[0,1]}$ and velocity fields $(b_t)_{t\in[0,1]}$ satisfying the continuity equation
\[
\partial_t \rho_t + \text{div}(b_t \rho_t) = 0,\qquad \rho_0 = \mu_0,\ \rho_1 = \mu_1 .
\]
This fluid‑dynamical picture interprets optimal transport as a geodesic flow in the Wasserstein space and admits the probabilistic representation
\[
X_t=X_0+\int_0^t b_s\,\mathrm{d}s,\quad X_0\sim\mu_0,\ X_1\sim\mu_1,
\]
where the drift $b_t$ encodes the velocity field driving the flow.

Semimartingale optimal transport generalises the above probabilistic formulation by replacing the deterministic flow with general semimartingale dynamics, and it ultimately extends to discontinuous semimartingales with jumps, thereby providing a highly flexible dynamical formulation of optimal transport.
The seminal contribution is due to Mikami and Thieullen~\cite{Mikami2006}, who introduced a stochastic mass transportation problem in which $X$ is an $\mathbb{R}^d$-valued continuous semimartingale of the form
\begin{equation}\label{dX=bdt+dW}
X_t = X_0 + \int_0^t b_s\,\mathrm{d}s + W_t
\end{equation}
with $W$ a $d$-dimensional standard Brownian motion under the filtration generated by $X$. The corresponding semimartingale optimal transport problem minimises the cost
\[
V(\mu_0,\mu_1):=\inf \mathbb{E}\Bigl[\int_0^1\ell(t,X_t,b_t)\,\mathrm{d}t\Bigr],
\]
where the infimum is taken over all semimartingales given by~\eqref{dX=bdt+dW} with initial law $\mu_0$ and terminal law $\mu_1$.
Tan and Touzi~\cite{TanTouzi2013AOP} extended this framework to path‑dependent noise by modelling $X$ as
\begin{equation}\label{dX=bdt+sigmadW}
X_t = X_0 + \int_0^t b_s\,\mathrm{d}s + M_t,
\end{equation}
where $M_t$ is the continuous local martingale part of $X$ with quadratic variation $c_t$.
Crucially, their work established a version of the classical Kantorovich duality in this controlled dynamics setting, linking the primal problem to a dual formulation that involves stochastic control and HJB equations.
More recently, Liu and Neufeld~\cite{LiuNeufeld2019TAMS} further generalised the problem to discontinuous semimartingales by providing tightness criteria for their laws: with a truncation function $h$, the process admits the canonical representation
\begin{equation}\label{X=X_0+B+M+J}
X_t = X_0 + \int_0^t b_s\,\mathrm{d}s + M_t^c + M_t^d + J_t,
\end{equation}
where $M_t^c$ is the continuous local martingale part of $X$ with quadratic variation $c_t$, $M_t^d$ is the purely discontinuous local martingale part, and $J_t = \sum_{0 \le s \le t} [\Delta X_s - h(\Delta X_s)]$. In the present paper we adopt \eqref{X=X_0+B+M+J} as our underlying framework and study the following semimartingale optimal transport problem with given initial and terminal marginals
$\mu_0,\mu_1$:
\[
V(\mu_0,\mu_1) = \inf_{\mathbb{P}}\; \mathbb{E}^{\mathbb{P}}\!\left[\int_0^1 L\bigl(t,\omega,b_t^{\mathbb{P}},c_t^{\mathbb{P}},F_t^{\mathbb{P}}\bigr)\,\mathrm{d}t\right],
\]
where the infimum is taken over all semimartingale laws $\mathbb{P}$ whose absolutely continuous differential characteristics $(b^{\mathbb{P}},c^{\mathbb{P}},F^{\mathbb{P}})$ lie $\mathbb{P}\times\mathrm{d}t$-a.s. in a prescribed closed convex set $\Theta\subseteq\mathbb{R}^d\times\mathbb{S}_+^d\times\mathcal{L}$. For each result, we provide the corresponding version in the continuous setting.

The semimartingale optimal transport problem is intimately linked to both the Skorokhod embedding problem (SEP) and martingale optimal transport (MOT)~\cite{BeiglbockPierreFriedrich2013FS, BeiglbockNicolas2016AOP}. As shown by Tan and Touzi~\cite{TanTouzi2013AOP}, the SEP can be reformulated as a semimartingale optimal transport problem with zero drift, thereby translating optimal stopping problems into an optimisation over semimartingale characteristics. A powerful mass transport perspective, pioneered by Beiglböck, Cox, and Huesmann~\cite{BeiglbockCoxHuesmann2017IM}, introduced a monotonicity principle that characterises optimal embeddings and martingale couplings through the geometry of their support sets; this principle systematically recovers all classical optimal embeddings, such as those of Root, Rost, and Azéma--Yor. The rigidity of these monotone sets was further investigated by Huesmann and Stebegg~\cite{HuesmannStebegg2018SPA}, and a complete duality theory for the optimal SEP was subsequently established by Beiglböck, Nutz, and Stebegg~\cite{BeiglbockNutzStebegg2022JEMS}. Further PDE characterisations of barrier‑type solutions for a continuum of marginals were given by Richard, Tan, and Touzi~\cite{RichardTanTouzi2020SIAMJCO}.

Semimartingale optimal transport has also found compelling applications in financial mathematics, especially in model calibration.  Traditional calibration
methods, such as Dupire's formula for local volatility, require a continuum of option prices and are therefore prone to numerical instabilities and interpolation errors. In contrast, the semimartingale optimal transport framework offers a non‑parametric, variational calibration approach that imposes the market quotes of a finite number of European options as discrete constraints within a convex optimisation problem; this methodology projects a chosen reference model onto the set of fully calibrated models while preserving as many desirable features of the reference as possible. We refer to~\cite{GLW2017Matrix, GuoLoeperWang2022MF} for the initial development and to~\cite{JosephLoeperObloj2024QF} for further extensions to stochastic interest rates and path‑dependent derivatives. Empirical evidence strongly supports the presence of jumps in both volatility and returns~\cite{YYEJP2003JF}, and our framework naturally nests such jump‑diffusion specifications; further models incorporating jumps are discussed in~\cite{Bates2022, YYAS2025Bernoulli, YYBC2026Bernoulli}.

Our first main contribution exploits the ``a.s. weak~$L^1$'' lower semicontinuity property of integral functionals. Under minimal regularity assumptions on the cost function ---
joint measurability, convexity in the control variables, and joint lower semicontinuity in both the path and the control --- we establish the lower semicontinuity of the integral
functional, and from this we deduce the existence of an optimal plan and a Kantorovich-type duality. A crucial feature of our approach is that we do
not require any continuity of the cost with respect to the time variable. Such continuity is a standard assumption in the frameworks of~\cite{Mikami2006,
TanTouzi2013AOP, LiuNeufeld2019TAMS, DweikGhoussoubKimPalmer2020Poincare, benamou2024entropicsemimartingaleoptimaltransport}; removing it substantially broadens the scope of semimartingale optimal transport.
In fact, the structural conditions we impose are essentially necessary for the lower semicontinuity of such integral functionals (see Balder~\cite[Theorem~4.12]{Balder1986}),
and therefore our hypotheses are close to optimal.

Our second main contribution is to establish several equivalent formulations of semimartingale optimal transport. More specifically, we obtain the following four equivalent characterisations.
\begin{itemize}
\item \textbf{Kantorovich duality formulation.} This generalises the classical dual formulation of optimal transport: the dual value is expressed as $\sup_{\lambda_1}\bigl[\int \lambda_0^{\lambda_1}\,\mathrm{d}\mu_0 - \int \lambda_1\,\mathrm{d}\mu_1\bigr]$, where $\lambda_1$ runs over all bounded continuous functions and $\lambda_0^{\lambda_1}$ is the initial value of a standard stochastic control problem with terminal cost $\lambda_1$.
\item \textbf{Viscosity solution formulation of the Hamilton--Jacobi--Bellman (HJB) equation.} The function $\lambda_0^{\lambda_1}$ can be characterised as the unique viscosity solution of the associated HJB equation (which is, essentially, a dynamic programming equation) with terminal condition $\lambda_1$.
\item \textbf{Martingale solution formulation via Markovian projection.} By the superposition principle, this formulation lifts the path‑dependent framework to a state‑dependent one. Concretely, for any semimartingale one applies the Markovian projection theorem (also known as the mimicking theorem) to construct a semimartingale with Markovian differential characteristics that matches the one‑dimensional marginal laws, and this process serves as a solution to the martingale problem (equivalently, a weak solution of an SDE).
\item \textbf{PDE formulation.} In the spirit of the original Benamou--Brenier formula and its probabilistic counterpart, the equivalence between martingale solutions and weak solutions of a non‑local Fokker–Planck–Kolmogorov (FPK) equation yields a PDE formulation of semimartingale optimal transport.
\end{itemize}

Finally, we present applications of semimartingale optimal transport theory. As a first application within classical optimal transport theory, we give a new proof of the Benamou--Brenier formula based on our framework. Under mild assumptions, we then establish the equivalence between the Benamou--Brenier formulation and the PDE formulation of martingale optimal transport for continuous-time continuous-path processes. Lastly, we prove the Kantorovich duality for martingale optimal transport with jumps, formulated on the Skorokhod space endowed with the $J_1$-topology.

The remainder of this paper is organised as follows. Section~\ref{2} introduces the semimartingale optimal transport problem and states our first main result. Section~\ref{3} introduces its counterpart on an enlarged space, which linearises the cost functional in the original problem, establishes our first main result, and provides the first equivalent formulation. In Section~\ref{sectmeasele}, by leveraging the stability of the feasible set, we present the standard dynamic programming principle and obtain a viscosity solution characterisation of the associated HJB equation. Section~\ref{5} employs the Markovian projection technique to derive both a martingale solution formulation and a PDE formulation. Finally, Section~\ref{6} discusses applications of these results.

\section{The semimartingale transport problem}\label{2}
\setcounter{equation}{0}
The notation and definitions in this section follow the conventions of~\cite{LiuNeufeld2019TAMS}.
Let $\mathbb{R}^d$ be the $d$-dimensional Euclidean space, endowed with its Borel $\sigma$-field $\mathcal{B}(\mathbb{R}^d)$, and denote by $\mathcal{P}(\mathbb{R}^d)$ the set of all probability measures on $(\mathbb{R}^d, \mathcal{B}(\mathbb{R}^d))$. Let $\Omega := \mathbb{D}([0,1],\mathbb{R}^d)$ be the space of all c\`{a}dl\`{a}g paths $\omega = (\omega_t)_{0\le t\le 1}$ equipped with the usual Skorokhod $J_1$-metric $d_{SK}$ and the corresponding Borel $\sigma$-field $\mathcal{F}$. We write $X=(X_t)_{0\le t\le 1}$ for the canonical process, i.e. $X_t(\omega)=\omega _t$ for every $\omega \in \Omega$, and denote by $\mathbb{F} =(\mathcal{F}_t)_{0\le t \le 1}$ the canonical filtration generated by $X$. The space $\mathcal{P}(\Omega)$ (resp. $\mathcal{M}(\Omega)$) of all probability (resp. finite nonnegative) measures on $(\Omega,\mathcal{F})$, equipped with the topology of weak convergence, is a Polish space.

Let $\mathbb{P} \in \mathcal{P}(\Omega)$ be a probability measure under which the canonical process $X$ is a $\mathbb{P}$-$\mathbb{F}$-semimartingale. That is, there exist c\`{a}dl\`{a}g adapted processes $M$ and $A$, with $M_0=A_0=0$, such that $M$ is a local martingale and $A$ has paths of finite variation $\mathbb{P}$-a.s., and $X = X_0 + M + A$ $\mathbb{P}$-a.s. 

Fix a truncation function $h:\mathbb{R}^d \to \mathbb{R}^d$ which is bounded, continuous and satisfies $h(x)=x$ in a neighbourhood of $0$. We denote by $(B^\mathbb{P}, C^\mathbb{P}, \nu^\mathbb{P})$ the characteristics of the semimartingale $X$, where $B^\mathbb{P}$ is the predictable finite variation part in the canonical decomposition of 
\[
X-\sum_{0\le s\le \cdot } \big(\Delta X_s-h(\Delta X_s)\big),
\]
$C^\mathbb{P}$ is the quadratic covariation of the continuous local martingale part of $X$, and $\nu^\mathbb{P}$ is the compensator of the integer-valued random measure $\mu^X$ associated to the jumps of $X$, with
\[
\mu^X(\omega, \mathrm{d}t, \mathrm{d}x) := \sum_{0\le s \le 1} \mathbf{1}_{\{\Delta X_s(\omega) \neq 0\}} \delta_{(s, \Delta X_s(\omega))}(\mathrm{d}t, \mathrm{d}x).
\] 
See~\cite{JacodShiryaev2003} for more details. We emphasise that $X$ is a $\mathbb{P}$-semimartingale for $\mathbb{F}$ if and only if it has this property for the right-continuous filtration $\mathbb{F}_+$ or the usual augmentation $\mathbb{F}_{+}^{\mathbb{P}}$, and the semimartingale characteristics of $X$ computed with respect to $\mathbb{F}$, $\mathbb{F}_+$ and $\mathbb{F}_{+}^{\mathbb{P}}$ coincide; see~\cite[Proposition~2.2]{NeufeldNutz2014SPA}. This fact allows us to circumvent delicate measurability issues. Consequently, the reader may safely regard all theorems as stated on the original space, while the proofs are carried out under its usual augmentation.

Given a semimartingale law $\mathbb{P}$, we say that $X$ has absolutely continuous characteristics if the triplet processes $(B^\mathbb{P}, C^\mathbb{P}, \nu^\mathbb{P})$ are absolutely continuous in $t$ with respect to the Lebesgue measure, $\mathbb{P}$-a.s. Then we can write $(\mathrm{d}B^\mathbb{P},\mathrm{d}C^\mathbb{P},\mathrm{d}\nu^\mathbb{P}) = (b_t^\mathbb{P}\mathrm{d}t, c_t^\mathbb{P}\mathrm{d}t, F_t^\mathbb{P}\mathrm{d}t)$. The differential characteristics $(b^\mathbb{P}, c^\mathbb{P}, F^\mathbb{P})$ take values in $\mathbb{R}^d \times \mathbb{S}_+^d \times \mathcal{L}$, where $\mathbb{S}_+^d$ denotes the set of all symmetric, nonnegative definite $d \times d$ matrices, equipped with the Frobenius norm and the corresponding Borel $\sigma$-field (thus a Polish space), and
\[
\mathcal{L} := \Bigl\{ F \in \mathcal{M}(\mathbb{R}^d) \Bigm| \int_{\mathbb{R}^d} (|x|^2 \wedge 1) \, F(\mathrm{d}x) < \infty \text{ and } F(\{0\}) = 0 \Bigr\}
\]
denotes the set of L\'evy measures. We endow $\mathcal{L}$ with the topology of weak convergence induced by the bounded continuous functions on $\mathbb{R}^d$ vanishing in a neighbourhood of the origin. Then $\mathcal{L}$ is also Polish; see~\cite[Lemma~2.3]{NeufeldNutz2014SPA}. We then equip $\mathbb{R}^d\times \mathbb{S}_+^d \times \mathcal{L}$ with the corresponding product topology.

Write
$$\mathfrak{P}_{sem}^{ac}:=\big \{ \mathbb{P} \in \mathcal{P}(\Omega) \mid X \text{ has absolutely continuous characteristics} \big \},$$
which is measurable (see~\cite[Theorem~2.5]{NeufeldNutz2014SPA}). Note that absolute continuity does not depend on the choice of the truncation function $h$ (see~\cite[Proposition~2.24]{JacodShiryaev2003}).

For a given measurable subset $\Theta \subseteq \mathbb{R}^d\times \mathbb{S}_+^d \times \mathcal{L}$, we define
$$\mathfrak{P}_{\Theta}:=\big \{ \mathbb{P} \in \mathfrak{P}_{sem}^{ac} \mid \big ( b^\mathbb{P}, c^\mathbb{P}, F^\mathbb{P} \big ) \in \Theta \quad \mathbb{P} \times \mathrm{d}t \text{-a.s.} \big \},$$
the set of all semimartingale laws in $\mathfrak{P}_{sem}^{ac}$ whose differential characteristics take values in $\Theta$ $\mathbb{P} \times \mathrm{d}t$-a.s. It is worth noting that $\mathfrak{P}_{\Theta}$ is measurable whenever $\Theta$ is measurable (see~\cite[Theorem~2.6]{NeufeldNutz2014SPA}).

For a probability measure $\mu_0 \in \mathcal{P}(\mathbb{R}^d)$ we further set
$$\mathfrak{P}_{\Theta}(\mu_0):=\big \{ \mathbb{P} \in \mathfrak{P}_{\Theta} \mid \mathbb{P} \circ X_0^{-1} = \mu_0 \big \},$$
the set of all elements in $\mathfrak{P}_{\Theta}$ with initial distribution $\mu_0$. Similarly, for another probability measure $\mu_1 \in \mathcal{P}(\mathbb{R}^d)$, 
$$\mathfrak{P}_{\Theta}(\mu_0, \mu_1):=\big \{ \mathbb{P} \in \mathfrak{P}_{\Theta} (\mu_0) \mid \mathbb{P} \circ X_1^{-1} = \mu_1 \big \}$$
denotes the set of all elements in $\mathfrak{P}_{\Theta}(\mu_0)$ with terminal distribution $\mu_1$.

Next, we associate to each $\mathbb{P}$ a transportation cost
\begin{equation}\label{JP}
J(\mathbb{P}):=\mathbb{E}^{\mathbb{P}}\Bigl[\int_{0}^{1} L\big(t, X, b_t^{\mathbb{P}}, c_t^{\mathbb{P}}, F_t^{\mathbb{P}}\big) \, \mathrm{d}t \Bigr],
\end{equation}
where $L:[0,1] \times \Omega \times \Theta \to \mathbb{R}^+ $ is a given cost function. 

Our main object of study is the following semimartingale optimal transport problem, given two probability measures $\mu_0, \mu_1 \in \mathcal{P}(\mathbb{R}^d)$:
\begin{align}\label{SOT}
V(\mu_0, \mu_1) := \inf_{\mathbb{P} \in \mathfrak{P}_{\Theta}(\mu_0, \mu_1)} J (\mathbb{P}),
\end{align}
with the convention $\inf \emptyset = +\infty$.

We now introduce the duality theorem corresponding to \eqref{SOT}, which can be regarded as an extension of the classical Kantorovich duality in optimal transport theory.
We begin by stating a set of conditions that guarantee compactness of the admissible set of characteristics.
\begin{ass}\label{B}
A set $\Theta \subseteq \mathbb{R}^d \times \mathbb{S}_+^d \times \mathcal{L}$ satisfies
\begin{equation*}
\sup_{(b,c,F)\in \Theta } \bigl \{ |b|+|c|+\int_{\mathbb{R}^d} (|x|^2 \wedge |x|) \, F(\mathrm{d}x) \bigr \} < \infty.
\end{equation*}
\end{ass}

\begin{ass}\label{J}
A set $\Theta \subseteq \mathbb{R}^d \times \mathbb{S}_+^d \times \mathcal{L}$ satisfies
\begin{equation*}
\lim_{\delta  \downarrow  0} \sup_{(b,c,F)\in \Theta}\int_{\{ | z | \le \delta \} } | z | ^2 \, F(\mathrm{d}z)=0.
\end{equation*}
\end{ass}

\begin{rem}\label{Thetacompact}
If $\Theta$ is closed, convex and satisfies Assumptions~\ref{B} and~\ref{J}, then 
$\mathfrak{P}_{\Theta}(\mu_0,\mu_1)$ is a compact convex set under the weak convergence 
topology. This is a direct consequence of an Arzel{\`a}--Ascoli type result proved in 
\cite[Theorem~2.5]{LiuNeufeld2019TAMS}.
The same compactness property continues to hold when $\mathfrak{P}_{\Theta}(\mu_0,\mu_1)$ 
is replaced by the larger set 
$\mathfrak{P}_{\Theta}(\Gamma_0,\Gamma_1):=\{\mathbb{P}\in \mathfrak{P}_{\Theta}\mid\mathbb{P} \circ X_{0}^{-1} \in \Gamma_0,\,\mathbb{P} \circ X_{1}^{-1} \in \Gamma_1 \}$, 
where $\Gamma_0,\,\Gamma_1\subseteq\mathcal{P}(\mathbb{R}^d)$ are compact. In particular, under 
the stated conditions, the set $\Theta$ itself is compact.
\end{rem}

%We then have the following Arzelà–Ascoli type result~\cite[Theorem~2.5]{LiuNeufeld2019TAMS}:
%\begin{lem}\label{PThetacompact}
%If $\Theta$ is closed, convex and satisfies Assumptions~\ref{B} and~\ref{J}, then $\mathfrak{P}_{\Theta}(\mu_0,\mu_1)$ is a compact convex set under the weak convergence topology.
%\end{lem}
%\begin{rem}
%The result also holds when $\mathfrak{P}_{\Theta}(\mu_0,\mu_1)$ is replaced by $\mathfrak{P}_{\Theta}(\Gamma):=\big \{ \mathbb{P} \in \mathfrak{P}_{\Theta} \mid \mathbb{P} \circ X_0^{-1} \in \Gamma \big \}$, where $\Gamma\subseteq\mathcal{P}(\mathbb{R}^d)$ is compact. In particular, $\Theta$ itself is compact under these conditions.
%\end{rem}

Next, we impose the following condition on the cost function $L$, which is needed for our results.
\begin{ass}\label{L}
The cost function $L:[0,1]\times\Omega\times\Theta\to[0,+\infty)$ is nonnegative, jointly measurable, jointly lower semicontinuous in $(\omega,\theta)$ and convex in $\theta$.
\end{ass}

We now state a lower semicontinuity and convexity result for the semimartingale optimal transport problem. The proof is relegated to Section~\ref{3}.
\begin{thm}\label{Vsemicontinuous}
Let $\Theta\subseteq\mathbb{R}^d\times\mathbb{S}_+^d\times\mathcal{L}$ be closed, convex, and satisfy Assumptions~\ref{B} and~\ref{J}. Moreover, let the cost function $L$ satisfy Assumption~\ref{L}. 
Then the function
\begin{equation}
V: \mathcal{P}(\mathbb{R}^d)\times\mathcal{P}(\mathbb{R}^d) \to [0,+\infty], \quad (\mu_0,\mu_1)\mapsto V(\mu_0,\mu_1)
\end{equation}
is lower semicontinuous and convex.
\end{thm}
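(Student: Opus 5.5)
The plan is to prove lower semicontinuity of $V$ by combining the compactness of Remark~\ref{Thetacompact} with lower semicontinuity of $J$ on $\mathfrak{P}_\Theta$. Convexity will then follow by linearising $J$ on an enlarged space, so that mixtures of semimartingale laws can be handled.

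\textbf{Lower semicontinuity.} Let $(\mu_0^n,\mu_1^n)\to(\mu_0,\mu_1)$ weakly. We may assume $\liminf_n V(\mu_0^n,\mu_1^n)<\infty$ and pass to a subsequence realising the $\liminf$. Choose $\mathbb{P}^n\in\mathfrak{P}_\Theta(\mu_0^n,\mu_1^n)$ with $J(\mathbb{P}^n)\le V(\mu_0^n,\mu_1^n)+1/n$. The sets $\Gamma_i=\{\mu_i^n\}_n\cup\{\mu_i\}$ are compact, so Remark~\ref{Thetacompact} shows that $\mathfrak{P}_\Theta(\Gamma_0,\Gamma_1)$ is compact. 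Hence a further subsequence converges to some $\mathbb{P}$ in this set. Since $X_0$ and $X_1$ are continuous at fixed times $0$ and $1$ on the Skorokhod space, the marginals pass to the limit, and $\mathbb{P}\in\mathfrak{P}_\Theta(\mu_0,\mu_1)$. It therefore suffices to show $J(\mathbb{P})\le\liminf_n J(\mathbb{P}^n)$.

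\textbf{Lower semicontinuity of $J$ (main obstacle).} To handle the fact that the characteristics depend on $\mathbb{P}$, I lift each $\mathbb{P}^n$ to a measure $\overline{\mathbb{P}}^n$ on $\Omega\times\mathcal{M}$. Here $\mathcal{M}$ is a space of measures on $[0,1]\times\Theta$, and the second component is $\delta_{(b^n_t,c^n_t,F^n_t)}(\mathrm{d}\theta)\,\mathrm{d}t$. Because $\Theta$ is compact, these lifts are tight. Along a subsequence $\overline{\mathbb{P}}^n\to\overline{\mathbb{P}}$, and by Skorokhod representation we get a.s.\ convergence $(\omega^n,q^n)\to(\omega,q)$ with $q=q_t(\mathrm{d}\theta)\mathrm{d}t$.

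The key step is the ``a.s.\ weak $L^1$'' lower semicontinuity result in the spirit of Balder. Since $L$ is nonnegative, measurable in $t$, jointly l.s.c.\ in $(\omega,\theta)$, and $\omega^n_t\to\omega_t$ for a.e.\ $t$ under $J_1$ convergence, Balder's theorem gives
\[
\int_0^1 \int_\Theta L(t,\omega,\theta)\,q_t(\mathrm{d}\theta)\,\mathrm{d}t \le \liminf_n \int_0^1 L(t,\omega^n,\theta^n_t)\,\mathrm{d}t .
\]
No time regularity is needed for this. Fatou's lemma then yields $\mathbb{E}^{\overline{\mathbb{P}}}[\int\!\int L\,\mathrm{d}q]\le\liminf_n J(\mathbb{P}^n)$.

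It remains to identify the limit. Using the martingale-problem characterisation of the characteristics, which is preserved under the limit with bounded test functions, I will show that $\mathbb{P}$ has differential characteristics $(b_t,c_t,F_t)$ equal to the predictable projection (conditional expectation) of the barycentre $\int\theta\,q_t(\mathrm{d}\theta)$. Convexity of $\Theta$ keeps this barycentre in $\Theta$. Jensen's inequality, using convexity of $L$ in $\theta$ together with conditioning on $\omega$, then gives $J(\mathbb{P})\le\mathbb{E}^{\overline{\mathbb{P}}}[\int\!\int L\,\mathrm{d}q]$. I expect this identification to be the delicate part: the characteristics of the limit must be proved to be the conditional barycentre, which requires the lifted martingale problems to be stable, with the Lévy component controlled by Assumption~\ref{J}.

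\textbf{Convexity.} Let $\mathbb{P}^i\in\mathfrak{P}_\Theta(\mu_0^i,\mu_1^i)$ for $i=1,2$, let $\alpha\in[0,1]$, and set $\mathbb{P}=\alpha\mathbb{P}^1+(1-\alpha)\mathbb{P}^2$. Under $\mathbb{P}$, $X$ is a semimartingale with marginals $\alpha\mu_j^1+(1-\alpha)\mu_j^2$. Its characteristics are obtained from the density process $Z_t=\mathrm{d}\mathbb{P}^1/\mathrm{d}\mathbb{P}|_{\mathcal{F}_t}$ as $\theta_t=\frac{\alpha Z_{t-}\theta^1_t+(1-\alpha)(1-\ldots)\theta^2_t}{\ldots}$, i.e.\ as a predictable convex combination of $\theta^1_t$ and $\theta^2_t$, so $\theta_t\in\Theta$. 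Convexity of $L$ in $\theta$ then gives $J(\mathbb{P})\le\alpha J(\mathbb{P}^1)+(1-\alpha)J(\mathbb{P}^2)$. Taking infima over $\mathbb{P}^1$ and $\mathbb{P}^2$ proves convexity of $V$. Alternatively, the same conclusion follows by applying the linearised enlarged-space formulation from the previous step and then the projection argument.
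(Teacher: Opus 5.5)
Your proof is correct in outline, but it reaches the key step by a genuinely different route.

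\textbf{The paper's route.} The paper lifts $\mathbb{P}^n$ via $\Psi^{\mathbb{P}^n}$ to $\overline{\Omega}$, where the \emph{integrated} characteristics $(\overline{B},\overline{C},\{\overline{V}^i\})$ are coordinates of the canonical process. Closedness of the lifted class is imported from Liu--Neufeld (Lemma~\ref{V=Vbar}(1)). After Skorokhod representation, the pathwise derivatives $(\overline b,\overline c,\overline v)$ are shown to converge weakly in $L^1(\mathrm{d}t\times\lambda)$, using uniform integrability, Dunford--Pettis, and identification of the weak limits by Vitali. Balder's Theorem~4.9 for convex normal integrands then gives $\overline{J}(\overline{\mathbb{P}})\le\liminf_n\overline{J}(\overline{\mathbb{P}}^n)$; it needs a.s.\ convergence of the path, weak $L^1$ convergence of the control, and a Nagumo-tightness check in $\mathbb{R}^d\times\mathbb{R}^{d^2}\times\mathbb{R}^{\mathbb{N}}$. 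The return to $\Omega$ is Lemma~\ref{V=Vbar}(3), and convexity is taken from Tan--Touzi's enlarged-space argument.

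\textbf{Your route.} Your relaxed-control (Young-measure) lift makes tightness automatic from compactness of $\Theta$. Balder's narrow-convergence lower semicontinuity theorem with varying integrands $L(t,\omega^n,\cdot)$ needs no convexity at all. Convexity of $L$ and of $\Theta$ enters only through the barycentre and Jensen.

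The price is that you must prove two things yourself, which the paper outsources to Liu--Neufeld's compactness of $\overline{\mathfrak{P}}_\Theta$:
\begin{itemize}
\item stability of the relaxed martingale problem in the filtration generated by $(X,q|_{[0,t]})$;
\item identification of the limit characteristics as the projected barycentre.
\end{itemize}
This is where Assumption~\ref{J} gives continuity of $\theta\mapsto\mathscr{L}^{\theta}f$ and Assumption~\ref{B} gives uniform bounds. Your density-process proof of convexity is a direct substitute for the paper's enlarged-space linearity argument. Note that the mixture's characteristics are the convex combination with weights $\alpha Z_{t-}$ and $1-\alpha Z_{t-}$; no denominator is needed.

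\textbf{A correction to your final Jensen step.} Conditioning on the whole path $\omega$ does not produce the characteristics of $\mathbb{P}$. Those are the projections of $\int\theta\,q_t(\mathrm{d}\theta)$ onto $\mathcal{F}_{t-}$, not onto $\mathcal{F}_1$.

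What you need is conditional Jensen given $\mathcal{F}_{t-}$. This requires $L(t,\cdot,\theta)$ to be $\mathcal{F}_t$-measurable, i.e.\ non-anticipative. The same property is needed in your convexity argument, to replace $\mathbb{E}^{\mathbb{P}}[Z_{t}L(t,X,\theta^1_t)]$ by $\mathbb{E}^{\mathbb{P}^1}[L(t,X,\theta^1_t)]$.

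The paper's projection step in Lemma~\ref{V=Vbar}(3) rests on the same property. So this is a hypothesis to make explicit, not a defect specific to your route.
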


Then, we define the dual formulation of \eqref{SOT} by 
\begin{equation}\label{DP}
\mathcal{V}(\mu_0,\mu_1):=\sup_{\lambda_1\in C_b(\mathbb{R}^d)}\Bigl\{ \int_{\mathbb{R}^d}\lambda_0^{\lambda_1}(x)\,\mu_0(\mathrm{d}x)-\int_{\mathbb{R}^d}\lambda_1(x)\,\mu_1(\mathrm{d}x) \Bigr\}, 
\end{equation}
where 
\begin{equation}\label{Rc}
\lambda_0^{\lambda_1}(x):=\inf_{\mathbb{P}\in\mathfrak{P}_{\Theta}(\delta_x)}\mathbb{E}^{\mathbb{P}}\Bigl[\int_{0}^{1} L\big(t, X, b_t^{\mathbb{P}}, c_t^{\mathbb{P}}, F_t^{\mathbb{P}}\big) \, \mathrm{d}t+\lambda_1(X_1) \Bigr].
\end{equation}
Moreover, for any $\mu_0\in\mathcal{P}(\mathbb{R}^d)$, if Assumption~\ref{L} holds, then by~\cite[Lemma~5.4]{LiuNeufeld2019TAMS} the function $\lambda_0^{\lambda_1}$ is measurable with respect to the Borel $\sigma$-field on $\mathbb{R}^d$ completed by $\mu_0$, and
\begin{equation}\label{intRc}
\int_{\mathbb{R}^d}\lambda_0^{\lambda_1}(x)\,\mu_0(\mathrm{d}x)=\inf_{\mathbb{P}\in\mathfrak{P}_{\Theta}(\mu_0)}\mathbb{E}^{\mathbb{P}}\Bigl[\int_{0}^{1} L\big(t, X, b_t^{\mathbb{P}}, c_t^{\mathbb{P}}, F_t^{\mathbb{P}}\big) \, \mathrm{d}t+\lambda_1(X_1) \Bigr].
\end{equation}
The main duality theorem reads as follows. A direct proof can be given by combining Theorem~\ref{Vsemicontinuous} with classical convex analysis, essentially following the lines of~\cite[Theorem~2.16]{LiuNeufeld2019TAMS} (which in turn builds on~\cite[Theorem~3.6]{TanTouzi2013AOP} and~\cite[Theorem~2.1]{Mikami2006}). 
%For the sake of completeness, we relegate the proof to the Supplementary Material.
For completeness, we present the proof.
\begin{thm}\label{D}
Let $\Theta\subseteq\mathbb{R}^d\times\mathbb{S}_+^d\times\mathcal{L}$ be closed, convex, and satisfy Assumptions~\ref{B} and~\ref{J}. Moreover, let the cost function $L$ satisfy Assumption~\ref{L}. Then we have
\begin{equation}
V(\mu_0,\mu_1)=\mathcal{V}(\mu_0,\mu_1)\quad \text{for all } \mu_0,\mu_1\in\mathcal{P}(\mathbb{R}^d),
\end{equation}
and the infimum in \eqref{SOT} is attained by some $\mathbb{P}^{\ast}\in\mathfrak{P}_{\Theta}(\mu_0,\mu_1)$ whenever $V(\mu_0,\mu_1)<\infty$.
\end{thm}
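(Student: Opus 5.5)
We prove the duality through the Fenchel--Moreau theorem applied to $V(\mu_0,\cdot)$ for fixed $\mu_0$, following the scheme of \cite[Theorem~2.16]{LiuNeufeld2019TAMS}. Existence of a minimiser is handled separately.

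\textbf{Existence.} Suppose $V(\mu_0,\mu_1)<\infty$. Then $\mathfrak{P}_{\Theta}(\mu_0,\mu_1)$ is nonempty, and by Remark~\ref{Thetacompact} it is compact. The key input is lower semicontinuity of $\mathbb{P}\mapsto J(\mathbb{P})$ on $\mathfrak{P}_\Theta$, which is the ingredient behind Theorem~\ref{Vsemicontinuous} proved in Section~\ref{3}. Concretely, one lifts $\mathbb{P}$ to the enlarged space carrying the characteristics, where $J$ becomes linear and lower semicontinuous by the a.s.\ weak $L^1$ lower semicontinuity of convex integrands. Granted this, a minimising sequence has a convergent subsequence in the compact set, and its limit $\mathbb{P}^\ast$ attains the infimum.

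\textbf{Duality.} Fix $\mu_0$ and regard $\mu_1\mapsto V(\mu_0,\mu_1)$ as a function on $\mathcal{M}(\mathbb{R}^d)$, the space of finite signed measures in duality with $C_b(\mathbb{R}^d)$ under the weak topology. Extend it by $+\infty$ off $\mathcal{P}(\mathbb{R}^d)$; the extension remains convex and lower semicontinuous because $\mathcal{P}(\mathbb{R}^d)$ is closed and convex. By Theorem~\ref{Vsemicontinuous} the function is convex, nonnegative and lower semicontinuous, so Fenchel--Moreau gives $V(\mu_0,\mu_1)=V^{\ast\ast}(\mu_0,\mu_1)$. We compute the conjugate for $\varphi\in C_b$:
\[
V^\ast(\varphi)=\sup_{\mu_1}\Bigl\{\int\varphi\,\mathrm{d}\mu_1-V(\mu_0,\mu_1)\Bigr\}
=\sup_{\mathbb{P}\in\mathfrak{P}_\Theta(\mu_0)}\mathbb{E}^{\mathbb{P}}\Bigl[\varphi(X_1)-\int_0^1 L\,\mathrm{d}t\Bigr].
\]
Setting $\lambda_1=-\varphi$ and using \eqref{intRc}, this equals $-\int\lambda_0^{\lambda_1}\,\mathrm{d}\mu_0$. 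Hence
\[
V^{\ast\ast}(\mu_1)=\sup_{\lambda_1\in C_b}\Bigl\{\int\lambda_0^{\lambda_1}\,\mathrm{d}\mu_0-\int\lambda_1\,\mathrm{d}\mu_1\Bigr\}=\mathcal{V}(\mu_0,\mu_1).
\]
Weak duality $\mathcal{V}\le V$ also follows directly: for any $\mathbb{P}\in\mathfrak{P}_\Theta(\mu_0,\mu_1)$, the definition of $\lambda_0^{\lambda_1}$ together with \eqref{intRc} gives $\int\lambda_0^{\lambda_1}\,\mathrm{d}\mu_0\le J(\mathbb{P})+\int\lambda_1\,\mathrm{d}\mu_1$.

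\textbf{Technical points.} The main obstacle is making Fenchel--Moreau rigorous in this topology. The space $\mathcal{M}(\mathbb{R}^d)$ with the $\sigma(\mathcal{M},C_b)$ topology is locally convex with dual $C_b$. Lower semicontinuity of $V$ is established under weak convergence of probability measures, which coincides with the trace of this topology on $\mathcal{P}$. One must then check that the $+\infty$ extension is closed in $\mathcal{M}$, which holds because $\mathcal{P}$ is $\sigma(\mathcal{M},C_b)$-closed. The case $V\equiv+\infty$ is trivial: either both sides are infinite, or the biconjugate is computed as above with the convention $\sup$ over an empty set. The other delicate point is the measurability in \eqref{intRc}, which we take from \cite[Lemma~5.4]{LiuNeufeld2019TAMS} as stated.
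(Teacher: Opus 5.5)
Your proposal is correct and follows essentially the same route as the paper. Existence comes from the compactness in Remark~\ref{Thetacompact} combined with the lower semicontinuity established in Section~\ref{3}; strictly, the paper proves lower semicontinuity of $\overline{J}$ on the enlarged space and transfers it back via $J(\overline{\mathbb{P}}\circ\overline{X}^{-1})\le\overline{J}(\overline{\mathbb{P}})$ from Lemma~\ref{V=Vbar}, which yields the lower semicontinuity of $J$ you invoke. Duality is obtained, as in your plan, by extending $V(\mu_0,\cdot)$ by $+\infty$ to finite signed measures paired with $C_b(\mathbb{R}^d)$, applying the biconjugate theorem, and identifying the conjugate at $-\lambda_1$ as $-\int\lambda_0^{\lambda_1}\,\mathrm{d}\mu_0$ via \eqref{intRc}; the degenerate case $V(\mu_0,\cdot)\equiv+\infty$ is dispatched the same way, since \eqref{intRc} then forces $\int\lambda_0^{\lambda_1}\,\mathrm{d}\mu_0=+\infty$.
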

\begin{proof}
Recall from Remark~\ref{Thetacompact} and Theorem~\ref{Vsemicontinuous} that a minimiser of the semimartingale optimal transport problem \eqref{SOT} exists whenever $V(\mu_0,\mu_1)<+\infty$.

To obtain the duality, first consider the case where $V(\mu_0,\cdot)\equiv+\infty$. Then $J(\mathbb{P})=+\infty$ for all $\mathbb{P}\in\mathfrak{P}_{\Theta}(\mu_0)$, and by the definition of the dual problem together with \eqref{intRc} we obtain $\mathcal{V}(\mu_0,\mu_1)=+\infty$ for every $\mu_1$; thus the duality holds trivially.

Assume now that $V(\mu_0,\cdot)$ is not identically infinite. Let $\mathfrak{M}_{f,s}(\mathbb{R}^d)$ be the space of finite signed measures on $\mathbb{R}^d$, equipped with the coarsest topology making the maps $\mu\mapsto\int\phi\,\mathrm{d}\mu$ continuous for all $\phi\in C_b(\mathbb{R}^d)$. The relative topology on the subset $\mathcal{P}(\mathbb{R}^d)$ then coincides with its usual weak topology.
Extend $V(\mu_0,\cdot)$ to the whole space $\mathfrak{M}_{f,s}(\mathbb{R}^d)$ by setting $V(\mu_0,\mu):=+\infty$ for every $\mu\in\mathfrak{M}_{f,s}(\mathbb{R}^d)\setminus\mathcal{P}(\mathbb{R}^d)$. This extension clearly preserves lower semicontinuity and convexity.
Recall that the topological dual $\mathfrak{M}_{f,s}(\mathbb{R}^d)^*$ can be identified with $C_b(\mathbb{R}^d)$ via the natural pairing (see~\cite[Lemma~3.2.3]{DeuschelStroock1989}). Define the Legendre transform $g:C_b(\mathbb{R}^d)\to(-\infty,\infty]$ of the extended functional by
\[
g(\lambda_1):=\sup_{\mu\in\mathfrak{M}_{f,s}(\mathbb{R}^d)}\Bigl\{\int_{\mathbb{R}^d}\lambda_1(x)\,\mu(\mathrm{d}x)-V(\mu_0,\mu)\Bigr\},\quad \lambda_1\in C_b(\mathbb{R}^d).
\]
Applying the classical convex duality theorem (see~\cite[Theorem~2.2.15, p.~55]{DeuschelStroock1989}) yields the bidual representation
\begin{equation}
V(\mu_0,\mu_1)=\sup_{\lambda_1\in C_b(\mathbb{R}^d)}\Bigl\{\int_{\mathbb{R}^d}\lambda_1(x)\,\mu_1(\mathrm{d}x)-g(\lambda_1)\Bigr\},\quad \mu_1\in\mathcal{P}(\mathbb{R}^d).
\end{equation}
To identify $g$, we evaluate $g(-\lambda_1)$:
\begin{align*}
g(-\lambda_1)
&=\sup_{\mu\in\mathfrak{M}_{f,s}(\mathbb{R}^d)}\Bigl\{\int_{\mathbb{R}^d}-\lambda_1(x)\,\mu(\mathrm{d}x)-V(\mu_0,\mu)\Bigr\}\\
&=\sup_{\mu\in\mathcal{P}(\mathbb{R}^d)}\Bigl\{\int_{\mathbb{R}^d}-\lambda_1(x)\,\mu(\mathrm{d}x)-V(\mu_0,\mu)\Bigr\}\\
&=-\inf_{\mu\in\mathcal{P}(\mathbb{R}^d)}\Bigl\{\int_{\mathbb{R}^d}\lambda_1(x)\,\mu(\mathrm{d}x)+V(\mu_0,\mu)\Bigr\}.
\end{align*}
Using the definition of \eqref{SOT} and \eqref{intRc}, the infimum on the right‑hand side can be rewritten as
\begin{align*}
\inf_{\mu\in\mathcal{P}(\mathbb{R}^d)}\Bigl\{\int_{\mathbb{R}^d}\lambda_1(x)\,\mu(\mathrm{d}x)+V(\mu_0,\mu)\Bigr\}
&=\inf_{\mu\in\mathcal{P}(\mathbb{R}^d)}\inf_{\mathbb{P}\in\mathfrak{P}_{\Theta}(\mu_0,\mu)}\bigl\{\mathbb{E}^{\mathbb{P}}[\lambda_1(X_1)]+J(\mathbb{P})\bigr\}\\
&=\inf_{\mathbb{P}\in\mathfrak{P}_{\Theta}(\mu_0)}\bigl\{\mathbb{E}^{\mathbb{P}}[\lambda_1(X_1)]+J(\mathbb{P})\bigr\}\\
&=\int_{\mathbb{R}^d}\lambda_0^{\lambda_1}(x)\,\mu_0(\mathrm{d}x).
\end{align*}
Consequently, $g(-\lambda_1)=-\int_{\mathbb{R}^d}\lambda_0^{\lambda_1}(x)\,\mu_0(\mathrm{d}x)$. Hence we obtain
\begin{align*}
V(\mu_0,\mu_1)
&=\sup_{\lambda_1\in C_b(\mathbb{R}^d)}\Bigl\{\int_{\mathbb{R}^d}-\lambda_1(x)\,\mu_1(\mathrm{d}x)-g(-\lambda_1) \Bigr\}\\
&=\sup_{\lambda_1\in C_b(\mathbb{R}^d)}\Bigl\{-\int_{\mathbb{R}^d}\lambda_1(x)\,\mu_1(\mathrm{d}x)+\int_{\mathbb{R}^d}\lambda_0^{\lambda_1}(x)\,\mu_0(\mathrm{d}x) \Bigr\}\\
&=\mathcal{V}(\mu_0,\mu_1),
\end{align*}
which completes the proof.
\end{proof}

For continuous semimartingales ($F\equiv 0$), the boundedness condition on $\Theta$ can be dropped by imposing coercivity on $L$:
\begin{ass}\label{uL}
$L$ is coercive in the sense that
\begin{equation}
\lim_{\max\{|b|,|c|\}\to+\infty}\inf_{(t,\omega)}\frac{L(t,\omega,b,c,0)}{|b|+|c|}=+\infty.
\end{equation}
\end{ass}
Under this coercivity condition, we have the following counterpart of Theorem~\ref{Vsemicontinuous} and~\ref{D} for continuous semimartingales.
\begin{prop}\label{F=0SOT}
Let $\Theta\subseteq\mathbb{R}^d\times\mathbb{S}_+^d\times\{0\}$ be closed and convex,
and let $L$ satisfy Assumptions~\ref{L} and~\ref{uL}. Then $V$ is lower semicontinuous
and convex, and the conclusions of Theorem~\ref{D} hold.
\end{prop}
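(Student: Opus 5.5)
The plan is to reduce Proposition~\ref{F=0SOT} to the structure of Theorems~\ref{Vsemicontinuous} and~\ref{D}. The only missing ingredient there is compactness of $\mathfrak{P}_{\Theta}(\Gamma_0,\Gamma_1)$, which Remark~\ref{Thetacompact} supplied through Assumption~\ref{B}. Here compactness must come instead from the coercivity in Assumption~\ref{uL}, applied to sublevel sets of $J$.

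\textbf{Step 1 (tightness of sublevel sets).} Fix compact sets $\Gamma_0,\Gamma_1\subseteq\mathcal{P}(\mathbb{R}^d)$ and $K<\infty$. I will show that $\{\mathbb{P}\in\mathfrak{P}_{\Theta}(\Gamma_0,\Gamma_1): J(\mathbb{P})\le K\}$ is relatively compact.

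By Assumption~\ref{uL}, for every $R>0$ there is $N_R$ with $|b|+|c|\le N_R + L(t,\omega,b,c,0)/R$. Hence
\[
\mathbb{E}^{\mathbb{P}}\Big[\int_0^1(|b^{\mathbb{P}}_t|+|c^{\mathbb{P}}_t|)\,\mathrm{d}t\Big]\le N_1+K,
\]
and, more importantly, the family $\{|b^{\mathbb{P}}|+|c^{\mathbb{P}}|\}$ is uniformly integrable on $\Omega\times[0,1]$ with respect to $\mathbb{P}\times\mathrm{d}t$, uniformly in $\mathbb{P}$. Uniform integrability controls the modulus $\sup_{|s-t|\le\delta}\int_s^t(|b_r|+|c_r|)\,\mathrm{d}r$ in probability. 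The standard Aldous / Rebolledo tightness criterion for continuous semimartingales (as in Tan--Touzi \cite{TanTouzi2013AOP}) then gives tightness of the laws of $(X,B,C)$, with continuous limit paths. Tightness of $X_0$ is supplied by $\Gamma_0$.

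\textbf{Step 2 (closedness and lower semicontinuity).} Take $\mathbb{P}^n\to\mathbb{P}$ with $J(\mathbb{P}^n)\le K$. Passing to the joint laws of $(X,B^n,C^n)$, the limit $(X,B,C)$ has $X-B$ a local martingale with quadratic variation $C$. This follows from the martingale problem, using uniform integrability to pass to the limit in the martingale test functions, after localising since $b$ is unbounded.

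Absolute continuity of $B$ and $C$ in the limit follows from the uniform integrability in Step 1, by a de la Vallée-Poussin argument. The constraint that the limit derivatives lie in the closed convex set $\Theta$, and the bound $J(\mathbb{P})\le\liminf J(\mathbb{P}^n)$, both come from the same ``a.s.\ weak $L^1$'' lower semicontinuity argument used to prove Theorem~\ref{Vsemicontinuous}. That argument needs only convexity and lower semicontinuity of $L$ together with weak $L^1$ convergence of the derivatives, which uniform integrability supplies via Dunford--Pettis. I expect this step to be the main obstacle. The difficulty is that without Assumption~\ref{B} the characteristics are unbounded, so the identification of the limit characteristics must be done with the extra variables $(B,C)$ carried along in the enlarged space of Section~\ref{3}, not on $\Omega$ alone. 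I would first replicate the enlarged-space linearisation and check that every use of boundedness there can be replaced by uniform integrability.

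\textbf{Step 3 (conclusion).} With sublevel compactness in hand, the lower semicontinuity and convexity of $V$ follow exactly as in Theorem~\ref{Vsemicontinuous}.

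For lower semicontinuity, take $\mu_i^n\to\mu_i$ with bounded values $V(\mu_0^n,\mu_1^n)$. Choose near-minimisers $\mathbb{P}^n$; they have bounded cost and marginals in compact sets, so Steps 1--2 extract a limit point in $\mathfrak{P}_{\Theta}(\mu_0,\mu_1)$ with smaller cost.

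Convexity comes from mixing plans. The mixture $\alpha\mathbb{P}^1+(1-\alpha)\mathbb{P}^2$ has characteristics given by the Radon--Nikodym-weighted convex combination, so they stay in $\Theta$. Convexity of $L$ in $\theta$, via Jensen, gives $J(\text{mixture})\le\alpha J(\mathbb{P}^1)+(1-\alpha)J(\mathbb{P}^2)$.

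Existence of a minimiser when $V<\infty$ is Step 2 applied to a minimising sequence. Duality then follows verbatim from the convex-analysis argument in the proof of Theorem~\ref{D}. That argument uses only lower semicontinuity and convexity of $V(\mu_0,\cdot)$ and identity \eqref{intRc}. The identity \eqref{intRc} relies on the measurable-selection result \cite[Lemma~5.4]{LiuNeufeld2019TAMS}, which requires no compactness.
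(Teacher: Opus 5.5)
Your proposal follows essentially the same route as the paper. The uniform-in-$(t,\omega)$ coercivity of Assumption~\ref{uL} gives uniform integrability of $(b^{\mathbb{P}},c^{\mathbb{P}})$ on sublevel sets of $J$. This takes over the role of Assumption~\ref{B} both for tightness and for the Dunford--Pettis/Nagumo step in Balder's a.s.\ weak $L^1$ lower semicontinuity theorem; convexity then comes from mixing, and duality is verbatim from Theorem~\ref{D}. The only difference is that the paper outsources lower semicontinuity and convexity to Lemma~8.4 of Backhoff-Veraguas and Pammer (with Balder's theorem in place of Ioffe's), whereas you reconstruct that argument directly on the enlarged space of Section~\ref{3}.
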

\begin{rem}
The proof of lower semicontinuity and convexity follows from Lemma~8.4 of \cite{BVPammer2022Bernoulli}, replacing the Ioffe theorem~\cite[Theorem 1]{Ioffe19771} ---
which, in our view, is implicitly employed there --- with Balder's theorem (\cite[Theorem~4.9]{Balder1986}; see also \cite[Theorem~3.2]{Balder1984}). The proof of the remaining conclusions of Proposition~\ref{F=0SOT} is then identical to that of Theorem~\ref{D} itself.
\end{rem}

\section{Lower semicontinuity and convexity}\label{3}
This section is devoted to the proof of Theorem~\ref{Vsemicontinuous}. The main difficulty is that the integral functional $J(\mathbb{P})$ defined in \eqref{JP} depends nonlinearly on the semimartingale characteristics of $X$, which makes a direct application of standard convex analytic tools delicate. To overcome this, we lift the original problem to an enlarged space on which the relevant characteristics appear as pathwise derivatives, thereby linearising the dependence of the integral functional on the underlying probability measure.  Once this linearisation is in place, the lower semicontinuity and convexity of the (extended) integral functional become amenable to the general theory of integral functionals due to Balder~\cite{Balder1986}. The main steps of this section are as follows. In Subsection~\ref{enlarged} we construct the enlarged space, establish the precise correspondence between the original and the enlarged problems. In Subsection~\ref{proof_Vsemicontinuous} we prove the key lower semicontinuity result for the extended integral functional and then combine it with the correspondence lemma to conclude Theorem~\ref{Vsemicontinuous}.

\subsection{An enlarged space}\label{enlarged}
In this subsection, we introduce an enlarged space.

Let $\mathcal{C}^+(\mathbb{R}^d):=\{g_i \mid i \in \mathbb{N}\}$ be a countable family of bounded continuous functions on $\mathbb{R}^d$ vanishing in a neighbourhood of the origin with the following properties:
\begin{itemize}
\item For each $i \in \mathbb{N}$, let $g_{2i-1}:=((q_i\cdot|x|-1)\vee 0)\wedge 1$, where $q_i$ is the $i$-th positive rational number in Cantor's diagonal enumeration.
\item For each $i \in \mathbb{N}$, let $g_{2i}$ be a continuous function on $\mathbb{R}^d$  satisfying for  all $x \in \mathbb{R}^d$ that $0\leq g_{2i}(x)\leq |x|^2 \wedge 1 $ and that
\[
g_{2i}(x) =
\begin{cases}
0,       & \quad \text{if } |x| \leq \frac{1}{2i},\\
|x|^2 \wedge 1,  & \quad \text{if } |x| > \frac{1}{i}.
\end{cases}
\]
\item It is a law-determining class for L\'evy measures, i.e., for any two L\'evy measures $F, F^\prime \in \mathcal{L}$, if $\int_{\mathbb{R}^d} g_i(z) \, F(\mathrm{d}z) = \int_{\mathbb{R}^d} g_i(z)\,F^\prime(\mathrm{d}z)$ for all $i\in \mathbb{N}$, then $F = F^\prime$.
\item It is a convergence-determining class for the weak convergence induced by the bounded continuous functions on $\mathbb{R}^d$ vanishing in a neighbourhood of the origin, i.e., given any sequence $(F_n)_{n \in \mathbb{N}} \subseteq \mathcal{L}$ and $F \in \mathcal{L}$, if $\lim_{n \to \infty} \int_{\mathbb{R}^d} g_i(z)\,F_n(\mathrm{d}z) = \int_{\mathbb{R}^d} g_i(z)\,F(\mathrm{d}z)$ for all $i\in \mathbb{N}$, then $\lim_{n \to \infty} \int_{\mathbb{R}^d} g(z)\,F_n(\mathrm{d}z) = \int_{\mathbb{R}^d} g(z)\,F(\mathrm{d}z)$ for all bounded continuous functions on $\mathbb{R}^d$ vanishing in a neighbourhood of the origin.
\end{itemize}
We refer to~\cite[II.2.20]{JacodShiryaev2003} together with~\cite[VII.2.7 \& 2.8]{JacodShiryaev2003} for the existence of such a class $\mathcal{C}^+(\mathbb{R}^d)$.

For each $n\in\mathbb{N}$, write $\Omega_n=\mathbb{D}([0,1],\mathbb{R}^n)$ endowed with the Skorokhod $J_1$-topology and $\Omega_n^c=\mathbb{C}([0,1],\mathbb{R}^n)$ endowed with the uniform convergence topology (which coincides with the subspace topology of $\Omega_n$). 
We then define the enlarged space
\begin{equation*}
\overline{\Omega} := \Omega_d \times \Omega^c_d \times \Omega^c_d \times \Omega_d \times \Omega_d \times \Omega^c_{d^2}\times \{\Omega^c_1\}^\mathbb{N},
\end{equation*}
equipped with the product topology; this is a Polish space. 
We write
\begin{equation*}
\overline{\mathbb{X}}:=\big( \overline{X},\overline{B},\overline{M}^c,\overline{M}^d,\overline{J}, \overline{C}, \{\overline{V}^i\}_{i\in \mathbb{N}}\big)
\end{equation*}
for the canonical process $\overline{\mathbb{X}}_t(\overline{\omega})=\overline{\omega}(t)$, $\overline{\omega} \in \overline{\Omega}$. We endow $\overline{\Omega}$ with its Borel $\sigma$-field $\overline{\mathcal{F}}$ and denote by $\overline{\mathbb{F}} =(\overline{\mathbb{F}}_t)_{0\leq t \leq 1}$ the (raw) filtration generated by $\overline{\mathbb{X}}$.

Consider a probability measure $\overline{\mathbb{P}}$ on $(\overline{\Omega},\overline{\mathcal{F}})$ under which $\overline{X}$ is a semimartingale with absolutely continuous characteristics, and $\overline{X}$ admits the canonical representation
\[
\overline{X}=\overline{X}_0+\overline{B}+\overline{M}^c+\overline{M}^d+\overline{J}, \quad \overline{\mathbb{P}}\text{-a.s.},
\]
satisfying the following:
\begin{itemize}
  \item $\overline{M}^c_0 = \overline{M}^d_0 = \overline{B}_0 = 0$ $\overline{\mathbb{P}}$-a.s.
  \item $\overline{B}$ has $\overline{\mathbb{P}}$-a.s.\ paths of finite variation.
  \item $\overline{M}^c$ is a continuous $\overline{\mathbb{P}}$-$\overline{\mathbb{F}}$-local martingale with quadratic covariation $\overline{C}$.
  \item $\overline{M}^d$ is a purely discontinuous $\overline{\mathbb{P}}$-$\overline{\mathbb{F}}$-local martingale.
  \item $\overline{J}= \sum_{0\leq s \leq \cdot} \bigl[\Delta \overline{X}_s - h(\Delta \overline{X}_s)\bigr]$ $\overline{\mathbb{P}}$-a.s.
  \item $\overline{\mathbb{P}}$-a.s., each $\overline{V}^i$ coincides with $\int_0^{\cdot}\int_{\mathbb{R}^d} g_i(x)\, \nu^{\overline{\mathbb{P}}}(\mathrm{d}x,\mathrm{d}t)$, 
  where $\nu^{\overline{\mathbb{P}}}(\mathrm{d}x,\mathrm{d}t)$ denotes the $\overline{\mathbb{P}}$-$\overline{\mathbb{F}}$-compensator of the measure $\mu^{\overline{X}}(\mathrm{d}x,\mathrm{d}t)$ associated to the jumps of $\overline{X}$, and $g_i$ denotes the $i$-th element of $\mathcal{C}^+(\mathbb{R}^d)$.
\end{itemize}
We denote by $\overline{\mathfrak{P}}_{sem}^{ac}$ the set of all such probability measures.

Given a set $\Theta \subseteq \mathbb{R}^d \times \mathbb{S}_+^d \times \mathcal{L}$ and two probability measures $\mu_0, \mu_1 \in \mathcal{P}(\mathbb{R}^d)$, we define, consistently with the definitions on the original space, the following sets:
\begin{align*}
\overline{\mathfrak{P}}_{\Theta}      
&:=\bigl\{\overline{\mathbb{P}}\in\overline{\mathfrak{P}}_{sem}^{ac}\mid (\overline{b}^{\overline{\mathbb{P}}}, \overline{c}^{\overline{\mathbb{P}}}, \overline{F}^{\overline{\mathbb{P}}})\in\Theta\quad\overline{\mathbb{P}}\times\mathrm{d}t\text{-a.s.}\bigr\},\\
\overline{\mathfrak{P}}_{\Theta}(\mu_0)&:=\bigl\{\overline{\mathbb{P}} \in \overline{\mathfrak{P}}_{\Theta} \mid \overline{\mathbb{P}} \circ \overline{X}_0^{-1}=\mu_0\bigr\},\\
\overline{\mathfrak{P}}_{\Theta}(\mu_0,\mu_1)&:=\bigl\{\overline{\mathbb{P}}\in\overline{\mathfrak{P}}_{\Theta}(\mu_0)\mid\overline{\mathbb{P}}\circ\overline{X}_1^{-1}=\mu_1\bigr\}.
\end{align*}

To introduce a corresponding cost function on the extended space, let $\{g_i \mid i \in \mathbb{N}\} = \mathcal{C}^+(\mathbb{R}^d)$ and define an additive, positively homogeneous mapping $\varphi : \mathcal{L} \rightarrow \mathbb{R}^{\mathbb{N}}$ by
\begin{equation}\label{eq:map}
\varphi(F) := \Bigl(\int_{\mathbb{R}^d} g_i(z)\, F(\mathrm{d}z)\Bigr)_{i \in \mathbb{N}}.
\end{equation}
Since $\mathcal{C}^+(\mathbb{R}^d)$ is a law-determining class on $\mathcal{L}$, $\varphi$ is injective.
Next, define $\overline{\varphi}: \mathbb{R}^d\times\mathbb{S}_+^d \times \mathcal{L} \to \mathbb{R}^d\times\mathbb{S}_+^d \times\mathbb{R}^{\mathbb{N}}$ by
\[
\overline{\varphi}(b,c,F) := (b,c, \varphi(F)).
\]
Note that $\mathbb{R}^{\mathbb{N}}$ is a Polish space with metric $d(x,y) = \sum_{i=1}^{\infty} \frac{1}{2^i}\,\frac{|x_i - y_i|}{1 + |x_i - y_i|}$.
Clearly, $\overline{\varphi}$ is additive, positively homogeneous, and a bijection onto its image. In the sequel we will mainly use $\overline{\varphi}$ via the following pathwise derivatives. Define the processes
\begin{equation}\label{bcv}
\begin{aligned}
\overline{b}_t&:=\limsup_{n\to \infty}n\bigl(\overline{B}_t-\overline{B}_{(t-\frac{1}{n})\vee 0}\bigr),\\
\overline{c}_t&:=\limsup_{n\to \infty}n\bigl(\overline{C}_t-\overline{C}_{(t-\frac{1}{n})\vee 0}\bigr),\\
\overline{v}^i_t&:=\limsup_{n \to \infty} n \bigl(\overline{V}^i_t -\overline{V}^i_{(t-\frac{1}{n})\vee0}\bigr), \quad t\in [0,1],
\end{aligned}
\end{equation}
for each $i \in \mathbb{N}$.
Set $\overline{v}_t := \{\overline{v}^i_t\}_{i\in\mathbb{N}}$. Whenever $\overline{\mathbb{P}} \in \overline{\mathfrak{P}}_{sem}^{ac}$, we have
\begin{equation}\label{bcF=bcv}
\Bigl(\overline{b}_t^{\overline{\mathbb{P}}}, \overline{c}_t^{\overline{\mathbb{P}}}, \bigl( \int_{\mathbb{R}^d} g_i(z)\,\overline{F}_t^{\overline{\mathbb{P}}}(\mathrm{d}z) \bigr)_{i \in \mathbb{N}} \Bigr)
= \bigl( \overline{b}_t, \overline{c}_t, \overline{v}_t \bigr) \qquad \overline{\mathbb{P}}\times\mathrm{d}t\text{-a.s.}
\end{equation}
Consequently, for any $\Theta \subseteq\mathbb{R}^d\times \mathbb{S}_+^d\times \mathcal{L}$ and $\overline{\mathbb{P}}\in \overline{\mathfrak{P}}_{sem}^{ac}$,
\begin{equation}
(\overline{b}^{\overline{\mathbb{P}}}, \overline{c}^{\overline{\mathbb{P}}}, \overline{F}^{\overline{\mathbb{P}}}) \in \Theta \;\; \overline{\mathbb{P}}\times\mathrm{d}t\text{-a.s.}
\;\Longleftrightarrow\;
(\overline{b}, \overline{c}, \overline{v}) \in \overline{\varphi}(\Theta) \;\; \overline{\mathbb{P}}\times\mathrm{d}t\text{-a.s.}
\end{equation}

Finally, we associate to $\overline{\mathbb{P}}$ a transportation cost. Given a cost function $L$, define $\overline{L}:[0,1]\times\Omega\times\overline{\varphi}(\Theta)\to\mathbb{R}$ by
\begin{equation}\label{deoverl}
\overline{L}\bigl(t, \omega, \overline{\varphi}(b,c,F)\bigr) := L(t, \omega, b, c, F),
\end{equation}
and set
\begin{equation}
\overline{J}(\overline{\mathbb{P}})
= \mathbb{E}^{\overline{\mathbb{P}}}\!
\Bigl[\int_{0}^{1}\overline{L}\bigl(t,\overline{X},\overline{\varphi}
(\overline{b}_t^{\overline{\mathbb{P}}},\overline{c}_t^{\overline{\mathbb{P}}},\overline{F}_t^{\overline{\mathbb{P}}})\bigr)\,\mathrm{d}t\Bigr]
= \mathbb{E}^{\overline{\mathbb{P}}}\!\Bigl[\int_{0}^{1}\overline{L}\bigl(t, \overline{X}, \overline{b}_t,\overline{c}_t,\overline{v}_t\bigr)\,\mathrm{d}t\Bigr].
\end{equation}

We can then define the map $\Psi^{\mathbb{P}}:\Omega\to\overline{\Omega}$ by
\begin{equation}\label{Psi}
\omega\mapsto\bigl(X(\omega), B^{\mathbb{F}^{\mathbb{P}}_{+}}(\omega), M^{c,\mathbb{F}^{\mathbb{P}}_{+}}(\omega), M^{d,\mathbb{F}^{\mathbb{P}}_{+}}(\omega), J^{\mathbb{F}^{\mathbb{P}}_{+}}(\omega), C^{\mathbb{F}^{\mathbb{P}}_{+}}(\omega), \bigl\{V^{i,\mathbb{F}_{+}^{\mathbb{P}}}(\omega)\bigr\}_{i\in\mathbb{N}}\bigr),
\end{equation}
which is measurable with respect to the Borel $\sigma$-field completed by $\mathbb{P}$. The canonical representation of $X$ with respect to $\mathbb{F}^{\mathbb{P}}_+$ guarantees that for every $\omega$ each summand has c\`{a}dl\`{a}g paths or continuous paths, respectively, and not merely $\mathbb{P}$-almost surely, so that $\Psi^{\mathbb{P}}$ is well defined. Since the characteristics of $X$ do not depend on the choice of the filtration, we have that $\mathbb{P} \in \mathfrak{P}_{\Theta}$ implies $\overline{\mathbb{P}} := \mathbb{P} \circ (\Psi^{\mathbb{P}})^{-1} \in \overline{\mathfrak{P}}_{\Theta}$; i.e., $\overline{\mathbb{P}}$ preserves the structure of $\mathbb{P}$.

Under the given transportation cost, there exists a one-to-one correspondence between the original space and the extended space; see~\cite[Corollary~3.2, Proposition~3.16 and Lemma~5.2]{LiuNeufeld2019TAMS} and~\cite[Corollary~III.2.8]{JacodShiryaev2003}.

\begin{lem}\label{V=Vbar}
Let $\Theta \subseteq \mathbb{R}^d \times \mathbb{S}_+^d \times \mathcal{L}$ be closed, convex and satisfy Assumptions~\ref{B} and~\ref{J}, and let $\Gamma_0,\,\Gamma_1\subseteq\mathcal{P}(\mathbb{R}^d)$ be compact. Moreover, let the cost function $L$ satisfy Assumption~\ref{L}. Then the following statements hold true:
\begin{itemize}
\item[(1)] The set $\overline{\mathfrak{P}}_{\Theta}(\Gamma_0, \Gamma_1):=\{\overline{\mathbb{P}}\in\overline{\mathfrak{P}}_{\Theta}\mid \overline{\mathbb{P}}\circ \overline{X}_{0}^{-1}\in\Gamma_0,\,\overline{\mathbb{P}}\circ \overline{X}_{1}^{-1}\in\Gamma_1\}$ is a compact convex set under the weak convergence topology.
\item[(2)] For any $\mu_0,\mu_1\in\mathcal{P}(\mathbb{R}^d)$ and any $\mathbb{P}\in{\mathfrak{P}}_{\Theta}(\mu_0, \mu_1)$, we can construct $\overline{\mathbb{P}}:=
\mathbb{P}\circ(\Psi^{\mathbb{P}})^{-1}\in\overline{\mathfrak{P}}_{\Theta}(\mu_0,\mu_1)$ such that $J(\mathbb{P})=\overline{J}(\overline{\mathbb{P}})$.
\item[(3)] Conversely, for any $\mu_0,\mu_1\in\mathcal{P}(\mathbb{R}^d)$ and any $\overline{\mathbb{P}}\in\overline{\mathfrak{P}}_{\Theta}(\mu_0, \mu_1)$, we can construct $\mathbb{P}:=
\overline{\mathbb{P}}\circ\overline{X}^{-1}\in{\mathfrak{P}}_{\Theta}(\mu_0, \mu_1)$ such that $J(\mathbb{P})\le \overline{J}(\overline{\mathbb{P}})$.
\end{itemize}
\end{lem}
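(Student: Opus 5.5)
The plan is to reduce everything to the results already cited from \cite{LiuNeufeld2019TAMS} and \cite{JacodShiryaev2003}, and to check by hand only the cost identities. Parts (2) and (3) transport measures in opposite directions through $\Psi^{\mathbb{P}}$ and the projection $\overline{X}$; part (1) is a lifted Arzel\`a--Ascoli statement.

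\textbf{Part (1).} The first step is to check that $\overline{\mathfrak{P}}_{\Theta}(\Gamma_0,\Gamma_1)$ is tight. The marginal of $\overline{X}$ is tight by Remark~\ref{Thetacompact}.
\begin{itemize}
\item The components $\overline{B}$, $\overline{C}$, $\overline{V}^i$ are uniformly Lipschitz. This follows from Assumption~\ref{B}, since the differential characteristics lie in the bounded set $\Theta$ and $g_i\le |x|^2\wedge 1$ or $g_i\le 1$ with $F(\{|x|>\epsilon\})$ bounded.
\item The components $\overline{M}^c$, $\overline{M}^d$, $\overline{J}$ are controlled via Aldous' criterion. Their characteristics are explicit functions of $(b,c,F)$ and are bounded uniformly over $\Theta$; Assumption~\ref{J} gives the uniform control of small jumps.
\end{itemize}

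The second step is closedness. Take $\overline{\mathbb{P}}_n\to\overline{\mathbb{P}}$ weakly. The structural identities are preserved in the limit:
\begin{itemize}
\item $\overline{X}=\overline{X}_0+\overline{B}+\overline{M}^c+\overline{M}^d+\overline{J}$ survives the limit, since $\overline{X}$ is a.s.\ continuous at fixed times outside a countable set.
\item The martingale properties of $\overline{M}^c$, $(\overline{M}^c)^2-\overline{C}$, and of $\sum g_i(\Delta\overline{X})-\overline{V}^i$ pass to the limit by uniform integrability and continuity of the relevant functionals at a.e.\ time.
\end{itemize}
Membership of $(\overline{b},\overline{c},\overline{v})$ in $\overline{\varphi}(\Theta)$ is then obtained as follows. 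Each difference quotient $n^{-1}$-increment lies in the closed convex hull of $\overline{\varphi}(\Theta)$. This is closed, convex, and (by compactness of $\Theta$ and continuity of $\varphi$) equal to $\overline{\varphi}(\Theta)$. The conclusion passes to the limit by Mazur/Banach--Saks applied in $\overline{\mathbb{P}}\times\mathrm{d}t$.

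Convexity of the set is then immediate, because all defining conditions are linear in $\overline{\mathbb{P}}$: martingale properties, a.s.\ identities, and a.s.\ membership in a fixed set. I will simply invoke \cite[Proposition~3.16]{LiuNeufeld2019TAMS} for the detailed verification, since the enlarged space there is the same.

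\textbf{Part (2).} Set $\overline{\mathbb{P}}=\mathbb{P}\circ(\Psi^{\mathbb{P}})^{-1}$. The law of $\overline{X}$ under $\overline{\mathbb{P}}$ is $\mathbb{P}$, so the marginals are $\mu_0,\mu_1$. Membership $\overline{\mathbb{P}}\in\overline{\mathfrak{P}}_\Theta$ was noted before the lemma. The one point that needs an argument is the filtration: the filtration generated by $\overline{\mathbb{X}}$ is, after pulling back via $\Psi^{\mathbb{P}}$, contained in $\mathbb{F}^{\mathbb{P}}_+$, and the relevant processes are adapted to it. Hence local martingale properties descend to the smaller filtration, as in \cite[Corollary~3.2]{LiuNeufeld2019TAMS}. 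Pathwise, $\overline{b}_t\circ\Psi^{\mathbb{P}}=b^{\mathbb{P}}_t$ $\mathbb{P}\times\mathrm{d}t$-a.e. by the Lebesgue differentiation theorem, and similarly for $c$ and $v$. By~\eqref{bcF=bcv} and the definition~\eqref{deoverl}, the integrands then agree a.e., so $J(\mathbb{P})=\overline{J}(\overline{\mathbb{P}})$.

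\textbf{Part (3).} Let $\mathbb{P}=\overline{\mathbb{P}}\circ\overline{X}^{-1}$. Then $X$ is a semimartingale in its own filtration, and its characteristics are the optional projections onto $\mathcal{F}^{\overline{X}}$ of those under $\overline{\mathbb{F}}$. Concretely, $b^{\mathbb{P}}_t=\mathbb{E}[\overline{b}_t\mid\mathcal{F}^{\overline X}_t]$ and likewise for $c$ and $\overline{v}$, by \cite[Lemma~5.2]{LiuNeufeld2019TAMS} together with \cite[Corollary~III.2.8]{JacodShiryaev2003}. Since $\varphi$ is linear and injective and $\Theta$ is closed and convex (hence its image is closed convex), the conditional expectation stays in $\overline{\varphi}(\Theta)$. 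Thus $\mathbb{P}\in\mathfrak{P}_\Theta(\mu_0,\mu_1)$. The inequality $J(\mathbb{P})\le \overline{J}(\overline{\mathbb{P}})$ follows from conditional Jensen, using convexity and lower semicontinuity of $\overline{L}$ in $\theta$ (Assumption~\ref{L}) and the fact that $\overline{L}(t,\overline{X},\cdot)$ depends on the path only through $\overline{X}$.

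\textbf{Main obstacle.} The hard step is the rigorous use of Jensen for a conditional expectation valued in the non-Euclidean space $\overline{\varphi}(\Theta)\subset\mathbb{R}^d\times\mathbb{S}^d_+\times\mathbb{R}^{\mathbb{N}}$. There is also a subtlety: the conditioning is at each $t$ but must hold $\mathrm{d}t$-a.e., jointly measurably. I would handle this as follows:
\begin{itemize}
\item Represent the convex l.s.c.\ function $\overline{L}(t,\omega,\cdot)$, extended by $+\infty$ off the compact convex set, as a countable supremum of continuous affine minorants. This is possible because the set is compact, metrisable and convex.
\item Apply the linear projection to each affine minorant.
\item Take the supremum, with measurability handled by a measurable selection of the affine functions in $(t,\omega)$.
\end{itemize}
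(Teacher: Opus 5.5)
The gap is in the Jensen step of part (3). Parts (1) and (2) follow the paper, which gives no argument of its own and simply defers to Liu--Neufeld and Jacod--Shiryaev.

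Write $\overline\theta_t=(\overline b_t,\overline c_t,\overline v_t)$. You want $\mathbb{E}^{\overline{\mathbb P}}[\overline L(t,\overline X,\overline\theta_t)]\ge\mathbb{E}^{\overline{\mathbb P}}[\overline L(t,\overline X,\mathbb{E}[\overline\theta_t\mid\mathcal F^{\overline X}_t])]$. For this, the random convex function $\overline L(t,\overline X,\cdot)$ must be $\mathcal F^{\overline X}_t$-measurable; equivalently, the coefficients of your countable affine minorants must be. In other words, $L(t,\omega,\theta)$ must depend on $\omega$ only through $(\omega_s)_{s\le t}$. Knowing that $L$ depends on the path only through $\overline X$ lets you condition on $\mathcal F^{\overline X}_1$ at best. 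But $\mathbb{E}[\overline\theta_t\mid\mathcal F^{\overline X}_1]$ is not the characteristic of $\mathbb P$.

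Assumption~\ref{L} imposes no such non-anticipativity, and without it (3) is false. Take:
\begin{itemize}
\item $d=1$, $\Theta=[-1,1]\times\{1\}\times\{0\}$, and $L(t,\omega,b,c,F)=1-b\tanh(\omega_1-\omega_0)$, which is nonnegative, $J_1$-continuous and affine in $\theta$;
\item $\overline{\mathbb P}$ the law of $(\overline X,\overline B,\overline M^c,\overline C)=(\xi t+W_t,\,\xi t,\,W_t,\,t)$, all other components zero, where $\xi$ is a fair random sign independent of a Brownian motion $W$.
\end{itemize}
Since $\overline{\mathcal F}_s\subseteq\sigma(\xi,W_u:u\le s)$, $W$ stays an $\overline{\mathbb F}$-Brownian motion. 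Hence $\overline{\mathbb P}\in\overline{\mathfrak P}_\Theta(\delta_0,\mu_1)$ with $\overline b\equiv\xi$, where $\mu_1$ is the law of $\xi+W_1$. Under $\mathbb P=\overline{\mathbb P}\circ\overline X^{-1}$ the drift is the filter $b^{\mathbb P}_t=m_t:=\tanh(X_t)=\mathbb{E}[\xi\mid\mathcal F^X_t]$. Therefore $\overline J(\overline{\mathbb P})=1-\mathbb{E}[\xi m_1]=1-\mathbb{E}[m_1^2]$. On the other hand, $J(\mathbb P)=1-\int_0^1\mathbb{E}[m_tm_1]\,\mathrm dt=1-\int_0^1\mathbb{E}[m_t^2]\,\mathrm dt$. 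This is strictly larger, because $\mathbb{E}[m_1^2]-\mathbb{E}[m_t^2]=\mathbb{E}[(m_1-m_t)^2]>0$ for $t<1$.

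So you need, and the statement needs, the extra hypothesis that $L(t,\cdot,\theta)$ is $\mathcal F_t$-measurable. It holds for the Markovian costs $\ell(t,\omega_t,\theta)$ of Sections~4--5, and it is irrelevant when $L$ does not depend on $\theta$. Under this hypothesis your argument goes through: the characteristics of $\mathbb P$ are the optional projections, and the affine-minorant conditional Jensen works with $\mathcal F_t$-measurable, truncated coefficients.

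There are two smaller inaccuracies.
\begin{itemize}
\item In (1), the marginal constraints are not linear in $\overline{\mathbb P}$, so $\overline{\mathfrak P}_\Theta(\Gamma_0,\Gamma_1)$ is convex only when $\Gamma_0,\Gamma_1$ are convex. For example, convexity fails for $\Gamma_0=\Gamma_1=\{\delta_0,\delta_1\}$ when $(0,0,0)\in\Theta$: mix the two constant paths. This is harmless, since convexity is only used for singletons, and the non-convex case in the proof of Theorem~\ref{Vsemicontinuous} only needs compactness.
\item In (2), local martingales need not remain local martingales in a smaller filtration. The descent works because, by Assumption~\ref{B}, $M^c$, $M^d$ and the compensated sums $\sum_{s\le\cdot}g_i(\Delta X_s)-V^i$ have bounded predictable quadratic variation. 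They are therefore true martingales, to which the tower property applies.
\end{itemize}
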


\subsection{Proof of Theorem~\ref{Vsemicontinuous}}\label{proof_Vsemicontinuous}
One of the key techniques for proving Theorem~\ref{Vsemicontinuous} is the Skorokhod representation theorem. We recall it in the form given in Theorem~3.3 of \cite{LiuNeufeld2019TAMS}.
\begin{thm}[Skorokhod representation theorem]\label{Skorokhod}
Let $\{\overline{\mathbb{P}}^{n}\}_{n\in\mathbb{N}}\subseteq\overline{\mathfrak{P}}_{\Theta}$ be a sequence of probability measures on the enlarged space $\overline{\Omega}$ introduced in Section~2.
Assume that $\overline{\mathbb{P}}^{n}\to \overline{\mathbb{P}}^{0}$ weakly in $\mathcal{P}(\overline{\Omega})$.
Then there exist a probability space $([0,1],\mathcal{B}([0,1]),\lambda)$, where $\lambda$ is the Lebesgue measure, and $\overline{\Omega}$-valued random variables $\{z^{n}\}_{n\in\mathbb{N}_0}$ defined on this space such that
\begin{itemize}
\item[(1)] $\lambda\circ(z^{n})^{-1}=\overline{\mathbb{P}}^{n}$ for every $n\in\mathbb{N}_{0}$.
\item[(2)] $z^{n}\to z^{0}$ pointwise (i.e., in the product topology of $\overline{\Omega}$) $\lambda$-a.s.\ as $n\to\infty$.
\end{itemize}
\end{thm}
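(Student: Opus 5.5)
The membership $\overline{\mathbb{P}}^{n}\in\overline{\mathfrak{P}}_{\Theta}$ plays no role here. The statement is the classical Skorokhod representation theorem on a Polish space, in the version whose base space is the Lebesgue unit interval. I would therefore first check that $\overline{\Omega}$ is Polish, and then either cite the classical result or run the standard partition construction.

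\textbf{Step 1: $\overline{\Omega}$ is Polish.} The factors $\Omega_n=\mathbb{D}([0,1],\mathbb{R}^n)$ are Polish, since the $J_1$ topology is generated by Billingsley's complete separable metric $d^\circ$. The factors $\Omega^c_n$ are Polish under the uniform norm. A countable product of Polish spaces is Polish: if $d_k$ are complete metrics on the factors, then
\[
\overline{d}(\overline{\omega},\overline{\omega}'):=\sum_k 2^{-k}\bigl(d_k(\overline{\omega}_k,\overline{\omega}'_k)\wedge 1\bigr)
\]
is a complete separable metric inducing the product topology. So $\mathcal{P}(\overline{\Omega})$ with weak convergence fits the classical framework. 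At this point one could simply invoke Skorokhod's theorem for separable complete metric spaces (e.g.\ Dudley's or Fernique's form, which already uses $([0,1],\lambda)$ as base space). For completeness I would sketch the construction.

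\textbf{Step 2: nested partitions with $\overline{\mathbb{P}}^0$-null boundaries.} For each $k$, fix a countable dense set $\{x_j\}$. For each $j$, the map $r\mapsto\overline{\mathbb{P}}^0(\overline{d}(\cdot,x_j)=r)$ is nonzero for at most countably many $r$, so I can choose radii $r_j\in(2^{-k-1},2^{-k})$ with $\overline{\mathbb{P}}^0(\partial B(x_j,r_j))=0$. Disjointifying these balls gives a countable Borel partition $\{A^k_i\}_i$ of $\overline{\Omega}$ into sets of diameter $<2^{-k+1}$ with $\overline{\mathbb{P}}^0(\partial A^k_i)=0$. Intersecting with the previous levels makes the partitions nested, and boundaries remain null. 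By the portmanteau theorem, $\overline{\mathbb{P}}^n(A)\to\overline{\mathbb{P}}^0(A)$ for every cell $A$ of every level.

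\textbf{Step 3: the construction on $[0,1]$.} For each $n\in\mathbb{N}_0$, lay out consecutive subintervals $I^{n,k}_i\subseteq[0,1]$ of length $\overline{\mathbb{P}}^n(A^k_i)$. The ordering is lexicographic and the same for all $n$, and children are placed inside their parent's interval. Define $z^n(u)$ as the limit, over $k$, of points chosen in the cells $A^k_i$ with $u\in I^{n,k}_i$. Completeness of $\overline{d}$ and the shrinking diameters make this limit well defined. Because cells are nested and lengths equal cell masses, $\lambda\circ(z^n)^{-1}=\overline{\mathbb{P}}^n$; this is checked on cells of every level, which generate the Borel $\sigma$-field. (Alternatively, one can use Billingsley's construction on $[0,1]^2$ and transfer it to $[0,1]$ via a measure-preserving Borel isomorphism.)

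\textbf{Step 4: a.s.\ convergence (main obstacle).} The key is the convergence of interval endpoints. By Step 2, each endpoint of $I^{n,k}_i$ is a sum of cell masses, and I expect it to converge to the corresponding endpoint for $n=0$. There is one delicate point: these sums range over countably many cells, so convergence of each cell mass does not immediately give convergence of the sums. I would handle this by a Scheffé-type argument, using that total mass is one. Granting that, let $u$ lie in the interior of some $I^{0,k}_i$ for every $k$; this excludes only countably many endpoints, so it holds $\lambda$-a.s. Then for $n$ large, $u$ also lies in $I^{n,k}_i$. Hence $z^n(u)$ and $z^0(u)$ lie in the closure of the same cell of diameter $<2^{-k+1}$, so $\limsup_n\overline{d}(z^n(u),z^0(u))\le 2^{-k+1}$ for every $k$. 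This gives $z^n\to z^0$ $\lambda$-a.s.\ in the product topology.
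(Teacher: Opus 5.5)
Your proposal is correct and matches the paper. The paper gives no independent proof: it simply quotes the classical Skorokhod representation on the Polish space $\overline{\Omega}$ (via Liu--Neufeld, Theorem~3.3), and, as you note, membership in $\overline{\mathfrak{P}}_{\Theta}$ plays no role.

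Two small points about your optional sketch. First, you only know that $z^n(u)$ lies in the \emph{closure} of its cell, so $\lambda\circ(z^n)^{-1}=\overline{\mathbb{P}}^n$ cannot be checked on cells directly as you state; verify it instead with bounded uniformly continuous test functions. Second, the Scheff\'e step is unnecessary, because each level-$k$ endpoint is a finite sum of masses of cells of levels at most $k$ (the preceding siblings of the cell and of its ancestors).
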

In particular, if we write
\begin{align*}
z^{n} 
&= \bigl(z^{n,\overline{X}}, z^{n,\overline{B}}, z^{n,\overline{M}^{c}}, z^{n,\overline{M}^{d}}, z^{n,\overline{J}}, z^{n,\overline{C}}, (z^{n,\overline{V}^{i}})_{i\in\mathbb{N}}\bigr) \\
&= \bigl(\overline{X} \circ z^{n}, \overline{B} \circ z^{n}, \overline{M}^{c} \circ z^{n}, \overline{M}^{d} \circ z^{n}, \overline{J} \circ z^{n}, \overline{C} \circ z^{n}, (\overline{V}^{i} \circ z^{n})_{i\in\mathbb{N}}\bigr),\quad n\in\mathbb{N}_0,
\end{align*}
then each component converges $\lambda$-a.s.\ to the corresponding component of $z^{0}$ in its respective topological space.
Recall the definitions in \eqref{bcv}. For each $n\in\mathbb{N}_0$, we set
$z^{n,\overline{b}}=\overline{b}\circ z^n$, $z^{n,\overline{c}}=\overline{c}\circ z^n$,
$z^{n,\overline{v}^{i}}=\overline{v}^{i}\circ z^n$ ($i\in\mathbb{N}$), and $z^{n,\overline{v}}=\{z^{n,\overline{v}^{i}}\}_{i\in\mathbb{N}}$.
\begin{thm}\label{JPlowsemicontinuous}
Under the assumptions of Theorem~\ref{Vsemicontinuous}, the functional
\[
\overline{J}:\overline{\mathfrak{P}}_{\Theta}\to [0,+\infty],\qquad 
\overline{\mathbb{P}}\mapsto \mathbb{E}^{\overline{\mathbb{P}}}\Bigl[\int_{0}^{1}\overline{L}\bigl(t, \overline{X}, \overline{b}_t,\overline{c}_t,\overline{v}_t\bigr)\,\mathrm{d}t\Bigr],
\]
is convex and lower semicontinuous with respect to the weak convergence on $\overline{\mathfrak{P}}_{\Theta}$.
\end{thm}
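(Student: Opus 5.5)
Convexity comes first and is the easier half. On the enlarged space the integrand $\overline{L}(t,\overline{X},\overline{b}_t,\overline{c}_t,\overline{v}_t)$ is a fixed measurable functional of the path $\overline{\omega}$. The processes $\overline{b},\overline{c},\overline{v}$ are defined pathwise in \eqref{bcv} and do not depend on $\overline{\mathbb{P}}$. Hence $\overline{J}(\overline{\mathbb{P}})=\int \Phi\,\mathrm{d}\overline{\mathbb{P}}$, with
\[
\Phi(\overline{\omega}):=\int_0^1\overline{L}\bigl(t,\overline{X}(\overline{\omega}),\overline{b}_t(\overline{\omega}),\overline{c}_t(\overline{\omega}),\overline{v}_t(\overline{\omega})\bigr)\,\mathrm{d}t\in[0,+\infty],
\]
is affine in $\overline{\mathbb{P}}$. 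It is therefore convex on the convex set $\overline{\mathfrak{P}}_{\Theta}$, which is convex by Lemma~\ref{V=Vbar}(1). One small point needs care: outside a $\overline{\mathbb{P}}\times\mathrm{d}t$-null set the triple $(\overline{b},\overline{c},\overline{v})$ lies in $\overline{\varphi}(\Theta)$. I will extend $\overline{L}$ by $+\infty$ off $\overline{\varphi}(\Theta)$ (and set $\Phi$ by this convention), so that $\Phi$ is well defined everywhere. Since $\overline{\varphi}(\Theta)$ is closed (it is the image of the compact set $\Theta$ under the continuous injective map $\overline{\varphi}$) and convex, this extension preserves convexity and lower semicontinuity.

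For lower semicontinuity, take $\overline{\mathbb{P}}^n\to\overline{\mathbb{P}}^0$ in $\overline{\mathfrak{P}}_{\Theta}$, pass to a subsequence realising $\liminf\overline{J}(\overline{\mathbb{P}}^n)$, and apply Theorem~\ref{Skorokhod} to obtain $z^n\to z^0$ $\lambda$-a.s. The goal is to show $\Phi(z^0)\le\liminf_n\Phi(z^n)$ $\lambda$-a.s. Fatou's lemma in $\lambda$ then gives $\overline{J}(\overline{\mathbb{P}}^0)\le\liminf_n\overline{J}(\overline{\mathbb{P}}^n)$. Fix a good $s\in[0,1]$ and work on $([0,1],\mathrm{d}t)$. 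Since $\overline{\mathbb{P}}^n\in\overline{\mathfrak{P}}_{\Theta}$ and $\Theta$ is bounded by Assumption~\ref{B}, for a.e.\ $s$ the derivatives $u^n_t:=(z^{n,\overline{b}}_t,z^{n,\overline{c}}_t,z^{n,\overline{v}}_t)$ are uniformly bounded in each coordinate, since $g_i$ is bounded by $|x|^2\wedge1$ or by $1$. Moreover $\overline{B},\overline{C},\overline{V}^i$ are the integrals of these derivatives, and by Lemma~\ref{V=Vbar}(1) the limit $\overline{\mathbb{P}}^0$ also lies in $\overline{\mathfrak{P}}_{\Theta}$, so the same holds for $u^0$. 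Uniform convergence of the integrals $\int_0^\cdot u^n_t\,\mathrm{d}t\to\int_0^\cdot u^0_t\,\mathrm{d}t$, together with uniform boundedness, implies that each coordinate of $u^n$ converges to that of $u^0$ weakly in $L^1([0,1])$. Test functions $\mathbf 1_{[0,r]}$ suffice here, extended by density using the $L^\infty$ bound. In addition $z^{n,\overline{X}}\to z^{0,\overline{X}}$ in $J_1$, hence in measure in $t$, and in fact pointwise at every $t$ outside the countable jump set of $z^{0,\overline{X}}$. That is the only way $L$ actually sees the path here, since $\omega\mapsto L(t,\omega,\theta)$ is lower semicontinuous on the whole path space $\Omega$ and $t$ enters only measurably.

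The key step is then Balder's lower semicontinuity theorem for integral functionals, \cite[Theorem~4.9]{Balder1986}. Its hypotheses are an integrand $(t,x,u)\mapsto\overline{L}(t,x,u)$ that is nonnegative, measurable in $t$, lower semicontinuous in $(x,u)$ and convex in $u$, together with $x_n\to x$ strongly (here the path argument converges in $\Omega$ for every $t$, trivially) and $u_n\rightharpoonup u$ weakly in $L^1$. Its conclusion is $\int_0^1\overline{L}(t,x,u_t)\,\mathrm{d}t\le\liminf\int_0^1\overline{L}(t,x_n,u^n_t)\,\mathrm{d}t$. The required properties of $\overline{L}$ transfer from Assumption~\ref{L}. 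Convexity holds because $\overline{\varphi}$ is additive and positively homogeneous, so $\overline{\varphi}$ maps convex combinations to convex combinations. Lower semicontinuity holds because $\overline{\varphi}^{-1}$ is continuous on $\overline{\varphi}(\Theta)$: $\Theta$ is compact by Remark~\ref{Thetacompact}, and a continuous bijection from a compact set is a homeomorphism. Continuity of $\overline{\varphi}$ itself uses the convergence-determining property of $\mathcal{C}^+(\mathbb{R}^d)$. The component $\mathbb{R}^{\mathbb{N}}$ is not a normed space, so I would instead apply Balder's theorem with weak convergence of each finite-dimensional block and the product weak topology. Alternatively, I would use the equivalent Polish-space version \cite[Theorem~3.2]{Balder1984}, noting that a bounded sequence in $\mathbb{R}^{\mathbb{N}}$-valued functions with coordinatewise weak $L^1$ convergence fits that framework.

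I expect the main obstacle to be exactly this identification. One difficulty is the infinite-dimensional control variable $\overline{v}\in\mathbb{R}^{\mathbb{N}}$. The other is verifying Balder's hypotheses, namely weak convergence in the appropriate sense plus lower semicontinuity of the extended integrand in the product topology, without any continuity in $t$. If the general theorem is awkward to invoke, my fallback is a direct route. Use Mazur's lemma to pass from weak to strong convergence of convex combinations of $u^n$, and handle the $n$-dependence of the path via the inf-convolution/Yosida approximation of $\overline{L}$ in $(\omega,u)$ by Lipschitz functions increasing to $\overline{L}$, which is possible since $\overline{L}\ge0$ is lower semicontinuous. Convexity in $u$ is preserved by the inf-convolution if it is done in $u$ only for fixed $\omega$; the joint approximation needs a careful order of limits. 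Monotone convergence finally removes the approximation.
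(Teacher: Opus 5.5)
Your proposal is correct, and your treatment of lower semicontinuity takes a genuinely different route from the paper's.

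The paper applies Balder's Theorem~4.9 once, on the product space $([0,1]\times[0,1],\mathrm{d}t\times\lambda)$. It uses uniform integrability of $z^{n,\overline{b}},z^{n,\overline{c}},z^{n,\overline{v}^i}$ there, extracts weakly convergent subsequences by Dunford--Pettis and a diagonal argument, identifies the limits via Vitali, and builds a Nagumo function on $\mathbb{R}^d\times\mathbb{R}^{d^2}\times\mathbb{R}^{\mathbb{N}}$ by de la Vall\'ee Poussin.

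You instead freeze the Skorokhod sample point $s$. You get weak convergence in $t$ of the whole sequence of derivatives directly from uniform convergence of $\overline{B},\overline{C},\overline{V}^i$ and the pathwise $L^\infty$ bound from Assumption~\ref{B}. You then apply a deterministic lower semicontinuity theorem in $t$, with the path argument fixed at $z^{n,\overline{X}}(s)\to z^{0,\overline{X}}(s)$ in $\Omega$, and integrate in $s$ by Fatou. This buys you several simplifications:
\begin{itemize}
\item no weak-compactness extraction is needed;
\item the weak limit is identified automatically as the derivative of the uniform limit;
\item the infinite-dimensional hypotheses you worry about become trivial. Pairing with bounded test functions whose $\mathbb{R}^{\mathbb{N}}$-component lies in $c_{00}$ (the paper's dual $P$) is exactly coordinatewise weak convergence. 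Nagumo tightness is free because $u^n_t$ takes values in the compact set $\overline{\varphi}(\Theta)$.
\end{itemize}
So your Mazur/inf-convolution fallback is unnecessary. The paper's route, for its part, only needs joint uniform integrability in $(s,t)$. That is the form integrability takes when it comes from a bound on expectations rather than from a pathwise bound.

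You also supply two points the paper leaves implicit. First, convexity: the paper's proof treats only lower semicontinuity, and your observation that $\overline{J}(\overline{\mathbb{P}})=\int\Phi\,\mathrm{d}\overline{\mathbb{P}}$ is affine settles it. Second, the lower semicontinuity of $\overline{L}$, which follows from continuity of $\overline{\varphi}^{-1}$ on the compact set $\overline{\varphi}(\Theta)$.

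Two small corrections:
\begin{itemize}
\item Continuity of $\overline{\varphi}$ is immediate from the definition of the topology on $\mathcal{L}$, since each $g_i$ is bounded, continuous and vanishes near the origin. The convergence-determining property would instead give continuity of the inverse directly.
\item The bound on $\overline{v}^{2i-1}$ does not follow from $g_{2i-1}\le 1$, since $F$ may have infinite mass. It follows from $g_{2i-1}\le C_i(|z|^2\wedge1)$ together with $|z|^2\wedge1\le|z|^2\wedge|z|$ and Assumption~\ref{B}.
\end{itemize}
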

\begin{proof}
Let $\{\overline{\mathbb{P}}^n\}_{n\in\mathbb{N}}\subseteq\overline{\mathfrak{P}}_{\Theta}$ and $\overline{\mathbb{P}}^{0}\in\overline{\mathfrak{P}}_{\Theta}$ be such that $\overline{\mathbb{P}}^n\to\overline{\mathbb{P}}^{0}$ weakly.
We must show $\overline{J}(\overline{\mathbb{P}}^{0})\le\liminf_{n\to\infty}\overline{J}(\overline{\mathbb{P}}^n)$; without loss of generality we assume the right‑hand side is finite and that the limit inferior is a true limit, otherwise the statement is trivial. Passing to a subsequence, we may also suppose that $\lim_{n\to+\infty}\overline{J}(\overline{\mathbb{P}}^n)<\infty$.
Apply Theorem~\ref{Skorokhod} to $\{\overline{\mathbb{P}}^n\}_{n\in\mathbb{N}_0}$. By the construction of $\overline{\mathfrak{P}}_{\Theta}$ (cf.~\cite[Corollary~3.2, Lemmas~3.6 and 3.7]{LiuNeufeld2019TAMS}) we obtain the following:
\begin{enumerate}
\item The following convergences hold pointwise with respect to $\mathrm{d}t\times\lambda$:
  \[
  \begin{aligned}
  z_t^{n,\overline{B}}\to z_t^{0,\overline{B}} &= \int_0^t z^{0,\overline{b}}_s\,\mathrm{d}s \quad \lambda\text{-a.s.}, \\
  z_t^{n,\overline{C}}\to z_t^{0,\overline{C}} &= \int_0^t z^{0,\overline{c}}_s\,\mathrm{d}s \quad \lambda\text{-a.s.}, \\
  z_t^{n,\overline{V}^i}\to z_t^{0,\overline{V}^i} &= \int_0^t\int_{\mathbb{R}^d} g_i(z)\,\overline{F}_s(z^0)\,\mathrm{d}z\,\mathrm{d}s \quad \lambda\text{-a.s.}
  \end{aligned}
  \]
\item\label{unint} The following sequences are uniformly integrable on the product space $([0,1]\times[0,1],\mathrm{d}t\times\lambda)$:
  \[
  \begin{aligned}
  \{z^{n,\overline{b}}\}_{n\in\mathbb{N}}&\subseteq L^1([0,1]\times[0,1];\mathbb{R}^d),\\
  \{z^{n,\overline{c}}\}_{n\in\mathbb{N}}&\subseteq L^1([0,1]\times[0,1];\mathbb{R}^{d^2}),\\
  \{z^{n,\overline{v}^{i}}\}_{n\in\mathbb{N}}&\subseteq L^1([0,1]\times[0,1];\mathbb{R}), \quad i=1,2,\dots.
  \end{aligned}
  \]
  The same holds for the sequences $\{z^{n,\overline{B}}\}_{n\in\mathbb{N}}$, $\{z^{n,\overline{C}}\}_{n\in\mathbb{N}}$, and $\{z^{n,\overline{V}^{i}}\}_{n\in\mathbb{N}}$.
\end{enumerate}

By the Dunford–Pettis theorem (see~\cite[Theorem~17]{JoeUniformintegrability1991}), uniform integrability implies relative weak compactness in $L^1$. Applying this to each sequence and using a diagonal argument, we can pass to a further subsequence (still denoted by $n$) such that
\begin{alignat*}{2}
& z^{n,\overline{b}} \to \mathfrak{b} \quad && \text{weakly in } L^1([0,1]\times[0,1];\mathbb{R}^d),\\
& z^{n,\overline{c}} \to \mathfrak{c} \quad && \text{weakly in } L^1([0,1]\times[0,1];\mathbb{R}^{d^2}),\\
& z^{n,\overline{v}^i} \to \mathfrak{v}^i \quad && \text{weakly in } L^1([0,1]\times[0,1];\mathbb{R}),\quad i\in\mathbb{N}.
\end{alignat*}
Set $\mathfrak{v}=\{\mathfrak{v}^{i}\}_{i\in\mathbb{N}}$.
Fix $t\in[0,1]$. For any bounded measurable $f:[0,1]\to\mathbb{R}$, Vitali’s convergence theorem yields
\[
\mathbb{E}^{\lambda}\bigl[f z^{0,\overline{B}}_t\bigr]=\lim_{n\to+\infty}\mathbb{E}^{\lambda}\bigl[f z^{n,\overline{B}}_t\bigr]
=\lim_{n\to+\infty}\mathbb{E}^{\lambda}\Bigl[f \int_0^t z^{n,\overline{b}}_s\,\mathrm{d}s\Bigr]
=\mathbb{E}^{\lambda}\Bigl[f \int_0^t \mathfrak{b}_s\,\mathrm{d}s\Bigr].
\]
Thus $\int_0^t \mathfrak{b}_s\,\mathrm{d}s = z^{0,\overline{B}}_t$ $\lambda$-a.s., which implies $\mathfrak{b}=z^{0,\overline{b}}$ $\lambda$-a.s.\ (Recall from \eqref{bcF=bcv} that $z^{0,\overline{b}} = \overline{b}^{\overline{\mathbb{P}}^0}$.)
The same argument identifies $\mathfrak{c}=z^{0,\overline{c}}$ and $\mathfrak{v}^{i}=z^{0,\overline{v}^{i}}$. In summary,
\begin{equation}\label{identify}
(\mathfrak{b},\mathfrak{c},\mathfrak{v})
=\bigl( \overline{b}, \overline{c}, \overline{v} \bigr)\circ z^{0}
=\Bigl(\overline{b}^{\overline{\mathbb{P}}^0}, \overline{c}^{\overline{\mathbb{P}}^0}, \bigl( \int_{\mathbb{R}^d} g_i(z)\,\overline{F}^{\overline{\mathbb{P}}^0}(\mathrm{d}z) \bigr)_{i \in \mathbb{N}} \Bigr)\circ z^{0},
\qquad \mathrm{d}t\times\lambda\text{-a.s.}
\end{equation}
By Assumption~\ref{L}, the cost function
\[
\overline{L}: (t,\omega,b,c,v)\in[0,1]\times\Omega\times\overline{\varphi}(\Theta) \mapsto \overline{L}(t,\omega,b,c,v)\in[0,+\infty),
\]
is nonnegative, jointly measurable, jointly lower semicontinuous in $(\omega,b,c,v)$ and jointly convex in $(b,c,v)$.
We extend $\overline{L}$ to the whole product space $[0,1]\times\Omega\times\mathbb{R}^d\times\mathbb{R}^{d^2}\times\mathbb{R}^{\mathbb{N}}$ by setting
\[
\overline{L}(t,\omega,b,c,v) = +\infty \quad\text{for } (b,c,v) \notin \overline{\varphi}(\Theta).
\] 
By Remark~\ref{Thetacompact}, under Assumptions~\ref{B} and~\ref{J}, the closed convex set $\Theta$ is compact. Hence $\overline{\varphi}(\Theta)$ is convex and compact. The extended function, still denoted by $\overline{L}$, remains jointly lower semicontinuous in $(\omega,b,c,v)$, jointly convex in $(b,c,v)$, and coincides with the original $\overline{L}$ on $\overline{\varphi}(\Theta)$.
In order to apply Theorem~4.9 of~\cite{Balder1986}, we introduce the following framework. Let $T:=([0,1]\times[0,1],\mathcal{B}([0,1]\times[0,1]),\lambda(\mathrm{d}x)\,\mathrm{d}t)$ and consider the locally convex Polish space
\[
V:=\mathbb{R}^d \times \mathbb{R}^{d^2} \times \mathbb{R}^{\mathbb{N}},
\]
where each finite‑dimensional factor carries its Euclidean topology and $\mathbb{R}^{\mathbb{N}}$ carries the product topology.
Its strong dual (the topological dual equipped with the strong topology, which coincides with the Mackey topology) can be identified algebraically and topologically with
\[
P:=\mathbb{R}^d \times \mathbb{R}^{d^2} \times c_{00},
\]
where $c_{00}$ denotes the space of real sequences with only finitely many non‑zero terms, endowed with the locally convex direct sum topology (the LF topology). This dual space is a locally convex Souslin space (see~\cite[Proposition~A.9]{Treves1967TVP}). The pairing
\[
\langle (b,c,\{v^i\}_{i\in\mathbb{N}}),\,(p,\Gamma,\{f^i\}_{i\in\mathbb{N}}) \rangle = b\cdot p + c\cdot \Gamma + \sum_{i=1}^\infty f^i v^i,
\]
is separating, so $(V,P,\langle \cdot,\cdot\rangle)$ forms a strict duality.
Let $\mathcal{V}$ be the decomposable space of equivalence classes of functions from $T$ to $V$ that are scalarly $\lambda(\mathrm{d}x)\,\mathrm{d}t$-integrable, i.e.\ functions $\vartheta = (\overline{b},\overline{c},\{\overline{v}^i\}_{i\in\mathbb{N}})$ such that for every $\varrho = (p,\Gamma,\{f^i\}_{i\in\mathbb{N}})\in P$ the real function $(x,t)\mapsto \langle\vartheta_t(x),\varrho\rangle$ is $\mathrm{d}t\,\mathrm{d}\lambda(x)$-integrable.
Let $\mathcal{P}$ be the decomposable space of equivalence classes of functions $\varrho = (p,\Gamma,\{f^i\}_{i\in\mathbb{N}})$ from $T$ to $P$ with the property that $p\in L^\infty(T;\mathbb{R}^d)$, $\Gamma\in L^\infty(T;\mathbb{R}^{d^2})$, each $f^i\in L^\infty(T;\mathbb{R})$, and only finitely many of the $f^i$ are not identically zero. The duality between $\mathcal{V}$ and $\mathcal{P}$ is given by
\[
\langle \vartheta,\varrho\rangle_{\mathcal{V},\mathcal{P}} = \mathbb{E}^{\lambda}\Bigl[ \int_0^1 \langle \vartheta_t(x),\varrho_t(x)\rangle\,\mathrm{d}t\Bigr],
\]
which is well defined and strict.
For an arbitrary $\varrho=(p,\Gamma,\{f^i\}_{i\in\mathbb{N}})\in \mathcal{P}$,
we have
\begin{align*}
\langle (z^{n,\overline{b}},z^{n,\overline{c}},
&\{z^{n,\overline{v}^i}\}_{i\in\mathbb{N}}),\,(p,\Gamma,\{f^i\}_{i\in\mathbb{N}})\rangle\\
&=\mathbb{E}^{\lambda}\biggl[\int_0^1\Bigl(z_t^{n,\overline{b}}(x)\cdot p_t(x) + z_t^{n,\overline{c}}(x)\cdot\Gamma_t(x)+\sum_{i\in I}z_t^{n,\overline{v}^i}(x)\!\cdot f_t^i(x)\Bigr)\,\mathrm{d}t\biggr]\\
&=\mathbb{E}^{\lambda}\Bigl[\int_0^1 z_t^{n,\overline{b}}(x)\cdot p_t(x)\,\mathrm{d}t\Bigr]+\mathbb{E}^{\lambda}\Bigl[\int_0^1 z_t^{n,\overline{c}}(x)\cdot\Gamma_t(x)\,\mathrm{d}t\Bigr]+\sum_{i\in I}\mathbb{E}^{\lambda}\Bigl[\int_0^1z_t^{n,\overline{v}^i}(x)\!\cdot f_t^i(x)\,\mathrm{d}t\Bigr],
\end{align*}
for some finite set $I\subset\mathbb{N}$. Recalling the uniform integrability \eqref{unint} and using Vitali’s convergence theorem again, we obtain
\begin{align*}
\lim_{n\to+\infty}
&\langle (z^{n,\overline{b}},z^{n,\overline{c}},\{z^{n,\overline{v}^i}\}_{i\in\mathbb{N}}),\,(p,\Gamma,\{f^i\}_{i\in\mathbb{N}})\rangle\\
&=\mathbb{E}^{\lambda}\Bigl[\int_0^1 z_t^{0,\overline{b}}(x)\cdot p_t(x)\,\mathrm{d}t\Bigr]+\mathbb{E}^{\lambda}\Bigl[\int_0^1 z_t^{0,\overline{c}}(x)\cdot\Gamma_t(x)\,\mathrm{d}t\Bigr]+\sum_{i\in I}\mathbb{E}^{\lambda}\Bigl[\int_0^1 z_t^{0,\overline{v}^i}(x)\!\cdot f_t^i(x)\,\mathrm{d}t\Bigr]\\
&=\langle (z^{0,\overline{b}},z^{0,\overline{c}},\{z^{0,\overline{v}^i}\}_{i\in\mathbb{N}}),\,(p,\Gamma,\{f^i\}_{i\in\mathbb{N}})\rangle.
\end{align*}
Finally, we need to verify the Nagumo tightness of the sequence $(z^{n,\overline{b}},z^{n,\overline{c}},\{z^{n,\overline{v}^i}\}_{i\in\mathbb{N}})_{n\in\mathbb{N}_0}$.
Uniform integrability implies, via the de la Vallée Poussin criterion, the existence of non‑decreasing continuous convex functions $\varphi_b,\varphi_c,\varphi_i : [0,+\infty)\to[0,+\infty)$ with
$$\lim_{r\to\infty}\frac{\min\{\varphi_b(r),\varphi_c(r),\varphi_i(r)\}}{r}=+\infty,$$
and such that
\[
\sup_{n\in\mathbb{N}_0} \int_{[0,1]\times[0,1]} \varphi_b(\|z^{n,\overline{b}}_t(x)\|)\,\lambda(\mathrm{d}x)\,\mathrm{d}t \le 1,\;
\sup_{n\in\mathbb{N}_0} \int_{[0,1]\times[0,1]} \varphi_c(\|z^{n,\overline{c}}_t(x)\|)\,\lambda(\mathrm{d}x)\,\mathrm{d}t \le 1,
\]
\[
\sup_{n\in\mathbb{N}_0} \int_{[0,1]\times[0,1]} \varphi_i(|z^{n,\overline{v}^i}_t(x)|)\,\lambda(\mathrm{d}x)\,\mathrm{d}t \le 2^{-i}
\]
(the latter after a possible rescaling). Define a function $h : V \to [0,+\infty]$ by
$$h(b,c,\{v^i\}_{i\in\mathbb{N}}) = \varphi_b(\|b\|) + \varphi_c(\|c\|) + \sum_{i=1}^{\infty} \varphi_i(|v^i|).$$
The function $h$ is convex and lower semicontinuous on $V$. For any slope $\varrho = (p,\Gamma,\{f^i\})\in P$ and $\gamma\in\mathbb{R}$ the sub‑level set
$\{\nu\in V : h(\nu) - \langle \nu,\varrho\rangle \le \gamma \}$
is closed and each coordinate is bounded; by Tychonoff’s theorem it is compact. Hence $h$ is of Nagumo type on $V$.
Now compute
\[
\begin{aligned}
\sup_{n\in\mathbb{N}_0} \int_{[0,1]\times[0,1]}& h\bigl(z_t^{n,\overline{b}}(x),z_t^{n,\overline{c}}(x),\{z_t^{n,\overline{v}^i}(x)\}_{i\in\mathbb{N}}\bigr)\,\lambda(\mathrm{d}x)\,\mathrm{d}t \\
&\le \sup_{n\in\mathbb{N}_0} \int_{[0,1]\times[0,1]} \varphi_b(\|z^{n,\overline{b}}_t(x)\|)\,\lambda(\mathrm{d}x)\,\mathrm{d}t
 + \sup_{n\in\mathbb{N}_0} \int_{[0,1]\times[0,1]} \varphi_c(\|z^{n,\overline{c}}_t(x)\|)\,\lambda(\mathrm{d}x)\,\mathrm{d}t \\
&\quad + \sum_{i=1}^{\infty} \sup_{n\in\mathbb{N}_0} \int_{[0,1]\times[0,1]} \varphi_i(|z^{n,\overline{v}^i}_t(x)|)\,\lambda(\mathrm{d}x)\,\mathrm{d}t \\
&\le 1 + 1 + \sum_{i=1}^{\infty} 2^{-i} = 3 < +\infty .
\end{aligned}
\]
Therefore $(z^{n,\overline{b}},z^{n,\overline{c}},\{z^{n,\overline{v}^i}\}_{i\in\mathbb{N}})_{n\in\mathbb{N}_0}$ is a Nagumo tight subset of $\mathcal{V}$.
Applying~\cite[Theorem~4.9]{Balder1986}, we conclude that
\begin{align*}
\overline{J}(\overline{\mathbb{P}}^0)
&=\mathbb{E}^{\lambda}\Bigl[\int_{0}^{1}\overline{L}\bigl(t,z^{0,\overline{X}},z_t^{0,\overline{b}},z_t^{0,\overline{c}},z_t^{0,\overline{v}}\bigr)\,\mathrm{d}t\Bigr]\\
&\leq\liminf_{n\to+\infty}\mathbb{E}^{\lambda}\Bigl[\int_{0}^{1}\overline{L}\bigl(t,z^{n,\overline{X}},z_t^{n,\overline{b}},z_t^{n,\overline{c}},z_t^{n,\overline{v}}\bigr)\,\mathrm{d}t\Bigr]\\
&=\liminf_{n\to+\infty}\mathbb{E}^{\overline{\mathbb{P}}^n}\Bigl[\int_{0}^{1}\overline{L}\bigl(t, \overline{X}, \overline{b}_t,\overline{c}_t,\overline{v}_t\bigr)\,\mathrm{d}t\Bigr]
=\liminf_{n\to+\infty}\overline{J}(\overline{\mathbb{P}}^n).
\end{align*}
This completes the proof.
\end{proof}
We have now collected all the ingredients for the proof of Theorem~\ref{Vsemicontinuous}.
\begin{proof}[Proof of Theorem~\ref{Vsemicontinuous}]
The convexity of $V$ can be proved exactly as in~\cite[Lemma~3.15]{TanTouzi2013AOP} (see also the convexity argument in the proof of Theorem~8.3 in~\cite{BVPammer2022Bernoulli}).
We therefore concentrate on the lower semicontinuity.
Let $(\mu_0^n)_{n\in\mathbb{N}},(\mu_1^n)_{n\in\mathbb{N}}\subseteq\mathcal{P}(\mathbb{R}^d)$ satisfy $\mu_0^n\to\mu_0$ and $\mu_1^n\to\mu_1$. If $\liminf_{n}V(\mu_0^n,\mu_1^n)=+\infty$, there is nothing to prove. Hence we assume the limit inferior is finite and, passing to a subsequence, that it is a true limit and that for each $n$ there exists
\[
\mathbb{P}^n\in\mathfrak{P}_{\Theta}(\mu_0^n,\mu_1^n)\quad\text{with}\quad
J(\mathbb{P}^n)\le V(\mu_0^n,\mu_1^n)+\frac1n.
\]
For each $n$, lift $\mathbb{P}^n$ to the enlarged space by $\overline{\mathbb{P}}^n:=\mathbb{P}^n \circ(\Psi^{\mathbb{P}^{n}})^{-1}\in\overline{\mathfrak{P}}_{\Theta}(\mu_0^n,\mu_1^n)$,
then $\overline{J}(\overline{\mathbb{P}}^n)=J(\mathbb{P}^{n})$ by Lemma~\ref{V=Vbar}.
By the compactness result in Lemma~\ref{V=Vbar} (or its proof), the family $\{\overline{\mathbb{P}}^n\}_{n\in\mathbb{N}}$ is tight. Passing to a further subsequence (still indexed by $n$) and noting that $\overline{X}$ has no fixed time of discontinuity (see~\cite[Subsection~3.2]{LiuNeufeld2019TAMS}), we obtain a weak limit point $\overline{\mathbb{P}}\in\overline{\mathfrak{P}}_{\Theta}(\mu_0,\mu_1)$.
Now apply Theorem~\ref{JPlowsemicontinuous} to obtain that
\[
\overline{J}(\overline{\mathbb{P}})\leq\liminf_{n\to\infty}\overline{J}(\overline{\mathbb{P}}^n)
=\liminf_{n\to\infty} J(\mathbb{P}^{n}).
\]
Using Lemma~\ref{V=Vbar} again, the push‑forward measure $\mathbb{P}:=\overline{\mathbb{P}}\circ \overline{X}^{-1}$ belongs to $\mathfrak{P}_{\Theta}(\mu_0,\mu_1)$ with $J(\mathbb{P})\leq\overline{J}(\overline{\mathbb{P}})$.
Consequently,
\[
V(\mu_0,\mu_1)\leq J(\mathbb{P}) \leq\overline{J}(\overline{\mathbb{P}}) \leq \liminf_{n\to\infty} J(\mathbb{P}^{n})
\leq \liminf_{n\to\infty} V(\mu_0^n,\mu_1^n).
\]
Since the sequences $(\mu_0^n,\mu_1^n)$ were arbitrary, $V$ is lower semicontinuous on $\mathcal{P}(\mathbb{R}^d)\times\mathcal{P}(\mathbb{R}^d)$.
\end{proof}

\section{Dynamic programming and viscosity solution formulation}\label{sectmeasele}
In this section we assume that
\begin{equation}\label{definel}
L(t,\omega,\theta)=\ell(t,\omega_t,\theta),
\end{equation}
where $\ell:[0,1]\times\mathbb{R}^d\times\Theta\to\mathbb{R}^{+}$ is a deterministic cost function. Then the function $\lambda_0^{\lambda_1}$ (denoted by $\lambda(0,\cdot)$ for brevity) in \eqref{Rc} reduces to the value function of a standard Markovian stochastic control problem:
\begin{equation}
\lambda(0,x):=\inf_{\mathbb{P}\in\mathfrak{P}_{\Theta}(\delta_x)}\mathbb{E}^{\mathbb{P}}\Bigl[\int_{0}^{1} \ell\bigl(t, X_t, b_t^{\mathbb{P}}, c_t^{\mathbb{P}}, F_t^{\mathbb{P}}\bigr) \, \mathrm{d}t+\lambda_1(X_1) \Bigr].
\end{equation}
Our main objective is to characterise $\lambda$ by means of the corresponding HJB equation.
Let $\Theta$ be Borel measurable. For $t\in[0,1]$, define
\begin{equation}
\begin{aligned}
\mathfrak{P}_{\Theta}^{t,x}
:=\bigl\{\mathbb{P}\in\mathfrak{P}_{sem}^{ac}\mid
&\ (b^{\mathbb{P}},c^{\mathbb{P}},F^{\mathbb{P}})\in\Theta\ \text{on}\ \Omega\times[t,1]\ 
\mathbb{P}\times\mathrm{d}t\text{-a.s. and }\\
&\ \mathbb{P}[X_s=x,\ 0\le s\le t]=1\bigr\}.
\end{aligned}
\end{equation}
The dynamic value function is defined, for any $\lambda_1\in C_b(\mathbb{R}^d)$, by
\begin{equation}
\lambda(t,x):=\inf_{\mathbb{P}\in\mathfrak{P}_{\Theta}^{t,x}}\mathbb{E}^{\mathbb{P}}
\Bigl[\int_{t}^{1} \ell(s,X_s,b_s^{\mathbb{P}},c_s^{\mathbb{P}},F_s^{\mathbb{P}})\,\mathrm{d}s
+\lambda_1(X_1)\Bigr].
\end{equation}
We also introduce the corresponding probability measures on the enlarged space $\overline{\Omega}$. For every $(t,x,b,c,v)\in [0,1]\times\mathbb{R}^d\times\mathbb{R}^d\times\mathbb{R}^{d^2}\times\mathbb{R}^{\mathbb{N}}$, let 
\begin{equation}
\begin{aligned}
\overline{\mathfrak{P}}_{\Theta}^{t,x,b,c,v}
:=\Bigl\{\overline{\mathbb{P}}\in\overline{\mathfrak{P}}_{sem}^{ac}\mid
&\,(\overline{b}_s,\overline{c}_s,\overline{v}_s)\in\overline{\varphi}(\Theta)\ 
\text{on}\ \overline{\Omega}\times[t,1]\ \overline{\mathbb{P}}\times\mathrm{d}t\text{-a.s. and }\\
&\,\overline{\mathbb{P}}\bigl[(\overline{X}_s,\overline{B}_s,\overline{C}_s,\{\overline{V}_s^i\}_{i\in\mathbb{N}})=(x,b,c,v),\, 0\le s\le t\bigr]=1\Bigr\}.
\end{aligned}
\end{equation}
By arguments similar to those in Lemma~\ref{V=Vbar}, if $\Theta \subseteq \mathbb{R}^d \times \mathbb{S}_+^d \times \mathcal{L}$ is closed, convex and satisfies Assumption~\ref{B} and~\ref{J}, and the cost function $\ell$ satisfies Assumption~\ref{L}, then
\begin{equation}\label{liftlambda}
\lambda(t,x)=\inf_{\overline{\mathbb{P}}\in\overline{\mathfrak{P}}_{\Theta}^{t,x,b,c,v}}\mathbb{E}^{\overline{\mathbb{P}}}
\Bigl[\int_{t}^{1} \overline{\ell}(s,\overline{X}_s,\overline{b}_s,\overline{c}_s,\overline{v}_s)\,\mathrm{d}s+\lambda_1(\overline{X}_1)\Bigr]
\end{equation}
holds for all $(b,c,v)\in\mathbb{R}^d\times\mathbb{R}^{d^2}\times\mathbb{R}^{\mathbb{N}}$.
Next, we present the dynamic programming principle. This principle has already been established by Tan and Touzi~\cite{TanTouzi2013AOP} in the continuous case. We give a complete account of the version needed here.
\begin{thm}\label{DPP}
Let $\Theta \subseteq \mathbb{R}^d \times \mathbb{S}_+^d \times \mathcal{L}$ be closed, convex, and satisfy Assumptions~\ref{B} and~\ref{J}. Moreover, let the cost function $\ell$ satisfy Assumption~\ref{L}. Then for every $\overline{\mathbb{F}}$-stopping time $\overline{\tau}$ taking values in $[t,1]$ and all $(b,c,v)\in\mathbb{R}^d\times\mathbb{R}^{d^2}\times\mathbb{R}^{\mathbb{N}}$, we have
\begin{equation}
\lambda(t,x)=\inf_{\overline{\mathbb{P}}\in\overline{\mathfrak{P}}_{\Theta}^{t,x,b,c,v}}\mathbb{E}^{\overline{\mathbb{P}}}
\Bigl[\int_{t}^{\overline{\tau}} \overline{\ell}(s,\overline{X}_s,\overline{b}_s,\overline{c}_s,\overline{v}_s)\,\mathrm{d}s+\lambda(\overline{\tau},\overline{X}_{\overline{\tau}})\Bigr].
\end{equation}
\end{thm}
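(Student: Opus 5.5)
We will follow the abstract measurable-selection route of El Karoui--Tan and of Tan--Touzi~\cite{TanTouzi2013AOP}, transplanted to the enlarged space $\overline{\Omega}$. On $\overline{\Omega}$ the characteristics $(\overline{b},\overline{c},\overline{v})$ are pathwise functionals of the canonical process. Costs and constraints are therefore measurable functions of $(\overline{\omega},s)$ alone, which is the setting in which the abstract DPP applies. The point of carrying $(b,c,v)$ as frozen initial values in $\overline{\mathfrak{P}}_{\Theta}^{t,x,b,c,v}$ is that the constraint set at an intermediate time $\overline{\tau}$ depends on the whole current state $\overline{\mathbb{X}}_{\overline{\tau}}$. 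By \eqref{liftlambda}, $\lambda(t,x)$ does not depend on these frozen values, so the right-hand side reduces to the original Markovian value function.

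The plan has three structural steps.

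\emph{(i) Measurability.} We show that the graph $\{(t,\overline{\omega},\overline{\mathbb{P}}) : \overline{\mathbb{P}}\in\overline{\mathfrak{P}}_{\Theta}^{t,\overline{\mathbb{X}}_t(\overline{\omega})}\}$, generalised to path-dependent initial segments, is analytic. The argument follows \cite[Theorem~2.5--2.6]{NeufeldNutz2014SPA}, using that $\overline{b},\overline{c},\overline{v}$ are Borel functionals of the path. Combined with the lower semicontinuity of Theorem~\ref{JPlowsemicontinuous} and the compactness in Lemma~\ref{V=Vbar}(1), this gives that the value function $(t,\overline{\omega})\mapsto\lambda(t,\overline{X}_t(\overline{\omega}))$ is lower semianalytic. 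Hence it is universally measurable, so the right-hand side of the theorem is well defined.

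\emph{(ii) Stability under conditioning.} For $\overline{\mathbb{P}}\in\overline{\mathfrak{P}}_{\Theta}^{t,x,b,c,v}$ and the stopping time $\overline{\tau}$, take a regular conditional probability $\overline{\mathbb{P}}_{\overline{\omega}}$ given $\overline{\mathcal{F}}_{\overline{\tau}}$. We show that for $\overline{\mathbb{P}}$-a.e.\ $\overline{\omega}$ it lies in the shifted set started at $\overline{\mathbb{X}}_{\overline{\tau}}(\overline{\omega})$. The three required facts are: semimartingale characteristics are preserved under conditioning (Jacod--Shiryaev, or \cite{NeufeldNutz2014SPA}), the compensator identity for the $\overline{V}^i$ survives, and $\Theta$-membership $\mathrm{d}t$-a.e.\ is inherited. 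Applying the tower property to the cost gives the inequality ``$\ge$''.

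\emph{(iii) Stability under concatenation.} By the Jankov--von Neumann theorem, for $\varepsilon>0$ we choose a universally measurable $\varepsilon$-optimal kernel $\overline{\omega}\mapsto\overline{\mathbb{Q}}^{\varepsilon}_{\overline{\omega}}$ from time $\overline{\tau}(\overline{\omega})$ on. We then form $\overline{\mathbb{P}}\otimes_{\overline{\tau}}\overline{\mathbb{Q}}^{\varepsilon}$ and verify that it belongs to $\overline{\mathfrak{P}}_{\Theta}^{t,x,b,c,v}$. This amounts to checking that $\overline{X}$ remains a semimartingale whose characteristics are the pasted ones, that $\overline{M}^c,\overline{M}^d$ remain local martingales, and that $\overline{C},\overline{V}^i$ are still the correct compensators. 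Once this holds, the cost of the concatenation is at most the right-hand side plus $\varepsilon$. The Markovian structure $L=\ell(t,\omega_t,\theta)$ together with \eqref{liftlambda} then lets us replace the conditional value by $\lambda(\overline{\tau},\overline{X}_{\overline{\tau}})$. This gives ``$\le$''.

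The main obstacle is step (iii), namely proving that the pasted measure is again a semimartingale with absolutely continuous characteristics in $\Theta$ and with the correct compensators for the jump measure. We would argue through the martingale-problem characterisation (Jacod--Shiryaev II.2.21). For each test function $f\in C_b^2$, we would verify that the process $f(\overline{X})-\int(\mathcal{A}f)$, built from the pasted characteristics, is a local martingale under the concatenation. Before $\overline{\tau}$ this follows from $\overline{\mathbb{P}}$, and after $\overline{\tau}$ from $\overline{\mathbb{Q}}_{\overline{\omega}}$, glued by the standard lemma on concatenating martingales at a stopping time. Universal measurability of the kernel requires working with the completed filtrations, which is harmless by the remark in Section~\ref{2} that the characteristics coincide for $\mathbb{F}$, $\mathbb{F}_+$ and $\mathbb{F}^{\mathbb{P}}_+$.
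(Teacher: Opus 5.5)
Your proposal is correct and follows the paper's route. The ``$\ge$'' inequality comes from the conditioning property plus the tower property, and the ``$\le$'' inequality from concatenating with an $\varepsilon$-optimal measurably selected kernel. These are the paper's Lemmas~\ref{conditioningandposting} and~\ref{measele}, the latter via \cite[Theorem~A.1]{TanTouzi2013AOP}, which is the Jankov--von Neumann-type selection you invoke.

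The differences are cosmetic. The paper indexes the selector by the finite-dimensional state $(t,x,b,c,v)$ and composes it with $\overline{\omega}\mapsto(\overline{\tau},\overline{\mathbb{X}}_{\overline{\tau}})$, rather than indexing by $(t,\overline{\omega})$. It also simply cites \cite{NeufeldNutz2017TAMS} for the stability properties you plan to verify via the martingale-problem characterisation. Your attention to analyticity of the whole graph (not just each section) and to completing $\overline{\mathcal{F}}_{\overline{\tau}}$ is, if anything, more careful than the written proof.
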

Before giving the proof, we recall the following stability properties of the class $\overline{\mathfrak{P}}_{\Theta}^{t,x,b,c,v}$ under conditioning and concatenation at stopping times. These facts are essentially contained in~\cite[Theorem~2.1]{NeufeldNutz2017TAMS} and are implicit in~\cite{LiuNeufeld2019TAMS}.
\begin{de}
For any $\overline{\omega},\tilde{\omega}\in\overline{\Omega}$ and any $\overline{\mathbb{F}}$-stopping time $\overline{\tau}$ taking values in $[0,1]$, the concatenation $\overline{\omega}\otimes_{\overline{\tau}(\overline{\omega})}\tilde{\omega}$ is the path defined by
\[
(\overline{\omega}\otimes_{\overline{\tau}(\overline{\omega})}\tilde{\omega})_u := 
\begin{cases}
\overline{\omega}_u, & 0\le u < \overline{\tau}(\overline{\omega}),\\
\overline{\omega}_{\overline{\tau}(\overline{\omega})} + \tilde{\omega}_{u-\overline{\tau}(\overline{\omega})}, & u \geq \overline{\tau}(\overline{\omega}).
\end{cases}
\]
\end{de}
\begin{lem}\label{conditioningandposting}
Let $\Theta \subseteq \mathbb{R}^d \times \mathbb{S}_+^d \times \mathcal{L}$ be measurable. For any $\overline{\mathbb{P}} \in \overline{\mathfrak{P}}_{\Theta}^{t,x,b,c,v}$ and any $\overline{\mathbb{F}}$-stopping time $\overline{\tau}$ taking values in $[t,1]$, the following hold.
\begin{itemize}
\item[(1)]\text{(Conditioning property.)} There exists a family of regular conditional probability measures $(\overline{\mathbb{P}}_{\overline{\omega}})_{\overline{\omega}\in\overline{\Omega}}$ of $\overline{\mathbb{P}}$ given $\overline{\mathcal{F}}_{\overline{\tau}}$ such that for $\overline{\mathbb{P}}$-almost every $\overline{\omega} \in \overline{\Omega}$,
\[
\overline{\mathbb{P}}^{\overline{\tau},\overline{\omega}}(\cdot)
:=\overline{\mathbb{P}}_{\overline{\omega}}\bigl(\overline{\omega}\otimes_{\overline{\tau}}\tilde{\omega}\mid\tilde{\omega}\in\cdot\bigr)
\]
belongs to 
$\overline{\mathfrak{P}}_{\Theta}^{\overline{\tau}(\overline{\omega}), \overline{\omega}_{\overline{\tau}(\overline{\omega})}}$.
\item[(2)]\text{(Concatenation property.)} Let $\{\overline{\mathbb{Q}}_{\overline{\omega}}\}_{\overline{\omega}\in\overline{\Omega}}$ be a family of probability measures such that
\[
\overline{\mathbb{Q}}_{\overline{\omega}} \in \overline{\mathfrak{P}}_{\Theta}^{\overline{\tau}(\overline{\omega}), \overline{\omega}_{\overline{\tau}(\overline{\omega})}}
\quad \text{for } \overline{\mathbb{P}}\text{-a.e. } \overline{\omega},
\]
and the mapping $\overline{\omega} \mapsto \overline{\mathbb{Q}}_{\overline{\omega}}$ is $\overline{\mathcal{F}}_{\overline{\tau}}$-measurable. Then the concatenated measure
\[
(\overline{\mathbb{P}} \otimes_{\overline{\tau}} \overline{\mathbb{Q}})(\cdot) := \int_{\overline{\Omega}} \overline{\mathbb{Q}}_{\overline{\omega}}(\cdot) \, \overline{\mathbb{P}}(\mathrm{d}\overline{\omega})
\]
satisfies $\overline{\mathbb{P}} \otimes_{\overline{\tau}} \overline{\mathbb{Q}} \in \overline{\mathfrak{P}}_{\Theta}^{t,x,b,c,v}$.
\end{itemize}
\end{lem}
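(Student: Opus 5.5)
Both properties reduce to the corresponding facts on the original space from Neufeld--Nutz~\cite[Theorem~2.1]{NeufeldNutz2017TAMS}, combined with the pathwise description of the extra coordinates. Under $\overline{\mathbb{P}}$ the components $\overline{B},\overline{C},\overline{V}^i$ are the characteristics of $\overline{X}$, and $\overline{M}^c,\overline{M}^d,\overline{J}$ are the canonical decomposition. All of these are (up to $\overline{\mathbb{P}}$-null sets) determined by $\overline{X}$ through the map $\Psi$ of \eqref{Psi}. The plan is therefore to check, for each property, three things. First, the semimartingale property with absolutely continuous characteristics. Second, the constraint $(\overline{b},\overline{c},\overline{v})\in\overline{\varphi}(\Theta)$ on $[\overline{\tau},1]$. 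Third, the identification conditions defining $\overline{\mathfrak{P}}_{sem}^{ac}$, namely that the coordinates coincide with the characteristics and the decomposition.

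For (1), I take a regular conditional probability $\overline{\mathbb{P}}_{\overline{\omega}}$ given $\overline{\mathcal{F}}_{\overline{\tau}}$, which exists since $\overline{\Omega}$ is Polish and $\overline{\mathcal{F}}_{\overline{\tau}}$ is countably generated for the raw filtration. The key step is the standard fact (Jacod--Shiryaev, as used in \cite{NeufeldNutz2017TAMS}) that local martingales remain local martingales under regular conditional laws for $\overline{\mathbb{P}}$-a.e.\ $\overline{\omega}$. I will verify this via a countable family of test martingale problems. The process $\overline{X}$ is a semimartingale with characteristics $(\overline{B},\overline{C},\nu)$ iff, for a countable convergence-determining family of $f\in C_b^2$, the processes
\[
f(\overline{X}_t)-\int_0^t\Bigl(\nabla f\cdot \mathrm{d}\overline{B}_s+\tfrac12 \mathrm{tr}(\nabla^2 f\,\mathrm{d}\overline{C}_s)+\int(f(\overline{X}_{s-}+z)-f(\overline{X}_{s-})-\nabla f\cdot h(z))\,\nu(\mathrm{d}s,\mathrm{d}z)\Bigr)
\]
are local martingales. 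Local martingality of each such process under the shifted measure $\overline{\mathbb{P}}^{\overline{\tau},\overline{\omega}}$ holds off a null set by conditioning with a localising sequence and rational times. Countability lets me take a single null set. Absolute continuity and the constraint $\Theta$ on $[\overline{\tau},1]$ transfer because $\{(\overline{b},\overline{c},\overline{v})\notin\overline{\varphi}(\Theta)\}$ is a $\overline{\mathbb{P}}\times\mathrm{d}t$-null set. Fubini together with the disintegration then gives nullity under $\overline{\mathbb{P}}_{\overline{\omega}}\times\mathrm{d}t$ for a.e.\ $\overline{\omega}$. The identities $\overline{V}^i=\int g_i\,\mathrm{d}\nu$ and $[\overline{M}^c]=\overline{C}$ are preserved similarly, because compensators computed under conditional laws agree with the original ones after $\overline{\tau}$. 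This agreement follows from predictability and the uniqueness of compensators.

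For (2), the concatenated measure is well defined because $\overline{\omega}\mapsto\overline{\mathbb{Q}}_{\overline{\omega}}$ is $\overline{\mathcal{F}}_{\overline{\tau}}$-measurable. I then need $\overline{X}$ to be a semimartingale under $\overline{\mathbb{P}}\otimes_{\overline{\tau}}\overline{\mathbb{Q}}$ with characteristics that equal those under $\overline{\mathbb{P}}$ on $[0,\overline{\tau}]$ and those under $\overline{\mathbb{Q}}_{\overline{\omega}}$ on $[\overline{\tau},1]$. I use the martingale-problem characterisation above. A process that is a local martingale under $\overline{\mathbb{P}}$ up to $\overline{\tau}$, and under $\overline{\mathbb{Q}}_{\overline{\omega}}$ after $\overline{\tau}$ with measurable dependence, is a local martingale under the mixture; this is Stroock--Varadhan's concatenation lemma, adapted to càdlàg paths. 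The constraint holds a.s.\ on each piece, hence a.s.\ overall, and the prefix condition on $[0,t]$ is inherited from $\overline{\mathbb{P}}$ because $\overline{\tau}\ge t$.

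I expect the main obstacle to be localisation in (2). A localising sequence must be chosen measurably in $\overline{\omega}$ and glued across $\overline{\tau}$. Under Assumption~\ref{B} this is avoided for the drift and diffusion parts, since bounded characteristics make the test processes true martingales with uniform bounds. Without Assumption~\ref{B}, I would instead localise by the first exit times of $\overline{X}$, $\overline{B}$, $\overline{C}$ from balls, which are stopping times of the raw filtration and behave well under concatenation. A secondary point is measurability of the regular conditional distribution and of the null sets in (1). Here I would simply invoke \cite[Theorem~2.1]{NeufeldNutz2017TAMS} on the original space and transport it through $\Psi$.
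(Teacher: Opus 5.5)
The paper gives no argument for this lemma beyond pointing to \cite[Theorem~2.1]{NeufeldNutz2017TAMS} and \cite{LiuNeufeld2019TAMS}, so deferring to Neufeld--Nutz is in the same spirit. However, the reduction you actually propose fails at its premise. You assert that $\overline{B},\overline{M}^c,\overline{M}^d,\overline{J},\overline{C},\overline{V}^i$ are, up to $\overline{\mathbb{P}}$-null sets, functionals of $\overline{X}$ through $\Psi$. For a general element of $\overline{\mathfrak{P}}_{sem}^{ac}$ this is false. These coordinates are the characteristics and canonical decomposition of $\overline{X}$ relative to $\overline{\mathbb{F}}$, which is generated by \emph{all} coordinates and may be strictly larger than the filtration of $\overline{X}$. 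For instance, take $\overline{X}_t=x+\xi t+W_t$, $\overline{B}_t=\xi t$, $\overline{M}^c=W$, $\overline{C}_t=tI_d$, with $\xi$ uniform on the unit ball and independent of the Brownian motion $W$. This lies in $\overline{\mathfrak{P}}_{\Theta}^{0,x,0,0,0}$ for $\Theta=\{(b,I_d,0):|b|\le 1\}$, yet $\overline{B}$ is not a function of the path of $\overline{X}$. This is why Lemma~\ref{V=Vbar}(3) gives only an inequality: the characteristics of $\overline{X}$ in its own filtration are projections. Thus $\overline{\mathbb{P}}$ need not be of the form $\mathbb{P}\circ(\Psi^{\mathbb{P}})^{-1}$, and conditioning on $\overline{\mathcal{F}}_{\overline{\tau}}$ uses information invisible to $\mathcal{F}_{\tau}$ on $\Omega$. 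Consequently, ``invoke Neufeld--Nutz on the original space and transport it through $\Psi$'' cannot prove either part. The fix is to drop this detour and run your martingale-problem argument intrinsically on $(\overline{\Omega},\overline{\mathbb{F}})$. Existence of the regular conditional probabilities and a single exceptional null set already follow from Polishness, countable generation of $\overline{\mathcal{F}}_{\overline{\tau}}$, and countability of the test family, so no appeal to $\Psi$ is needed. That family must, however, also contain processes pinning down the auxiliary coordinates, not only It\^o-type functionals of $\overline{X}$: e.g.\ $\overline{M}^c$, $\overline{M}^c(\overline{M}^c)^{\top}-\overline{C}$, $\overline{M}^d$, the products $\overline{M}^{c,j}\overline{M}^{d,k}$, and $\int_0^{\cdot}\!\int g_i\,\mathrm{d}\mu^{\overline{X}}-\overline{V}^i$.

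Second, you never check that the conditioned and concatenated measures satisfy the prefix condition defining the target classes, and as the lemma is literally written they do not. Membership in $\overline{\mathfrak{P}}_{\Theta}^{\overline{\tau}(\overline{\omega}),\overline{\omega}_{\overline{\tau}(\overline{\omega})}}$ demands that $(\overline{X},\overline{B},\overline{C},\overline{V})$ be frozen at its value at $\overline{\tau}(\overline{\omega})$ on all of $[0,\overline{\tau}(\overline{\omega})]$. The shifted law $\overline{\mathbb{P}}^{\overline{\tau},\overline{\omega}}$ instead starts every coordinate at $0$ at time $0$, and on the horizon $[0,1]$ it is not even well defined past time $1-\overline{\tau}(\overline{\omega})$. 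Meanwhile $\overline{\mathbb{P}}_{\overline{\omega}}$ itself reproduces $\overline{\omega}$ on $[0,\overline{\tau}(\overline{\omega})]$. Moreover, $\overline{B}_0=\overline{C}_0=\overline{V}^i_0=0$ for every element of $\overline{\mathfrak{P}}_{sem}^{ac}$, so $\overline{\mathfrak{P}}_{\Theta}^{t,x,b,c,v}=\emptyset$ unless $(b,c,v)=0$. In (2) the displayed measure $\int\overline{\mathbb{Q}}_{\overline{\omega}}\,\overline{\mathbb{P}}(\mathrm{d}\overline{\omega})$ is a bare mixture with no gluing of paths. Under it, $\overline{X}_0$ has the law of $\overline{X}_{\overline{\tau}}$ under $\overline{\mathbb{P}}$ rather than $\delta_x$. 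So your assertions that its characteristics agree with those of $\overline{\mathbb{P}}$ on $[0,\overline{\tau}]$, and that the prefix condition is inherited because $\overline{\tau}\ge t$, are false for the measure actually defined. Before the martingale-problem argument can establish anything, you must repair the conventions:
\begin{itemize}
\item obtain the conditioned law by freezing $\overline{X}$ at $\overline{X}_{\overline{\tau}}$ before $\overline{\tau}(\overline{\omega})$ and restarting $\overline{B},\overline{M}^c,\overline{M}^d,\overline{J},\overline{C},\overline{V}$ from $0$ at $\overline{\tau}(\overline{\omega})$, so the target is the class with $(b,c,v)=0$;
\item define the concatenation by gluing $\overline{\omega}$ on $[0,\overline{\tau}(\overline{\omega}))$ to the $\overline{\mathbb{Q}}_{\overline{\omega}}$-path, with additive continuation of these coordinates;
\item then verify the prefix condition explicitly.
\end{itemize}
The paper also glosses over this by citing Neufeld--Nutz, whose time-shift conventions on $[0,\infty)$ differ from the frozen-prefix classes used here. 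Your proposal, however, states these memberships without establishing them.
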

Next, we establish a measurable selection result in our framework.
\begin{lem}\label{measele}
Let $\Theta$ be measurable. Moreover, let the cost function $\ell$ satisfy Assumption~\ref{L}. Then, for any probability measure $\mu$ on $\bigl([0,1]\times\mathbb{R}^d\times\mathbb{R}^d\times\mathbb{R}^{d^2}\times\mathbb{R}^{\mathbb{N}},
\mathcal{B}([0,1]\times\mathbb{R}^d\times\mathbb{R}^d\times\mathbb{R}^{d^2}\times\mathbb{R}^{\mathbb{N}})\bigr)$, the following hold:
\begin{itemize}
\item[(1)] The map $(t,x,b,c,v)\mapsto\lambda(t,x)$ is measurable with respect to the $\mu$-completion of $\mathcal{B}([0,1]\times\mathbb{R}^d\times\mathbb{R}^d\times\mathbb{R}^{d^2}\times\mathbb{R}^{\mathbb{N}})$.
\item[(2)] For any $\varepsilon>0$, there exists a family of probability measures
$\{\overline{\mathbb{Q}}^{\varepsilon,t,x,b,c,v}\}$ indexed by $(t,x,b,c,v)\in[0,1]\times\mathbb{R}^d\times\mathbb{R}^d\times\mathbb{R}^{d^2}\times\mathbb{R}^{\mathbb{N}}$ belonging to $\overline{\mathfrak{P}}_{\Theta}^{t,x,b,c,v}$ such that $(t,x,b,c,v)\mapsto\overline{\mathbb{Q}}^{\varepsilon,t,x,b,c,v}$ is measurable with respect to the same completion and 
\begin{equation*}
\mathbb{E}^{\overline{\mathbb{Q}}^{\varepsilon,t,x,b,c,v}}\Bigl[\int_{t}^{1}
\overline{\ell}(s,\overline{X}_s,\overline{b}_s,\overline{c}_s,\overline{v}_s)\,\mathrm{d}s+\lambda_1(\overline{X}_1)\Bigr]<\lambda(t,x)+\varepsilon.
\end{equation*}
\end{itemize}
\end{lem}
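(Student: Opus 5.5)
The plan is the standard analytic-set route of Neufeld--Nutz and El Karoui--Tan. Lemma~\ref{measele} is a measurable-selection statement of Jankov--von Neumann type, so it suffices to show two things: that the constraint graph is analytic, and that the objective is Borel, or at least lower semianalytic, in all its arguments. Set
\[
E:=[0,1]\times\mathbb{R}^d\times\mathbb{R}^d\times\mathbb{R}^{d^2}\times\mathbb{R}^{\mathbb{N}}
\]
and consider the graph
\[
\mathcal{G}:=\bigl\{(t,x,b,c,v,\overline{\mathbb{P}})\in E\times\mathcal{P}(\overline{\Omega})\;\big|\;\overline{\mathbb{P}}\in\overline{\mathfrak{P}}_{\Theta}^{t,x,b,c,v}\bigr\}.
\]
On $E\times\mathcal{P}(\overline{\Omega})$ define
\[
\Phi(t,\overline{\mathbb{P}}):=\mathbb{E}^{\overline{\mathbb{P}}}\Bigl[\int_t^1\overline{\ell}(s,\overline{X}_s,\overline{b}_s,\overline{c}_s,\overline{v}_s)\,\mathrm{d}s+\lambda_1(\overline{X}_1)\Bigr].
\]
By \eqref{liftlambda}, $\lambda(t,x)=\inf\{\Phi(t,\overline{\mathbb{P}}):(t,x,b,c,v,\overline{\mathbb{P}})\in\mathcal{G}\}$. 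Note that \eqref{liftlambda} was derived under closedness of $\Theta$; for merely measurable $\Theta$ I would take the right-hand side directly as the object being selected.

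\textbf{Step 1: $\mathcal{G}$ is Borel (hence analytic).} The set $\overline{\mathfrak{P}}_{sem}^{ac}$ is Borel in $\mathcal{P}(\overline{\Omega})$. To see this, I would transfer the argument of \cite[Theorem~2.5]{NeufeldNutz2014SPA} to the enlarged space. The semimartingale and absolute-continuity properties, together with the identifications of $\overline{B},\overline{C},\overline{V}^i$, can be written as countably many conditions. Examples are the martingale property of $\overline{M}^c$, of $\overline{M}^c(\overline{M}^c)^\top-\overline{C}$, and of $\sum g_i(\Delta\overline{X})-\overline{V}^i$, after localisation along a countable family of stopping times. Each condition has the form $\mathbb{E}^{\overline{\mathbb{P}}}[\xi]=0$ for Borel $\xi$, so each defines a Borel set of measures.

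The remaining conditions are handled one at a time:
\begin{itemize}
\item The constraint $(\overline{b},\overline{c},\overline{v})\in\overline{\varphi}(\Theta)$ $\overline{\mathbb{P}}\times\mathrm{d}t$-a.s. reads $\mathbb{E}^{\overline{\mathbb{P}}}\bigl[\int_t^1\mathbf{1}_{\overline{\varphi}(\Theta)^c}(\overline{b}_s,\overline{c}_s,\overline{v}_s)\,\mathrm{d}s\bigr]=0$.
\item Here $\overline{b},\overline{c},\overline{v}$ are Borel through the $\limsup$ definitions \eqref{bcv}. The image $\overline{\varphi}(\Theta)$ is Borel because $\overline{\varphi}$ is an injective Borel map (Lusin--Souslin).
\item The map $(t,\overline{\mathbb{P}})\mapsto$ this expectation is Borel.
\item The freezing condition $\overline{\mathbb{P}}[(\overline{X}_s,\overline{B}_s,\overline{C}_s,\overline{V}_s)=(x,b,c,v),\,s\le t]=1$ is Borel in $(t,x,b,c,v,\overline{\mathbb{P}})$ by right-continuity: it suffices to check rational $s\le t$ together with $s=t$.
\end{itemize}

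\textbf{Step 2: $\Phi$ is Borel.} Assumption~\ref{L} gives joint measurability of $\overline{\ell}$, and the integrand is nonnegative apart from the bounded term $\lambda_1$. Monotone class and monotone convergence then show that $(t,\overline{\mathbb{P}})\mapsto\Phi(t,\overline{\mathbb{P}})$ is Borel, so in particular lower semianalytic.

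\textbf{Step 3: apply the selection theorem.} \cite[Proposition~7.47]{BertsekasShreve} gives that the infimum of a lower semianalytic function over the sections of an analytic set is lower semianalytic. Thus $(t,x,b,c,v)\mapsto\lambda(t,x)$ is lower semianalytic, hence universally measurable, which proves (1). Proposition~7.50 of the same reference then yields, for each $\varepsilon>0$, a universally measurable selector $(t,x,b,c,v)\mapsto\overline{\mathbb{Q}}^{\varepsilon,t,x,b,c,v}\in\mathcal{G}_{(t,x,b,c,v)}$ on the projection of $\mathcal{G}$. This selector satisfies $\Phi<\lambda+\varepsilon$ where $\lambda>-\infty$; this holds automatically, since $\lambda\ge-\|\lambda_1\|_\infty$. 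Universally measurable maps are measurable for every $\mu$-completion, which gives (2). Off the projection, i.e.\ where the section is empty, $\lambda=+\infty$ and the inequality is vacuous. If the section is nonempty for all parameters (e.g.\ by the constant-characteristic construction), the family is indexed by all of $E$.

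\textbf{Main obstacle.} The hardest step is Step~1, specifically the Borel measurability of $\overline{\mathfrak{P}}_{sem}^{ac}$ on $\overline{\Omega}$. It requires the compensator of $\mu^{\overline{X}}$ to be identified through the countable family $g_i$, using the law-determining property of $\mathcal{C}^+(\mathbb{R}^d)$. It also requires the raw-filtration versus augmented-filtration issue to be handled as in \cite[Proposition~2.2]{NeufeldNutz2014SPA}. I would first try to reduce this step wholesale to \cite[Theorems~2.5--2.6]{NeufeldNutz2014SPA} applied componentwise.
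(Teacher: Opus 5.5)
Your proposal is correct and follows essentially the paper's route. The paper also writes $\overline{\mathfrak{P}}_{\Theta}^{t,x,b,c,v}$ as the intersection of $\overline{\mathfrak{P}}_{sem}^{ac}$ (Borel by Neufeld--Nutz), the freezing set, and the constraint set $\{\mathbb{E}^{\overline{\mathbb{P}}}[\int_t^1\mathbf{1}_{(\overline{b}_s,\overline{c}_s,\overline{v}_s)\notin\overline{\varphi}(\Theta)}\,\mathrm{d}s]=0\}$; it then invokes a packaged measurable selection theorem (Tan--Touzi, Theorem~A.1), where you appeal directly to Bertsekas--Shreve, Propositions~7.47 and~7.50.

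Your version is in fact more careful on two points. You verify Borel measurability of the graph jointly in $(t,x,b,c,v,\overline{\mathbb{P}})$, which any such selection theorem requires, whereas the paper only checks each fixed section. You also treat the freezing set as Borel via right-continuity; the paper calls it closed, which fails in general for the $J_1$-component $\overline{X}$, since jumps at times $t+1/n$ can converge to a jump at time $t$.
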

\begin{proof}
Note that
\begin{align*}
\overline{\mathfrak{P}}_{\Theta}^{t,x,b,c,v}
=\overline{\mathfrak{P}}_{sem}^{ac}
&\cap\bigl\{\overline{\mathbb{P}}\in\mathcal{P}(\overline{\Omega})\big|\overline{\mathbb{P}}[(\overline{X}_s,\overline{B}_s,\overline{C}_s,\{\overline{V}_s^i\}_{i\in\mathbb{N}})=(x,b,c,v),\, 0\le s\le t]=1\bigr\}\\
&\quad\cap\Bigl\{\overline{\mathbb{P}}\in\overline{\mathfrak{P}}_{sem}^{ac}\mid
\mathbb{E}^{\overline{\mathbb{P}}}\Bigl[\int_t^1\mathbf{1}_{(\overline{b}_s,\overline{c}_s,\overline{v}_s)\notin\overline{\varphi}(\Theta)}\,\mathrm{d}s\Bigr]=0\Bigr\}.
\end{align*}
The Borel measurability of the first and third sets on the right-hand side follows from \cite[Theorem~2.6]{NeufeldNutz2014SPA} and \cite[Corollary~2.7]{NeufeldNutz2014SPA}, respectively, and the second set is closed and therefore Borel. Thus, $\overline{\mathfrak{P}}_{\Theta}^{t,x,b,c,v}$ is Borel as an intersection of Borel sets. Set $E=[0,1]\times\mathbb{R}^d\times\mathbb{R}^d\times\mathbb{R}^{d^2}\times\mathbb{R}^{\mathbb{N}}$ and $F=\mathcal{P}(\overline{\Omega})$. Then the result follows easily from \cite[Theorem~A.1]{TanTouzi2013AOP}.
\end{proof}
We now prove the dynamic programming principle.
\begin{proof}[Proof of Theorem \ref{DPP}]
Let $\overline{\tau}$ be an $\overline{\mathbb{F}}$-stopping time taking values in $[t,1]$. We proceed in two steps.
\begin{itemize}
\item[(1)] For $\overline{\mathbb{P}}\in\overline{\mathfrak{P}}_{\Theta}^{t,x,b,c,v}$, let $(\overline{\mathbb{P}}_{\overline{\omega}})_{\overline{\omega}\in\overline{\Omega}}$ be a family of regular conditional probability distributions of $\overline{\mathbb{P}}$ given $\overline{\mathcal{F}}_{\overline{\tau}}$. By the tower property and Lemma~\ref{conditioningandposting} (1),
\begin{align*}
\lambda(t,x)
&=\inf_{\overline{\mathbb{P}}\in\overline{\mathfrak{P}}_{\Theta}^{t,x,b,c,v}}\mathbb{E}^{\overline{\mathbb{P}}}
\Bigl[\int_{t}^{\overline{\tau}}\overline{\ell}(s,\overline{X}_s,\overline{b}_s,\overline{c}_s,\overline{v}_s)\,\mathrm{d}s
+\int_{\overline{\tau}}^{1} \overline{\ell}(s,\overline{X}_s,\overline{b}_s,\overline{c}_s,\overline{v}_s)\,\mathrm{d}s
+\lambda_1(\overline{X}_1)\Bigr]\\
&=\inf_{\overline{\mathbb{P}}\in\overline{\mathfrak{P}}_{\Theta}^{t,x,b,c,v}}\mathbb{E}^{\overline{\mathbb{P}}} 
\Bigl[\int_{t}^{\overline{\tau}}\overline{\ell}(s,\overline{X}_s,\overline{b}_s,\overline{c}_s,\overline{v}_s)\,\mathrm{d}s
+\mathbb{E}^{\overline{\mathbb{P}}^{\overline{\tau},\overline{\omega}}}\Bigl\{\int_{\overline{\tau}}^1 \overline{\ell}(s,\overline{X}_s,\overline{b}_s,\overline{c}_s,\overline{v}_s)\,\mathrm{d}s
+\lambda_1(\overline{X}_1)\Bigr\}\Bigr]\\
&\geq\inf_{\overline{\mathbb{P}}\in\overline{\mathfrak{P}}_{\Theta}^{t,x,b,c,v}}
\mathbb{E}^{\overline{\mathbb{P}}}\Bigl[\int_{t}^{\overline{\tau}}\overline{\ell}(s,\overline{X}_s,\overline{b}_s,\overline{c}_s,\overline{v}_s)\,\mathrm{d}s+ \lambda (\overline{\tau},\overline{X}_{\overline{\tau}}) \Bigr],
\end{align*}
where the last inequality uses that $\overline{\mathbb{P}}^{\overline{\tau},\overline{\omega}} \in \overline{\mathfrak{P}}_{\Theta}^{\overline{\tau}(\overline{\omega}),\overline{\omega}_{\overline{\tau}(\overline{\omega})}}$ and the definition of $\lambda(t,x)$.
\item[(2)] Fix $\varepsilon>0$ and let $\{\overline{\mathbb{Q}}^{\varepsilon,t,x,b,c,v}\}$ be the family provided by Lemma~\ref{measele}. For each $\overline{\omega}$, set
\[
\overline{\mathbb{Q}}_{\overline{\omega}}^{\varepsilon} := \overline{\mathbb{Q}}^{\,\varepsilon,\,\overline{\tau}(\overline{\omega}),\,\overline{X}_{\overline{\tau}(\overline{\omega})},\,\overline{B}_{\overline{\tau}(\overline{\omega})},
\,\overline{C}_{\overline{\tau}(\overline{\omega})},\,\{\overline{V}_{\overline{\tau}(\overline{\omega})}^{i}\}_{i\in\mathbb{N}}}.
\]
Note that $\overline{\omega}\mapsto\overline{\mathbb{Q}}_{\overline{\omega}}^{\varepsilon}$ is $\overline{\mathcal{F}}_{\overline{\tau}}$-measurable. For any $\overline{\mathbb{P}}\in\overline{\mathfrak{P}}_{\Theta}^{t,x,b,c,v}$, Lemma~\ref{conditioningandposting} (2) yields $\overline{\mathbb{P}}\otimes_{\overline{\tau}}\overline{\mathbb{Q}}^{\varepsilon}\in\overline{\mathfrak{P}}_{\Theta}^{t,x,b,c,v}$. Moreover,
\begin{align*}
&\mathbb{E}^{\overline{\mathbb{P}}\otimes_{\overline{\tau}}\overline{\mathbb{Q}}^{\varepsilon}} 
\Bigl[\int_{t}^{1}\overline{\ell}(s,\overline{X}_s,\overline{b}_s,\overline{c}_s,\overline{v}_s)\,\mathrm{d}s
+\lambda_1(\overline{X}_1) \Bigr]\\
&\qquad = \mathbb{E}^{\overline{\mathbb{P}}}\Bigl[\int_{t}^{\overline{\tau}}\overline{\ell}(s,\overline{X}_s,\overline{b}_s,\overline{c}_s,\overline{v}_s)\,\mathrm{d}s
+ \mathbb{E}^{\overline{\mathbb{Q}}_{\overline{\omega}}^{\varepsilon}}\Bigl\{\int_{\overline{\tau}}^1 \overline{\ell}(s,\overline{X}_s,\overline{b}_s,\overline{c}_s,\overline{v}_s)\,\mathrm{d}s+\lambda_1(\overline{X}_1)\Bigr\}\Bigr] \\
&\qquad \le \mathbb{E}^{\overline{\mathbb{P}}}\Bigl[\int_{t}^{\overline{\tau}}\overline{\ell}(s,\overline{X}_s,\overline{b}_s,\overline{c}_s,\overline{v}_s)\,\mathrm{d}s
+\lambda(\overline{\tau},\overline{X}_{\overline{\tau}}) \Bigr] + \varepsilon.
\end{align*}
\end{itemize}
Since $\overline{\mathbb{P}}\in\overline{\mathfrak{P}}_{\Theta}^{t,x,b,c,v}$ and $\varepsilon>0$ were arbitrary, the reverse inequality follows. This completes the proof.
\end{proof}
The HJB equation is the infinitesimal version of the above dynamic programming principle. Recall the fixed continuous truncation function $h$ and define, for all $(t,x,p,\Gamma,f)\in [0,1]\times\mathbb{R}^d\times\mathbb{R}^d\times\mathbb{S}_{+}^{d}\times C_b^2(\mathbb{R}^d)$,
\begin{equation}\label{H}
H(t,x,p,\Gamma,f(\cdot)):=\inf_{(b,c,F)\in\Theta}\Bigl\{b\cdot p+\frac{1}{2}c\cdot \Gamma+\int_{\mathbb{R}^d}\bigl[f(z)-f(0)-D_{x}f(0)\cdot h(z)\bigr]\,F(\mathrm{d}z)+\ell(t,x,b,c,F)\Bigr\}.
\end{equation}
We next introduce the following continuity assumption.
\begin{ass}\label{ellcontinuous}
The cost function $\ell:[0,1]\times\mathbb{R}^d\times\Theta\to[0,+\infty)$ is nonnegative, jointly continuous in $(t,x,\theta)$, uniformly continuous in $(t,x)$ uniformly in $\theta$, and convex in $\theta$.
\end{ass}
\begin{prop}\label{HJBv}
Let $\Theta \subseteq \mathbb{R}^d \times \mathbb{S}_+^d \times \mathcal{L}$ be closed, convex, and satisfy Assumptions \ref{B} and \ref{J}. Moreover, let $\ell$ satisfy Assumption~\ref{ellcontinuous}. Then the function 
\begin{equation*}
H: [0,1]\times\mathbb{R}^d\times\mathbb{R}^d\times\mathbb{S}_{+}^{d}\times C_b^2(\mathbb{R}^d) \to \mathbb{R}
\end{equation*} 
is continuous.
\end{prop}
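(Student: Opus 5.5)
The plan is to write $H$ as the infimum over the compact set $\Theta$ of a function $\Phi(t,x,p,\Gamma,f;\theta)$ that is jointly continuous in all arguments, and then to apply the standard fact that the infimum of a jointly continuous function over a compact parameter set is continuous in the remaining variables. By Remark~\ref{Thetacompact}, Assumptions~\ref{B} and~\ref{J} together with closedness and convexity make $\Theta$ compact in $\mathbb{R}^d\times\mathbb{S}_+^d\times\mathcal{L}$ with the product topology described in Section~\ref{2}. The argument then has two parts.

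First, continuity of the lower bound. Let $(t_n,x_n,p_n,\Gamma_n,f_n)\to(t,x,p,\Gamma,f)$. For each $n$ I would pick a minimiser $\theta_n\in\Theta$; it exists because $\Phi$ is continuous in $\theta$ (shown below) and $\Theta$ is compact. Passing to a subsequence, $\theta_n\to\theta^*\in\Theta$. Joint continuity of $\Phi$ then gives $\liminf H_n\ge \Phi(\cdot;\theta^*)\ge H$.

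Second, the upper bound. For any fixed $\theta$, $\limsup H_n\le\lim\Phi_n(\theta)=\Phi(\theta)$, and taking the infimum over $\theta$ gives $\limsup H_n\le H$. Note that the uniform continuity of $\ell$ in $(t,x)$ uniformly in $\theta$ would even give the $\ell$-part of the convergence directly. Finiteness of $H$ holds because $\ell\ge0$ and the linear terms are bounded on $\Theta$.

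The main step, and the only real obstacle, is joint continuity of the nonlocal term
\[
I(f,F):=\int_{\mathbb{R}^d}\bigl[f(z)-f(0)-D_xf(0)\cdot h(z)\bigr]F(\mathrm{d}z)
\]
in $(f,F)$. The topology on $C_b^2$ is taken to be the norm $\|f\|_{C^2_b}$. The integrand $g_f(z)$ is not bounded and does not vanish near $0$, so the weak topology on $\mathcal{L}$ does not apply directly. I would split with a cutoff $\chi_\delta$ that is continuous, equals $1$ on $\{|z|\le\delta\}$ and vanishes outside $\{|z|\le 2\delta\}$, with $\delta$ small enough that $h(z)=z$ there.

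Near the origin, Taylor's formula gives $|g_f(z)|\le \tfrac12\|D^2f\|_\infty|z|^2$ for $|z|\le2\delta$. So $\sup_{F\in\Theta}\int\chi_\delta|g_f|\,\mathrm{d}F\le C\sup_{F\in\Theta}\int_{|z|\le2\delta}|z|^2F(\mathrm{d}z)$, which tends to $0$ as $\delta\downarrow0$ by Assumption~\ref{J}, uniformly over a bounded set of $f$.

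Away from the origin, $(1-\chi_\delta)g_f$ is bounded, continuous and vanishes near $0$. Hence $F_n\to F$ in $\mathcal{L}$ gives convergence of $\int(1-\chi_\delta)g_f\,\mathrm{d}F_n$. The variation in $f$ is controlled by $\|(1-\chi_\delta)(g_{f_n}-g_f)\|_\infty\le C\|f_n-f\|_{C^2_b}$ times $\sup_{F\in\Theta}F(\{|z|\ge\delta\})$. This supremum is finite, since Assumption~\ref{B} bounds $\int(|z|^2\wedge|z|)\,\mathrm{d}F$, which dominates $(\delta^2\wedge\delta)F(\{|z|\ge\delta\})$.

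Combining the two regions with an $\varepsilon/3$ argument gives joint continuity of $I$ on $C_b^2\times\Theta$. The terms $b\cdot p$ and $\tfrac12 c\cdot\Gamma$ are trivially jointly continuous, and $\ell$ is jointly continuous by Assumption~\ref{ellcontinuous}. So $\Phi$ is jointly continuous and the compactness argument above concludes the proof.
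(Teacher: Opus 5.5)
Your proof is correct, but it takes a different route from the paper's. The paper cites \cite[Lemma~5.7]{NeufeldNutz2017TAMS} for $\ell\equiv0$ and remarks that uniform continuity of $\ell$ in $(t,x)$ uniformly in $\theta$ extends it. The mechanism there is equicontinuity of the family $\{\Phi(\cdot;\theta)\}_{\theta\in\Theta}$ rather than compactness. One has $|f(z)-f(0)-D_xf(0)\cdot h(z)|\le C_h\|f\|_{C^2_b}(|z|^2\wedge1)$ for a constant $C_h$ depending only on $h$, and Assumption~\ref{B} bounds $|b|$, $|c|$ and $\int(|z|^2\wedge1)\,F(\mathrm{d}z)$ uniformly on $\Theta$. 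Hence $\Phi(\cdot;\theta)$ is Lipschitz in $(p,\Gamma,f)$ and uniformly continuous in $(t,x)$, both uniformly in $\theta$. The estimate $|H(\xi)-H(\xi')|\le\sup_{\theta\in\Theta}|\Phi(\xi;\theta)-\Phi(\xi';\theta)|$ then gives continuity with an explicit modulus. That argument never varies $F$, so it needs neither compactness of $\Theta$, nor Assumption~\ref{J}, nor any continuity of $\ell$ in $\theta$.

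Your Berge-type argument needs joint continuity of $\Phi$ in $\theta$. It therefore genuinely uses Assumption~\ref{J} near the origin: in the topology on $\mathcal{L}$, $n^2\delta_{e_1/n}\to0$ while the nonlocal term tends to $\frac12\partial_{11}f(0)$. It also relies on the compactness from Remark~\ref{Thetacompact}. In exchange, it uses only the joint continuity of $\ell$, not the uniformity in $\theta$, and it gives attainment of the infimum defining $H$ as a by-product. So the two proofs draw on complementary parts of Assumption~\ref{ellcontinuous}.
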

\begin{proof}
If $\ell$ is identically zero, the continuity of $H$ is discussed in \cite[Lemma~5.7]{NeufeldNutz2017TAMS}; the uniform continuity of $\ell$ in $(t,x)$ uniformly in $\theta$ extends that argument immediately.
\end{proof}
Next, we present the viscosity solution theorem. When the cost function is identically zero, this result already appeared in~\cite[Corollary~4.2]{Kuhn2019ALEA} (see also~\cite[Theorem~2.5]{NeufeldNutz2017TAMS} and~\cite{HuPeng2009}).

\begin{thm}\label{viscosity}
Let $\Theta \subseteq \mathbb{R}^d \times \mathbb{S}_+^d \times \mathcal{L}$ be closed, convex, and satisfy Assumptions \ref{B} and \ref{J}. Moreover, let $\ell$ satisfy Assumption~\ref{ellcontinuous}. Then $\lambda$ is the unique viscosity solution of the HJB equation
\begin{equation}\label{HJBE}
\begin{cases}
-\partial_{t}\lambda(t,x)-H\bigl(t,x,D_x\lambda(t,x),D_{xx}^2\lambda(t,x),\lambda(t,x+\cdot)\bigr)=0,\\
\lambda(1,x)=\lambda_1(x).
\end{cases}
\end{equation}
\end{thm}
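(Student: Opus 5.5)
The plan has three parts: show that $\lambda$ is a viscosity solution, show that a comparison principle holds, and conclude uniqueness from these. The solution property comes from the dynamic programming principle of Theorem~\ref{DPP}. Comparison is imported from the existing literature on nonlocal HJB equations, using the continuity of $H$ from Proposition~\ref{HJBv}.

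\textbf{Regularity.} I will first show that $\lambda$ is bounded and continuous on $[0,1]\times\mathbb{R}^d$.

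Boundedness holds because $\lambda_1\in C_b$, $\ell\ge0$, and $\Theta$ is compact (Remark~\ref{Thetacompact}). By uniform continuity of $\ell$, $\ell$ is bounded on $[0,1]\times\mathbb{R}^d\times\Theta$.

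For continuity in $x$, I translate the paths: if $\mathbb{P}\in\mathfrak{P}^{t,x}_\Theta$, then the law of $X+(y-x)$ lies in $\mathfrak{P}^{t,y}_\Theta$ with the same characteristics. The costs therefore differ by at most
\[
\sup_\theta|\ell(s,x',\theta)-\ell(s,x'+y-x,\theta)|+\mathbb{E}|\lambda_1(X_1)-\lambda_1(X_1+y-x)|.
\]
The first term is small by uniform continuity of $\ell$. The second is small uniformly in $\mathbb{P}$, since Assumption~\ref{B} makes the family of laws of $X_1$ tight (uniformly bounded characteristics), so the uniform continuity of $\lambda_1$ on compacts suffices.

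For continuity in $t$, I apply the DPP with deterministic $\overline\tau=t'$. The running cost over $[t,t']$ is $O(|t'-t|)$. Moreover $\mathbb{E}|X_{t'}-x|\to0$ uniformly, again by the bounds in Assumption~\ref{B}; the small-jump part is controlled by Assumption~\ref{J} together with the bound on $\int(|z|^2\wedge|z|)F(\mathrm{d}z)$.

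\textbf{Sub- and supersolution property.} Take a test function $\phi\in C^{1,2}_b$ touching $\lambda$ from above at $(t_0,x_0)$, with $t_0<1$.

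\emph{Subsolution.} Fix a constant $\theta=(b,c,F)\in\Theta$ and let $\mathbb{P}$ be the law of the Lévy process with triplet $\theta$, frozen at $x_0$ on $[0,t_0]$; it lies in $\mathfrak{P}^{t_0,x_0}_\Theta$. Applying the DPP with $\overline\tau=t_0+h$ gives
\[
\phi(t_0,x_0)\le \mathbb{E}\Bigl[\int_{t_0}^{t_0+h}\ell\,\mathrm{d}s+\phi(t_0+h,X_{t_0+h})\Bigr].
\]
I expand $\phi$ by Itô's formula for semimartingales, divide by $h$ and let $h\to0$, using continuity of $\ell$. This yields
\[
-\partial_t\phi-\Bigl[b\cdot D\phi+\tfrac12 c\cdot D^2\phi+\int\bigl(\phi(x_0+z)-\phi(x_0)-D\phi\cdot h(z)\bigr)F(\mathrm{d}z)+\ell\Bigr]\le0.
\]
Taking the infimum over $\theta$ gives the subsolution inequality with $H$, where the nonlocal term is evaluated on $\phi(t_0,x_0+\cdot)$, which is legitimate since $\phi\ge\lambda$ globally.

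\emph{Supersolution.} Argue by contradiction: suppose $-\partial_t\phi-H<-\delta$ at the touching point $(t_0,x_0)$, with $\phi\le\lambda$. By continuity of $H$ (Proposition~\ref{HJBv}) this strict inequality persists on a neighbourhood. I take the exit time $\tau$ of that neighbourhood intersected with $t_0+h$, and use the DPP to pick $\varepsilon h$-optimal $\overline{\mathbb{P}}$. Itô's formula applied with $\overline{\mathbb{P}}$'s own characteristics, each of which is $\ge$ the infimum defining $H$, gives $\lambda(t_0,x_0)\ge\phi(t_0,x_0)+\delta\,\mathbb{E}[\tau-t_0]-\varepsilon h$.

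The main obstacle is the lower bound $\mathbb{E}[\tau-t_0]\ge ch$ uniformly in $\mathbb{P}$. This follows from Assumption~\ref{B}, since the exit probability from a ball in time $h$ is $O(h)$ uniformly over $\Theta$. Jumps outside the neighbourhood are handled by using the global test function in the nonlocal term, together with the usual strict-maximum perturbation $\phi+|x-x_0|^4+|t-t_0|^2$ localized so that it remains bounded. The terminal condition $\lambda(1,\cdot)=\lambda_1$ is immediate from the definition, and continuity up to $t=1$ follows from the time-continuity estimate above.

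\textbf{Uniqueness.} Uniqueness follows from a comparison principle for bounded viscosity sub- and supersolutions of \eqref{HJBE}. I would invoke the comparison result for nonlocal HJB equations with a bounded Lévy kernel family, as in \cite[Corollary~4.2]{Kuhn2019ALEA}, \cite{NeufeldNutz2017TAMS} and \cite{HuPeng2009}, which covers the case $\ell\equiv0$. Their doubling-of-variables proof carries over with $\ell$ present: the extra term $\ell(t,x,\theta)-\ell(s,y,\theta)$ is bounded by the modulus of continuity of $\ell$ in $(t,x)$, uniform in $\theta$ (Assumption~\ref{ellcontinuous}). That is exactly the property which makes $H$ satisfy the structural continuity conditions required there.
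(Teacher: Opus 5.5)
Your argument follows the paper's route. The viscosity property is derived from the dynamic programming principle (Theorem~\ref{DPP}) via It\^o's formula, with a constant control $\theta\in\Theta$ for the subsolution inequality and near-optimal measures for the supersolution inequality. Uniqueness is imported from the nonlocal comparison principle of \cite{NeufeldNutz2017TAMS,HuPeng2009,Kuhn2019ALEA}, the cost adding only a term controlled by the modulus of continuity of $\ell$ in $(t,x)$; the paper makes this step precise by checking (C1)--(C9) for the truncated operator $H^{\kappa}$. Your differences are minor or improvements:
\begin{itemize}
\item you obtain the subsolution inequality by a direct $h\to0$ limit rather than by contradiction with a strict-maximum perturbation;
\item you actually carry out the supersolution step (continuity of $H$ plus the uniform bound $\mathbb{E}[\tau-t_0]\ge ch$), which the paper only calls symmetric;
\item you prove continuity of $\lambda$, which the paper does not address.
\end{itemize}

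One step is wrong as stated. Uniform continuity of $\ell$ in $(t,x)$, uniformly in $\theta$, does not make $\ell$ bounded on $[0,1]\times\mathbb{R}^d\times\Theta$. Compactness of $[0,1]\times\Theta$ bounds $\ell(t,0,\theta)$, and the uniform modulus then gives only $\ell(t,x,\theta)\le C(1+|x|)$. For instance, $\ell(t,x,\theta)=|x|$ satisfies Assumption~\ref{ellcontinuous}. Then the uniform moment bounds from Assumption~\ref{B} give $\lambda(t,x)\ge(1-t)|x|-C$, so $\lambda$ is unbounded.

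Consequently the comparison principle for bounded sub- and supersolutions that you invoke does not apply directly. The paper invokes the same bounded-class comparison, since its $H^{\kappa}$ acts on $SC_b(\mathbb{R}^d)$. Your continuity estimates survive with constants that are only locally uniform in $x$. For uniqueness, however, you must do one of two things:
\begin{itemize}
\item add boundedness of $\ell$ as a hypothesis; or
\item prove comparison in the class of functions of linear growth. This is feasible, since Assumption~\ref{B} bounds $\int_{\{|z|>1\}}|z|\,F(\mathrm{d}z)$ uniformly on $\Theta$.
\end{itemize}
The paper leaves this point unaddressed as well.
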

\begin{proof}
The proof that $\lambda$ is a viscosity solution is similar to that of Theorem~4.2 in~\cite{TanTouzi2013AOP}, adapted to our setting.

\noindent\textit{Subsolution property:} Let $(t_0, x_0)\in[0,1)\times\mathbb{R}^d$ and let $\phi\in C_c^{\infty}([0,1)\times\mathbb{R}^d)$ be such that
\[
0 = (\lambda-\phi) (t_0, x_0) > (\lambda- \phi) (t,x),\qquad \forall(t,x) \neq(t_0, x_0).
\]
By adding $\varepsilon (|t-t_0|^2 + |x-x_0|^4)$ to $\phi(t,x)$, we may assume without loss of generality that
\begin{equation}\label{phimaxepsilon}
\phi(t,x) \ge\lambda(t,x) + \varepsilon \bigl(|t-t_0|^2 + |x-x_0|^4 \bigr).
\end{equation}
Suppose, for contradiction, that
\begin{equation*}
- \partial_t \phi(t_0, x_0) - H\bigl(t_0,x_0,D_x \phi(t_0,x_0), D^2_{xx} \phi(t_0,x_0), \phi(t_0,x_0+\cdot) \bigr) > 0.
\end{equation*}
By the definition of $H$, there exist $k > 0$ and $(b,c,F)\in \Theta$ such that
\begin{align*}
-\partial_t \phi(t, x)-b \cdot D_x \phi(t, x)-\frac{1}{2} c \cdot D_{xx}^2 \phi(t, x)- 
\int_{\mathbb{R}^d} \bigl[\phi(t,x+z)-\phi(t,x)-D_x\phi(t,x)\cdot h(z) \bigr] F(\mathrm{d}z) \\
- \ell(t, x, b, c, F)>0\qquad\forall (t, x) \in B_k(t_0, x_0),
\end{align*}
where $B_k(t_0, x_0) := \{(t, x) \in [0, 1) \times \mathbb{R}^d : |(t, x) - (t_0, x_0)| \leq k\}$.
Define $\overline{\tau}:=\inf\{t\in [t_0,1]:(t,\overline{X}_t)\notin B_k(t_0, x_0)\}$. Applying It\^o's formula to $\phi(\overline{\tau}, \overline{X}_{\overline{\tau}})$, we obtain that
\begin{align*}
\phi(\overline{\tau}, \overline{X}_{\overline{\tau}}) - \phi(t_0, x_0)
&= \int_{t_0}^{\overline{\tau}} \partial_t \phi(s,\overline{X}_s)\,\mathrm{d}s + \int_{t_0}^{\overline{\tau}} D_x \phi(s,\overline{X}_{s-}) \, \mathrm{d}\overline{M}_s^{c} \\
&\quad + \int_{t_0}^{\overline{\tau}} \overline{b}_s^{\overline{\mathbb{P}}}\cdot D_x \phi(s,\overline{X}_{s-}) \,\mathrm{d}s + \int_{t_0}^{\overline{\tau}} \frac{1}{2}\overline{c}_s^{\overline{\mathbb{P}}}\cdot D_{xx}^2 \phi(s, \overline{X}_{s-})\,\mathrm{d}s \\
&\quad + \int_{t_0}^{\overline{\tau}} \int_{\mathbb{R}^d}\Bigl[\phi(s,\overline{X}_{s-}+z)-\phi(s,\overline{X}_{s-})\Bigr]\,(\mu^{\overline{X}}-\nu^{\overline{\mathbb{P}}})(\mathrm{d}z,\mathrm{d}s) \\
&\quad + \int_{t_0}^{\overline{\tau}} \int_{\mathbb{R}^d}\Bigl[\phi(s,\overline{X}_{s-}+z)-\phi(s,\overline{X}_{s-})-D_x\phi(s,\overline{X}_{s-})\cdot h(z)\Bigr]\,\overline{F}_s^{\overline{\mathbb{P}}}(\mathrm{d}z)\,\mathrm{d}s,
\end{align*}
where $\overline{M}^c$ is the continuous local martingale part of $\overline{X}$. Note that $\overline{X}$ has no fixed times of discontinuity under $\mathbb{P}$; see~\cite[Lemma~2.3]{LiuNeufeld2019TAMS}. Taking expectations and using the definition of $H$, we get
\begin{align*}
\lambda(t_0,x_0)=\phi(t_0,x_0) 
&\geq\inf_{\overline{\mathbb{P}}\in\overline{\mathfrak{P}}_{\Theta}^{t_0,x_0,0,0,0}}
\mathbb{E}^{\overline{\mathbb{P}}}\Bigl[\int_{t_0}^{\overline{\tau}}\overline{\ell}(s,\overline{X}_s,\overline{b}_s,\overline{c}_s,\overline{v}_s)\,\mathrm{d}s
+\phi(\overline{\tau},\overline{X}_{\overline{\tau}}) \Bigr] \\
&\geq\inf_{\overline{\mathbb{P}}\in\overline{\mathfrak{P}}_{\Theta}^{t_0,x_0,0,0,0}} 
\mathbb{E}^{\overline{\mathbb{P}}}\Bigl[\int_{t_0}^{\overline{\tau}}\overline{\ell}(s,\overline{X}_s,\overline{b}_s,\overline{c}_s,\overline{v}_s)\,\mathrm{d}s+
\lambda(\overline{\tau},\overline{X}_{\overline{\tau}})\Bigr] + \eta,
\end{align*}
where $\eta>0$ is a constant coming from \eqref{phimaxepsilon} and the definition of $\overline{\tau}$. This contradicts Theorem~\ref{DPP}.

\noindent\textit{Supersolution property:} The proof is symmetric to the subsolution case, noting that $-\partial_t \phi - H < 0$ forces the reverse inequality for every $(b,c,F)\in\Theta$ locally, and the contradiction with Theorem~\ref{DPP} follows by It\^o's formula.

\noindent\textit{Uniqueness:} Let $USC_b(\mathbb{R}^d)$ denote the set of bounded upper semicontinuous functions on $\mathbb{R}^d$, $LSC_b(\mathbb{R}^d)$ the bounded lower semicontinuous functions, and $SC_b(\mathbb{R}^d) := USC_b(\mathbb{R}^d) \cup LSC_b(\mathbb{R}^d)$.
For $\kappa>0$ and $(t,x,p,\Gamma,f,g)\in [0,1]\times\mathbb{R}^d\times\mathbb{R}^d\times\mathbb{S}_{+}^{d}\times SC_b(\mathbb{R}^d)\times C^2(\mathbb{R}^d)$, define
\begin{equation*}
\begin{aligned}
H^{\kappa}(t,x,p,\Gamma,f(\cdot),g(\cdot)):=\inf_{(b,c,F)\in\Theta}
\Bigl\{
&b\cdot p+\frac{1}{2}c\cdot\Gamma+\int_{|z|>\kappa}\bigl[f(z)-f(0)-D_{x}g(0)\cdot h(z)\bigr]\,F(\mathrm{d}z)\\
&\quad +\int_{|z|\leq\kappa}\bigl[g(z)-g(0)-D_{x}g(0)\cdot h(z)\bigr]\,F(\mathrm{d}z)+\ell(t,x,b,c,F)\Bigr\}.
\end{aligned}
\end{equation*}
One can verify that $H^{\kappa}$ satisfies conditions (C1)–(C9) in~\cite[Lemma~5.6]{NeufeldNutz2017TAMS} (see also~\cite{HuPeng2009}), and therefore the uniqueness of the viscosity solution follows from the comparison principle.
\end{proof}
\begin{rem}
When $\ell\equiv0$, the problem reduces to a sublinear expectation and the HJB equation~\eqref{HJBE} becomes a nonlinear Kolmogorov equation.
In this case, the time-reversed function $u^{\lambda_1}(t,x):=\lambda(1-t,x)$ induces a sublinear Markovian semigroup $(T_t)_{t\ge0}$ via $T_t f(x)=u^f(t,x)$; it satisfies the semigroup property $T_{s+t}=T_s T_t$ and, under mild conditions, the Feller property (see~\cite{DKN2020SPA}).
This control-theoretic formulation provides a unified foundation for $G$-Brownian motion~\cite{Peng2019}, nonlinear L\'{e}vy processes~\cite{NeufeldNutz2017TAMS,CriensNiemann2024SPA}, and path-dependent nonlinear semimartingales~\cite{CriensNiemann2023EJP}, and links optimal transport and model-free finance to the theory of nonlinear expectations.
\end{rem}
%\begin{rem}
%When the cost function $\ell$ is identically zero, the problem reduces to a sublinear expectation and the associated HJB equation becomes a standard nonlinear Kolmogorov equation. This control-theoretic formulation provides a unified foundation for $G$-Brownian motion~\cite{Peng2007SAA, Peng2008SPA, Peng2019}, nonlinear L\'{e}vy processes~\cite{NeufeldNutz2017TAMS, CriensNiemann2024SPA}, and path-dependent nonlinear semimartingales~\cite{CriensNiemann2023EJP}, while linking optimal transport and model-free finance to the theory of nonlinear expectations.
%\end{rem}
%
%\begin{rem}
%When $\ell\equiv0$, consider the time-reversed function $u^{\lambda_1}(t,x):=\lambda(1-t,x)$. For $t\in[0,1]$ and $x\in\mathbb{R}^d$, define the operator $T_t f(x):=u^{f}(t,x)$. Then $(T_t)$ satisfies the semigroup property $T_{s+t}=T_{s}T_{t}$ and is a sublinear Markovian semigroup; under mild conditions, it also possesses the Feller property. We refer to~\cite{DKN2020SPA} and the references therein for further details.
%\end{rem}

Using the uniqueness of viscosity solutions, we can characterise the semimartingale optimal transport problem completely via the HJB equation.
\begin{thm}
Let $\Theta \subseteq \mathbb{R}^d \times \mathbb{S}_+^d \times \mathcal{L}$ be closed, convex, and satisfy Assumptions \ref{B} and \ref{J}. Moreover, let $\ell$ satisfy Assumption~\ref{ellcontinuous}. Then
\begin{equation*}
V(\mu_0,\mu_1)=\sup_{\lambda_1\in C_b(\mathbb{R}^d)}\Bigl\{\int_{\mathbb{R}^d}\lambda(0,x)\,\mu_0(\mathrm{d}x)-\int_{\mathbb{R}^d}\lambda_1(x)\,\mu_1(\mathrm{d}x)\Big| \lambda \text{ solves } \eqref{HJBE} \text{ with } \lambda(1,\cdot)=\lambda_1\Bigr\}.
\end{equation*}
\end{thm}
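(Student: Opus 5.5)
The plan is to combine the Kantorovich duality of Theorem~\ref{D} with the viscosity characterisation of Theorem~\ref{viscosity}. Under~\eqref{definel}, the function $\lambda_0^{\lambda_1}$ from~\eqref{Rc} is exactly $\lambda(0,\cdot)$, where $\lambda$ is the dynamic value function with terminal datum $\lambda_1$. So the right-hand side of the claimed identity should reduce to $\mathcal{V}(\mu_0,\mu_1)$ in~\eqref{DP}. Theorem~\ref{D} then gives equality with $V(\mu_0,\mu_1)$.

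First, I check the hypotheses of Theorem~\ref{D}. Assumption~\ref{ellcontinuous} has to imply Assumption~\ref{L} for $L(t,\omega,\theta)=\ell(t,\omega_t,\theta)$.
\begin{itemize}
\item Nonnegativity and convexity in $\theta$ are immediate.
\item Joint measurability follows from the continuity of $\ell$ and the measurability of $(t,\omega)\mapsto\omega_t$ on $[0,1]\times\Omega$.
\item Joint lower semicontinuity in $(\omega,\theta)$ for fixed $t$ needs care, because $\omega\mapsto\omega_t$ is not $J_1$-continuous at paths jumping at $t$.
\end{itemize}
For the last point, I would either work with the lower semicontinuous envelope in $\omega$, or observe that only $\mathrm{d}t$-a.e.\ $t$ matters. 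If $\omega^n\to\omega$ in $J_1$, then $\omega^n_t\to\omega_t$ for all $t$ outside the countable jump set of $\omega$. Inside the proof of Theorem~\ref{JPlowsemicontinuous}, Balder's theorem is applied on $T=[0,1]\times[0,1]$, so pointwise convergence off a null set suffices. In the worst case, I would note that the integrand $t\mapsto\ell(t,\omega_t,\theta)$ is unchanged on a $\mathrm{d}t$-null set if $\omega_t$ is replaced by $\omega_{t-}$. I can then use $\liminf$ over the two one-sided limits, which is lower semicontinuous by continuity of $\ell$. This technicality is the main point to get right for the hypotheses.

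Second, I identify the dual objects. By~\eqref{Rc} with $L=\ell(t,X_t,\cdot)$, we have $\lambda_0^{\lambda_1}(x)=\lambda(0,x)$, because $\mathfrak{P}^{0,x}_\Theta=\mathfrak{P}_\Theta(\delta_x)$. By Theorem~\ref{viscosity}, for every $\lambda_1\in C_b(\mathbb{R}^d)$, this $\lambda$ is the unique viscosity solution of~\eqref{HJBE} with $\lambda(1,\cdot)=\lambda_1$. Hence the constraint ``$\lambda$ solves~\eqref{HJBE}'' in the supremum singles out exactly one function for each $\lambda_1$, namely the value function, and imposes no further restriction on $\lambda_1$. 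Integrability is not an issue: $\lambda_1$ is bounded and $\ell\ge0$, so $\lambda(0,\cdot)\ge-\|\lambda_1\|_\infty$, and~\eqref{intRc} guarantees that the $\mu_0$-integral is well defined (possibly $+\infty$).

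Third, I conclude. Substituting gives
\[
\sup_{\lambda_1}\Bigl\{\int\lambda(0,x)\,\mu_0(\mathrm{d}x)-\int\lambda_1\,\mathrm{d}\mu_1\Bigr\}=\mathcal{V}(\mu_0,\mu_1),
\]
and Theorem~\ref{D} gives $\mathcal{V}(\mu_0,\mu_1)=V(\mu_0,\mu_1)$. The only genuine obstacle is the verification of Assumption~\ref{L} discussed in the first step. Once that is handled, the rest is a direct combination of the two theorems.
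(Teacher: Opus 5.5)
Your proposal is correct and is exactly the argument the paper intends (the result is stated immediately after Theorem~\ref{viscosity} as a consequence of it, with no separate proof): $\mathfrak{P}^{0,x}_{\Theta}=\mathfrak{P}_{\Theta}(\delta_x)$ gives $\lambda_0^{\lambda_1}=\lambda(0,\cdot)$, uniqueness in Theorem~\ref{viscosity} means the HJB constraint selects exactly this function for each $\lambda_1$, and Theorem~\ref{D} finishes. Your concern that $\omega\mapsto\ell(t,\omega_t,\theta)$ is not $J_1$-lower semicontinuous at paths jumping at $t$ is legitimate (the paper passes over it), and the fix that works is your a.e.-in-$t$ one, i.e.\ running Balder's theorem in the proof of Theorem~\ref{JPlowsemicontinuous} with the state variable $z^{n,\overline{X}}_t(x)\in\mathbb{R}^d$, which converges off a null subset of $T=[0,1]\times[0,1]$; by contrast, the envelope $\min\{\ell(t,\omega_{t-},\theta),\ell(t,\omega_t,\theta)\}$ is lower semicontinuous but in general not convex in $\theta$ at jump times, so by itself it does not restore Assumption~\ref{L}.
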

For continuous semimartingales, we have the following result based on~\cite[Theorem~4.2]{TanTouzi2013AOP} (note that viscosity solutions may not be unique in this setting).
\begin{prop}
Let $\Theta\subseteq\mathbb{R}^d\times\mathbb{S}_+^d\times\{0\}$ be closed and convex, and let the cost function $\ell$ satisfy Assumptions \ref{ellcontinuous} and \ref{uL}. Then $\lambda$ is a viscosity solution of the HJB equation \eqref{HJBE}.
%, and the equality \eqref{V=HJB} holds.
\end{prop}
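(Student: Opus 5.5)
The plan follows the proof of Theorem~\ref{viscosity}, now for continuous semimartingales ($F\equiv0$) where $\Theta$ may be unbounded. Boundedness of $\Theta$ is replaced by the coercivity in Assumption~\ref{uL}. Three ingredients are needed: a dynamic programming principle, a finiteness/continuity property of the Hamiltonian near the test point, and the sub/supersolution verification via It\^o's formula. Uniqueness is not claimed, so no comparison principle is required.

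\textbf{Step 1: dynamic programming.} With $\Theta\subseteq\mathbb{R}^d\times\mathbb{S}_+^d\times\{0\}$ closed and convex, I re-run the proof of Theorem~\ref{DPP} and check that it never used compactness of $\Theta$. The conditioning/concatenation properties of Lemma~\ref{conditioningandposting} need only measurability of $\Theta$, and the measurable selection in Lemma~\ref{measele} needs only Borel measurability of the admissible sets and of the cost. Compactness entered only through the representation \eqref{liftlambda}, i.e.\ through Lemma~\ref{V=Vbar}. Here I would instead invoke the continuous-case lifting underlying Proposition~\ref{F=0SOT}. Under Assumption~\ref{uL}, finite cost gives de la Vall\'ee Poussin uniform integrability of $(b,c)$, hence tightness of the lifted laws. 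This is essentially \cite[Theorem~4.2]{TanTouzi2013AOP}, whose DPP I would cite directly if the bookkeeping becomes heavy. The outcome is the identity of Theorem~\ref{DPP}.

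\textbf{Step 2: the Hamiltonian.} For fixed $(t,x,p,\Gamma)$, I set
\[
H(t,x,p,\Gamma)=\inf_{(b,c,0)\in\Theta}\bigl\{b\cdot p+\tfrac12 c\cdot\Gamma+\ell(t,x,b,c,0)\bigr\}.
\]
Coercivity makes the bracket tend to $+\infty$ uniformly for $(t,x,p,\Gamma)$ in compact sets, so the infimum can be restricted to a bounded, hence compact, subset of $\Theta$. On that subset, Assumption~\ref{ellcontinuous} (uniform continuity in $(t,x)$) gives continuity of $H$ as in Proposition~\ref{HJBv}. The same truncation shows $H>-\infty$.

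\textbf{Step 3: the viscosity property.} I repeat the argument of Theorem~\ref{viscosity}.

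For the subsolution property, I argue by contradiction: if $-\partial_t\phi-H>0$ at $(t_0,x_0)$, pick a near-optimal $(b,c)\in\Theta$. This can be done because Step 2 shows the infimum is attained on a compact set. Then use the constant-control law $X=x_0+b(s-t_0)+c^{1/2}(W_s-W_{t_0})$. This is a legitimate element of $\mathfrak{P}^{t_0,x_0}_\Theta$ precisely because $(b,c)$ is a single point of $\Theta$. Applying It\^o's formula up to the exit time $\overline\tau$ of $B_k(t_0,x_0)$, together with the strict-maximum penalisation \eqref{phimaxepsilon}, contradicts the DPP.

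For the supersolution property, the inequality $-\partial_t\phi-H<0$ holds for all controls locally, so $-\partial_t\phi-\mathcal{L}^{b,c}\phi-\ell<0$ for every $(b,c)\in\Theta$ near $(t_0,x_0)$. Taking an $\varepsilon$-optimal $\overline{\mathbb P}$ in the DPP and applying It\^o's formula gives the contradiction. Here the stochastic integral is a true martingale up to $\overline\tau$ because $D_x\phi$ is bounded on $B_k$. The drift term is integrable because $J<\infty$ and coercivity control $\mathbb{E}\int|b|+|c|$.

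\textbf{Main obstacle.} The main obstacle is the supersolution step, specifically obtaining a quantitative gap $\eta>0$ uniform over all admissible laws. The issue is that $\mathbb{E}[\overline\tau-t_0]$ could be small for laws with large $b,c$. I would handle this as Tan--Touzi do: using coercivity, either $\mathbb{E}\int_{t_0}^{\overline\tau}\ell\,\mathrm{d}s$ is large, which pays for the jump of $\phi$ at exit, or the controls are bounded in $L^1$, which keeps $\overline\tau$ away from $t_0$ in expectation. This dichotomy yields the uniform $\eta$.
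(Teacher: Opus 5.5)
Your proposal is correct and follows the paper's route. The paper simply invokes \cite[Theorem~4.2]{TanTouzi2013AOP}, which is exactly the scheme you reconstruct: a DPP, continuity and finiteness of $H$ obtained from coercivity in place of compactness of $\Theta$, and the It\^o/penalisation argument of Theorem~\ref{viscosity}. The only superfluous part is your ``main obstacle'': with the penalisation \eqref{phimaxepsilon} (sign reversed for the supersolution) and continuous paths, the exit point $(\overline\tau,\overline X_{\overline\tau})$ lies on the boundary sphere of $B_k(t_0,x_0)$ once $k<1-t_0$, so the gap $\eta$ is automatically uniform over all admissible laws and no dichotomy is needed (likewise, the true-martingale property of $\int D_x\phi\,\mathrm{d}\overline M^c$ comes from your bound on $\mathbb{E}\int|c_s|\,\mathrm{d}s$, not from boundedness of $D_x\phi$ alone).
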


\section{Markovian projection and PDE formulation}\label{5}
In this section we establish two further equivalent formulations of the semimartingale optimal transport problem. The first one, developed in Subsection~\ref{Markovian}, employs the Markovian projection technique to replace a general semimartingale by a Markovian one without increasing the cost, thereby reducing~\eqref{SOT} to a martingale problem. The second one, presented in Subsection~\ref{PDESOT}, translates this martingale problem into the language of non-local FPK equations, yielding a PDE formulation that generalises the classical Benamou--Brenier formula.

\subsection{The Markovian semimartingale transport problem}\label{Markovian}
A Markovian projection (or mimicking process) transforms a path-dependent semimartingale into another semimartingale whose differential characteristics depend only on the current state $X_t$, while preserving the one--dimensional marginal laws.  The resulting process solves a Markovian SDE, although the Markov property may fail in general. We follow the convention of~\cite{LarssonLong2024ECP} and rely on~\cite[Theorem~3.2]{LarssonLong2024ECP} for the existence of such a projection under our assumptions.
\begin{lem}\label{Markovianprojection}
Let $\Theta\subseteq\mathbb{R}^d\times\mathbb{S}_+^d\times\mathcal{L}$ be closed, convex and satisfy Assumption~\ref{B}. 
Then for every $\mathbb{P}\in\mathfrak{P}_{\Theta}(\mu_0)$ there exists another probability measure $\widehat{\mathbb{P}}\in\mathfrak{P}_{\Theta}(\mu_0)$ with differential characteristics $(\hat b,\hat c,\widehat F)$ such that
\[
\widehat{\mathbb{P}}\circ X_t^{-1}=\mathbb{P}\circ X_t^{-1}\qquad\forall t\in[0,1],
\]
and under $\widehat{\mathbb{P}}$ the canonical process $X$ solves the martingale problem for the non‑local operator $\mathscr{L}=(\mathscr{L}_s)_{0\leq s\leq 1}$ with initial law $\mu_0$. More precisely,
\begin{enumerate}
\item $\widehat{\mathbb{P}}\circ X_0^{-1}=\mu_0$;
\item for every $f\in C_c^2(\mathbb{R}^d)$, the process
\[
M^f_t:=f(X_t)-f(X_0)-\int_0^t \mathscr{L}_s f(X_s)\,\mathrm{d}s
\]
is well defined and an $\mathbb{F}$‑martingale under $\widehat{\mathbb{P}}$, 
\end{enumerate}
where the operator $\mathscr{L}_s$ acts on $f\in C_c^2(\mathbb{R}^d)$ as
\begin{equation}\label{mathscrL}
\begin{aligned}
\mathscr{L}_s f(x)&:=\hat b(s,x)\cdot D_x f(x)+\frac12\hat c(s,x)\cdot D_{xx}^2 f(x)\\
&\quad +\int_{\mathbb{R}^d}\bigl(f(x+z)-f(x)-D_x f(x)\cdot h(z)\bigr)\,\widehat F(s,x,\mathrm{d}z),\quad \forall s\in[0,1],
\end{aligned}
\end{equation}
with $h$ being the fixed continuous truncation function. The coefficients $(\hat b,\hat c,\widehat F)$ are defined as jointly measurable functions on $[0,1]\times\mathbb{R}^d$ satisfying
\begin{equation}\label{hatbcF}
\begin{aligned}
\hat b(t,X_{t-}) &= \mathbb{E}^{\mathbb{P}}\bigl[b_t^{\mathbb{P}}\,\big|\, X_{t-}\bigr],\\
\hat c(t,X_{t-}) &= \mathbb{E}^{\mathbb{P}}\bigl[c_t^{\mathbb{P}}\,\big|\, X_{t-}\bigr],\\
\int_A (1\land|z|^2)\,\widehat F(t,X_{t-},\mathrm{d}z) &= \mathbb{E}^{\mathbb{P}}\!\left[\int_A (1\land|z|^2)\,F_t^{\mathbb{P}}(\mathrm{d}z)\,\bigg|\, X_{t-}\right],\quad \forall A\in\mathcal{B}(\mathbb{R}^d).
\end{aligned}
\end{equation}
\end{lem}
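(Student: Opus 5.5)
The plan is to obtain $\widehat{\mathbb{P}}$ from the mimicking theorem~\cite[Theorem~3.2]{LarssonLong2024ECP} and then use the closedness and convexity of $\Theta$ to check that the Markovian characteristics stay in $\Theta$.

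\emph{Step 1: construction of the candidate coefficients.} Fix $\mathbb{P}\in\mathfrak{P}_{\Theta}(\mu_0)$ with differential characteristics $(b^{\mathbb{P}},c^{\mathbb{P}},F^{\mathbb{P}})$. By Assumption~\ref{B}, $|b^{\mathbb{P}}|+|c^{\mathbb{P}}|+\int(|z|^2\wedge|z|)F^{\mathbb{P}}(\mathrm{d}z)$ is bounded $\mathbb{P}\times\mathrm{d}t$-a.s. Hence the integrability hypotheses of~\cite[Theorem~3.2]{LarssonLong2024ECP} are satisfied: integrability of $b$, $c$, and $\int(1\wedge|z|^2)F(\mathrm{d}z)$, together with a mild moment condition on the jumps. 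We define $(\hat b,\hat c,\widehat F)$ by the conditional expectations \eqref{hatbcF}.

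For $\widehat F$, we disintegrate the finite kernel $A\mapsto\mathbb{E}^{\mathbb{P}}[\int_A(1\wedge|z|^2)F_t^{\mathbb{P}}(\mathrm{d}z)\mid X_{t-}]$. This gives a jointly measurable kernel in $(t,x)$. Dividing by $(1\wedge|z|^2)$ then yields a Lévy measure $\widehat F(t,x,\cdot)$; it charges no mass at $0$ because each $F^{\mathbb{P}}_t$ does not.

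\emph{Step 2: the martingale problem and the marginals.} The mimicking theorem then produces a law $\widehat{\mathbb{P}}$ under which $X$ solves the martingale problem for $\mathscr{L}$ in \eqref{mathscrL}, with $\widehat{\mathbb{P}}\circ X_t^{-1}=\mathbb{P}\circ X_t^{-1}$ for all $t$. In particular the initial law is $\mu_0$. Since $C_c^2$ test functions determine the characteristics (\cite[Theorem~II.2.42]{JacodShiryaev2003}), the martingale problem identifies the characteristics of $X$ under $\widehat{\mathbb{P}}$ as absolutely continuous, with densities $(\hat b(t,X_{t-}),\hat c(t,X_{t-}),\widehat F(t,X_{t-},\cdot))$. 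As there are no fixed discontinuities, $X_{t-}=X_t$ for a.e.\ $t$. Hence $\widehat{\mathbb{P}}\in\mathfrak{P}^{ac}_{sem}$.

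\emph{Step 3 (main obstacle): showing $(\hat b,\hat c,\widehat F)(t,x)\in\Theta$ for $\mathbb{P}\circ X_{t-}^{-1}\otimes\mathrm{d}t$-a.e.\ $(t,x)$.} This point is needed so that the property transfers to $\widehat{\mathbb{P}}\times\mathrm{d}t$-a.e., using equality of the one-dimensional marginals and Fubini. It amounts to a conditional Jensen inequality for a closed convex set in the infinite-dimensional space $\mathbb{R}^d\times\mathbb{S}^d_+\times\mathcal{L}$.

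I would use the lifted coordinates $\overline{\varphi}$, under which $\Theta$ becomes a compact convex set $\overline{\varphi}(\Theta)\subseteq\mathbb{R}^d\times\mathbb{R}^{d^2}\times\mathbb{R}^{\mathbb{N}}$ (Remark~\ref{Thetacompact}). Since $\overline{\varphi}$ is linear on $\mathcal{L}$ and the $g_i$ are dominated by constant multiples of $1\wedge|z|^2$, conditional expectation commutes with it:
\[
\overline{\varphi}(\hat b,\hat c,\widehat F)(t,X_{t-})=\mathbb{E}^{\mathbb{P}}\bigl[\overline{\varphi}(b_t^{\mathbb{P}},c_t^{\mathbb{P}},F_t^{\mathbb{P}})\bigm| X_{t-}\bigr].
\]
A closed convex set in the locally convex space $\mathbb{R}^d\times\mathbb{R}^{d^2}\times\mathbb{R}^{\mathbb{N}}$ is the intersection of closed half-spaces given by continuous linear functionals, i.e.\ elements of the dual $\mathbb{R}^d\times\mathbb{R}^{d^2}\times c_{00}$. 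Countably many such half-spaces suffice by separability. Each of these functionals involves finitely many coordinates, so it commutes with conditional expectation. Applying each half-space inequality under the conditional expectation shows that the conditional mean lies in $\overline{\varphi}(\Theta)$ a.s.

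Finally, injectivity of $\varphi$ on $\mathcal{L}$ identifies $\widehat F$ as the preimage Lévy measure in $\Theta$. The delicate point I would check most carefully is this last step: that the image of $\Theta$ is closed and that the preimage of the conditional mean is exactly $\widehat F$. This is where compactness of $\Theta$, from Assumptions~\ref{B} and~\ref{J}, or directly from the convergence-determining property of $\mathcal{C}^+$, is used. With this, $\widehat{\mathbb{P}}\in\mathfrak{P}_{\Theta}(\mu_0)$.
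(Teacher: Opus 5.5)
Your proposal follows the paper's route: the paper's proof amounts to invoking~\cite[Theorem~3.2]{LarssonLong2024ECP}, with a remark that the discontinuous truncation used there can be replaced by the continuous $h$. Your Step~3 correctly supplies the check that $(\hat b,\hat c,\widehat F)\in\Theta$, which the paper leaves implicit; one small point is that Assumption~\ref{J} is not a hypothesis of this lemma, but you do not need it, because for the topology on $\mathcal{L}$ induced by functions vanishing near the origin, Assumption~\ref{B} alone makes the closed set $\Theta$ compact (mass escaping to $0$ simply disappears in the limit), so $\overline{\varphi}(\Theta)$ is compact, hence closed, and your half-space argument goes through.
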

\begin{rem}\label{g_ix}
Moreover, with a bit more effort (cf.~\cite[Remark~2.6 or (3.6)]{LarssonLong2024ECP}), we can show that
\begin{equation*}
\int_{\mathbb{R}^d} g(X_{t-},z)\,\widehat F(t,X_{t-},\mathrm{d}z) = \mathbb{E}^{\mathbb{P}}\!\left[\int_{\mathbb{R}^d} g(X_{t-},z)\,F_t^{\mathbb{P}}(\mathrm{d}z)\,\bigg|\, X_{t-}\right]
\end{equation*}
holds for every measurable function $g:\mathbb{R}^d\times\mathbb{R}^d\to\mathbb{R}$ satisfying $|g(x,z)|\le C(1\land|z|^2)$ for all $x,z\in\mathbb{R}^d$, with some constant $C>0$.
\end{rem}
\begin{rem}
The truncation function used in~\cite[Theorem~3.2]{LarssonLong2024ECP} is $h(z)=z\mathbf{1}_{\{|z|\le 1\}}$, which is discontinuous.  However, the proof in~\cite{LarssonLong2024ECP} only relies on the boundedness of $h$ and the fact that $h(z)=z$ near the origin.  One may therefore replace $h$ by the smooth, compactly supported truncation function $\pi$ from~\cite[Section~3.3 and (3.15)]{RocknerXieZhang2020PTRF}, which satisfies $\pi(z)=z$ for $|z|\le\delta$ and is continuous. This substitution does not alter any step of the proof, so~\cite[Theorem~3.2]{LarssonLong2024ECP} remains valid.
\end{rem}

For $\mu_0\in\mathcal{P}(\mathbb{R}^d)$, set
\begin{align*}
\widehat{\mathfrak{P}}_{\Theta}(\mu_0)
:=\bigl\{\widehat{\mathbb{P}}\in\mathfrak{P}_{\Theta}(\mu_0)\mid
&\widehat{\mathbb{P}} \text{ is such that } X \text{ solves the martingale problem for }\mathscr{L}\\
&\text{with the Markovian differential characteristics }(\hat{b},\hat{c},\widehat{F}) \text{ defined in } \eqref{hatbcF}\bigr\}.
\end{align*}
For another probability measure $\mu_1\in\mathcal{P}(\mathbb{R}^d)$, define
\[
\widehat{\mathfrak{P}}_{\Theta}(\mu_0,\mu_1):=\bigl\{\widehat{\mathbb{P}}\in\widehat{\mathfrak{P}}_{\Theta}(\mu_0)\;\big|\;
\widehat{\mathbb{P}}\circ X_1^{-1}=\mu_1\bigr\}.
\]
\begin{rem}
If $\Theta$ is closed, convex and satisfies Assumption~\ref{B}, then by definition we have
\begin{align*}
\widehat{\mathfrak{P}}_{\Theta}(\mu_0,\mu_1)
=\bigl\{\widehat{\mathbb{P}}\in\mathfrak{P}_{\Theta}(\mu_0,\mu_1)\mid
&\widehat{\mathbb{P}} \text{ is such that } X \text{ solves the martingale problem for }\mathscr{L} \text{ with the} \\
&\text{Markovian differential characteristics }(\hat{b},\hat{c},\widehat{F}) \text{ defined in } \eqref{hatbcF}\bigr\}.
\end{align*}
Lemma~\ref{Markovianprojection} guarantees that $\widehat{\mathfrak{P}}_{\Theta}(\mu_0,\mu_1)$ is non‑empty whenever $\mathfrak{P}_{\Theta}(\mu_0,\mu_1)$ is non‑empty. 
\end{rem}
\begin{cor}\label{Markovnonempty}
Let $\Theta\subseteq\mathbb{R}^d\times\mathbb{S}_+^d\times\mathcal{L}$ be closed, convex and satisfy Assumption~\ref{B}. If $\mathfrak{P}_{\Theta}(\mu_0,\mu_1)\neq\emptyset$, then $\widehat{\mathfrak{P}}_{\Theta}(\mu_0,\mu_1)\neq\emptyset$. Moreover, for any $\mathbb{P}\in\mathfrak{P}_{\Theta}(\mu_0,\mu_1)$ there exists a measure $\widehat{\mathbb{P}}\in\widehat{\mathfrak{P}}_{\Theta}(\mu_0,\mu_1)\subseteq\mathfrak{P}_{\Theta}(\mu_0,\mu_1)$ such that $X$ has the same one--dimensional marginal laws under $\mathbb{P}$ and $\widehat{\mathbb{P}}$.
\end{cor}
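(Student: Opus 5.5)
The corollary follows almost directly from Lemma~\ref{Markovianprojection}, so the plan is to apply it and then check the two marginal constraints and the membership in the projected class. Fix $\mathbb{P}\in\mathfrak{P}_{\Theta}(\mu_0,\mu_1)$; such a $\mathbb{P}$ exists by hypothesis. Since $\mathfrak{P}_{\Theta}(\mu_0,\mu_1)\subseteq\mathfrak{P}_{\Theta}(\mu_0)$, and $\Theta$ is closed, convex and satisfies Assumption~\ref{B}, Lemma~\ref{Markovianprojection} applies to $\mathbb{P}$. It yields $\widehat{\mathbb{P}}\in\mathfrak{P}_{\Theta}(\mu_0)$ with Markovian differential characteristics $(\hat b,\hat c,\widehat F)$ given by \eqref{hatbcF}, under which $X$ solves the martingale problem for $\mathscr{L}$ with initial law $\mu_0$, and
\[
\widehat{\mathbb{P}}\circ X_t^{-1}=\mathbb{P}\circ X_t^{-1}\quad\text{for all } t\in[0,1].
\]

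Next I verify the terminal constraint. Taking $t=1$ gives $\widehat{\mathbb{P}}\circ X_1^{-1}=\mathbb{P}\circ X_1^{-1}=\mu_1$, so $\widehat{\mathbb{P}}\in\mathfrak{P}_{\Theta}(\mu_0,\mu_1)$. By construction $\widehat{\mathbb{P}}$ also belongs to $\widehat{\mathfrak{P}}_{\Theta}(\mu_0)$: it lies in $\mathfrak{P}_{\Theta}(\mu_0)$ and $X$ solves the martingale problem for $\mathscr{L}$ with the characteristics of \eqref{hatbcF}. Hence $\widehat{\mathbb{P}}\in\widehat{\mathfrak{P}}_{\Theta}(\mu_0,\mu_1)$, which is therefore nonempty. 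The inclusion $\widehat{\mathfrak{P}}_{\Theta}(\mu_0,\mu_1)\subseteq\mathfrak{P}_{\Theta}(\mu_0,\mu_1)$ holds by definition, as recorded in the preceding remark. Equality of all one-dimensional marginals is exactly the displayed identity above.

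The only point needing care is that $\widehat{\mathfrak{P}}_{\Theta}(\mu_0)$ is defined relative to the coefficients \eqref{hatbcF}, which are built from $\mathbb{P}$. The membership should therefore be read as holding with the coefficients associated with the given $\mathbb{P}$, and this is exactly what Lemma~\ref{Markovianprojection} provides. I do not expect a genuine obstacle beyond this bookkeeping.
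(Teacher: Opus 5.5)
Your proposal is correct and matches the paper, which treats this corollary as an immediate consequence of Lemma~\ref{Markovianprojection} (see the remark preceding it): apply the lemma to $\mathbb{P}\in\mathfrak{P}_{\Theta}(\mu_0,\mu_1)\subseteq\mathfrak{P}_{\Theta}(\mu_0)$, then read off the terminal constraint from the equality of the time-$1$ marginals. Your observation that membership in $\widehat{\mathfrak{P}}_{\Theta}$ is relative to the coefficients \eqref{hatbcF} built from the given $\mathbb{P}$ is a fair reading of the paper's somewhat loose definition.
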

We can then define the Markovian semimartingale transport problem with the cost function $\ell$ defined in \eqref{definel}:
\begin{equation}\label{Markovsemipro}
\widehat{V}(\mu_0,\mu_1):=\inf_{\widehat{\mathbb{P}}\in\widehat{\mathfrak{P}}_{\Theta}(\mu_0,\mu_1)} J(\widehat{\mathbb{P}}),
\end{equation}
with the convention $\inf\emptyset=+\infty$.
We next prove the equivalence between the Markovian semimartingale optimal transport problem and the general semimartingale optimal transport problem, using the fact that a Markovian projection does not increase the cost. The following theorem has been proved in the continuous framework in~\cite{GuoLoeperWang2022MF}.
\begin{thm}\label{V=Vhat}
Let $\Theta\subseteq\mathbb{R}^d\times\mathbb{S}_+^d\times\mathcal{L}$ be closed, convex and satisfy Assumptions~\ref{B} and~\ref{J}. Moreover, let the cost function $\ell$ (defined in \eqref{definel}) satisfy Assumption~\ref{L}. Then we have
\begin{equation*}
V(\mu_0,\mu_1)=\widehat{V}(\mu_0,\mu_1),
\end{equation*}
and the infimum in \eqref{SOT} is attained by some $\widehat{\mathbb{P}}\in\widehat{\mathfrak{P}}_{\Theta}(\mu_0,\mu_1)$ whenever $V(\mu_0,\mu_1)<+\infty$.
\end{thm}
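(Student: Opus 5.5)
The plan is to prove the two inequalities $\widehat V\ge V$ and $\widehat V\le V$ separately, and then obtain attainment by projecting the optimiser supplied by Theorem~\ref{D}.

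The inequality $\widehat V(\mu_0,\mu_1)\ge V(\mu_0,\mu_1)$ is immediate from the inclusion $\widehat{\mathfrak{P}}_{\Theta}(\mu_0,\mu_1)\subseteq\mathfrak{P}_{\Theta}(\mu_0,\mu_1)$ in Corollary~\ref{Markovnonempty}.

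For the reverse inequality we may assume $V(\mu_0,\mu_1)<\infty$. Take any $\mathbb{P}\in\mathfrak{P}_{\Theta}(\mu_0,\mu_1)$ with $J(\mathbb{P})<\infty$; one could equally take the optimiser $\mathbb{P}^\ast$ from Theorem~\ref{D}. Let $\widehat{\mathbb{P}}$ be its Markovian projection from Lemma~\ref{Markovianprojection}. Then $\widehat{\mathbb{P}}\in\widehat{\mathfrak{P}}_{\Theta}(\mu_0,\mu_1)$, with the same one-dimensional marginals and characteristics $(\hat b,\hat c,\widehat F)(t,X_{t-})$. The claim to establish is
\[
J(\widehat{\mathbb{P}})\le J(\mathbb{P}).
\]
Since $X$ has no fixed times of discontinuity, $X_{t-}=X_t$ a.s. for each $t$. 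Fubini and equality of marginals then give
\[
J(\widehat{\mathbb{P}})=\int_0^1\mathbb{E}^{\mathbb{P}}\bigl[\ell(t,X_t,\hat b(t,X_t),\hat c(t,X_t),\widehat F(t,X_t))\bigr]\,\mathrm{d}t .
\]
By \eqref{hatbcF} and Remark~\ref{g_ix}, the triple $(\hat b,\hat c,\widehat F)(t,X_t)$ is the conditional expectation of $\theta_t:=(b^{\mathbb{P}}_t,c^{\mathbb{P}}_t,F^{\mathbb{P}}_t)$ given $X_t$ (more precisely $X_{t-}$). The conditional Jensen inequality for the convex function $\theta\mapsto\ell(t,x,\theta)$ with $x=X_t$ frozen then gives
\[
\ell\bigl(t,X_t,\mathbb{E}^{\mathbb{P}}[\theta_t\mid X_t]\bigr)\le \mathbb{E}^{\mathbb{P}}\bigl[\ell(t,X_t,\theta_t)\mid X_t\bigr].
\]
Integrating yields $J(\widehat{\mathbb{P}})\le J(\mathbb{P})$. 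Taking the infimum over $\mathbb{P}$ gives $\widehat V\le V$; applying this to $\mathbb{P}^\ast$ shows $\widehat{\mathbb{P}}^\ast$ attains $V$.

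The main obstacle is justifying Jensen's inequality in the infinite-dimensional, non-linear space $\Theta\subseteq\mathbb{R}^d\times\mathbb{S}^d_+\times\mathcal{L}$. I would first lift to the enlarged coordinates. Via $\overline{\varphi}$, the $\mathcal{L}$-component becomes the sequence $\bigl(\int g_i\,\mathrm{d}F\bigr)_i\in\mathbb{R}^{\mathbb{N}}$. Because $\varphi$ is additive and positively homogeneous, Remark~\ref{g_ix} (each $g_i$ is bounded by $C(1\wedge|z|^2)$) shows that
\[
\overline{\varphi}\bigl(\hat b,\hat c,\widehat F\bigr)=\mathbb{E}^{\mathbb{P}}\bigl[\overline{\varphi}(\theta_t)\mid X_{t-}\bigr]\quad\text{coordinatewise}.
\]
The function $\overline{\ell}(t,x,\cdot)$, extended by $+\infty$ off the compact convex set $\overline{\varphi}(\Theta)$, is convex and lower semicontinuous on the locally convex space $V=\mathbb{R}^d\times\mathbb{R}^{d^2}\times\mathbb{R}^{\mathbb{N}}$. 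Hence it is the pointwise supremum of continuous affine minorants, and these depend only on finitely many coordinates since the dual is $P$. For a countable separating family of such minorants, depending measurably on $(t,x)$, Jensen holds for each minorant trivially, because conditional expectation commutes with linear functionals of finitely many coordinates. Taking the supremum gives the inequality.

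For the countable reduction I would try first to exploit the compactness of $\overline{\varphi}(\Theta)$ together with the separability of $V$. The measurability of the minorants in $x=X_{t-}$ needs care; if it fails, I would fall back on a regular conditional distribution of $\theta_t$ given $X_{t-}$, which exists because $\Theta$ is Polish. One then applies the unconditional Jensen inequality for lower semicontinuous convex functions on a compact convex subset of a locally convex space (barycentre inequality) fibrewise, with $x$ fixed.
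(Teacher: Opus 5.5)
Your proposal is correct and follows essentially the same route as the paper. The paper likewise uses the inclusion for $V\le\widehat V$; the Markovian projection; conditional Jensen for $\overline{\ell}$ in the lifted coordinates $\overline{\varphi}$, with Remark~\ref{g_ix} identifying the conditional expectation of $(\int g_i\,\mathrm{d}F_t^{\mathbb{P}})_i$; the absence of fixed times of discontinuity to pass between $X_{t-}$ and $X_t$; and equality of one-dimensional marginals to transfer the cost to $\widehat{\mathbb{P}}$. Your fallback justification of Jensen in $\mathbb{R}^{\mathbb{N}}$ — a regular conditional distribution plus the barycentre inequality for the lower semicontinuous convex extension on the compact set $\overline{\varphi}(\Theta)$, which needs no countable reduction since $x$ is frozen — fills in a step the paper simply attributes to Jensen's inequality. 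Projecting the optimiser of Theorem~\ref{D} also makes explicit the attainment claim that the paper leaves implicit.
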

\begin{proof}
If $\mathfrak{P}_{\Theta}(\mu_0,\mu_1)=\emptyset$, then $\widehat{\mathfrak{P}}_{\Theta}(\mu_0,\mu_1)=\emptyset$ because it is a subset of the former. In this case both sides equal $+\infty$ by convention, and the equality holds trivially.
Assume now that $\mathfrak{P}_{\Theta}(\mu_0,\mu_1)$ is non‑empty. Since $\widehat{\mathfrak{P}}_{\Theta}(\mu_0,\mu_1)\subseteq\mathfrak{P}_{\Theta}(\mu_0,\mu_1)$, we clearly have $V(\mu_0,\mu_1)\le\widehat{V}(\mu_0,\mu_1)$. For the reverse inequality, fix an arbitrary $\mathbb{P}\in\mathfrak{P}_{\Theta}(\mu_0,\mu_1)$. By Corollary~\ref{Markovnonempty} there exists a measure $\widehat{\mathbb{P}}\in\widehat{\mathfrak{P}}_{\Theta}(\mu_0,\mu_1)$ such that the one--dimensional marginal laws of $X$ under $\mathbb{P}$ and $\widehat{\mathbb{P}}$ coincide. Moreover, under $\widehat{\mathbb{P}}$ the process $X$ solves the martingale problem for $\mathscr{L}$ with coefficients $(\hat b,\hat c,\widehat F)$.
We now estimate $J(\mathbb{P})$ from below. Recall the cost function $\overline{\ell}$ defined in \eqref{deoverl} and the mapping $\varphi$ defined in \eqref{eq:map}. By Lemma~\ref{Markovianprojection} and Jensen's inequality, we obtain
\begin{align*}
J(\mathbb{P})
&=\mathbb{E}^{\mathbb{P}}\!\left[\int_0^1 \ell\bigl(t,X_t,b_t^{\mathbb{P}},c_t^{\mathbb{P}},F_t^{\mathbb{P}}\bigr)\,\mathrm{d}t\right] \\
&=\mathbb{E}^{\mathbb{P}}\!\left[\int_0^1 \overline{\ell}\Bigl(t,X_t,b_t^{\mathbb{P}},c_t^{\mathbb{P}},
\Bigl\{\int_{\mathbb{R}^d}g_{i}(z)\,F_t^{\mathbb{P}}(\mathrm{d}z)\Bigr\}_{i\in\mathbb{N}}\Bigr)\,\mathrm{d}t\right]\\
&\ge\mathbb{E}^{\mathbb{P}}\!\left[\int_0^1 \overline{\ell}\Bigl(t,X_t,\,
\mathbb{E}^{\mathbb{P}}\!\Bigl[b_t^{\mathbb{P}},c_t^{\mathbb{P}},\Bigl\{\int_{\mathbb{R}^d}g_{i}(z)\,F_t^{\mathbb{P}}(\mathrm{d}z)\Bigr\}_{i\in\mathbb{N}}\,\Big|\,
X_{t-}\Bigr]\Bigr)\,\mathrm{d}t\right]\\
&=\mathbb{E}^{\mathbb{P}}\!\left[\int_0^1 \overline{\ell}\Bigl(t,X_t,\hat b(t,X_{t-}),\hat c(t,X_{t-}),
\mathbb{E}^{\mathbb{P}}\!\Bigl[\Bigl\{\int_{\mathbb{R}^d}g_{i}(z)\,F_t^{\mathbb{P}}(\mathrm{d}z)\Bigr\}_{i\in\mathbb{N}}\,\Big|\,
X_{t-}\Bigr]\Bigr)\,\mathrm{d}t\right].
\end{align*}
Since $\mathcal{C}^+(\mathbb{R}^d):=\{g_i \mid i \in \mathbb{N}\}\subseteq C_b(\mathbb{R}^d)$ is a law-determining class for L\'evy measures, we can apply Remark~\ref{g_ix} to obtain
\begin{align*}
\mathbb{E}^{\mathbb{P}}
&\left[\int_0^1 \overline{\ell}\Bigl(t,X_t,\hat b(t,X_{t-}),\hat c(t,X_{t-}),
\mathbb{E}^{\mathbb{P}}\!\Bigl[\Bigl\{\int_{\mathbb{R}^d}g_{i}(z)\,F_t^{\mathbb{P}}(\mathrm{d}z)\Bigr\}_{i\in\mathbb{N}}\,\Big|\,
X_{t-}\Bigr]\Bigr)\,\mathrm{d}t\right]\\
&=\mathbb{E}^{\mathbb{P}}\!\left[\int_0^1 \overline{\ell}\Bigl(t,X_t,\hat b(t,X_{t-}),\hat c(t,X_{t-}),
\Bigl\{\int_{\mathbb{R}^d}g_{i}(z)\,\widehat F(t,X_{t-},\mathrm{d}z)\Bigr\}_{i\in\mathbb{N}}\Bigr)\,\mathrm{d}t\right]\\
&=\mathbb{E}^{\mathbb{P}}\!\left[\int_0^1 \ell\bigl(t,X_t,\hat b(t,X_{t-}),\hat c(t,X_{t-}),\widehat F(t,X_{t-})\bigr)\,\mathrm{d}t\right].
\end{align*}
Note that $X$ has no fixed time of discontinuity under $\mathbb{P}$ (see~\cite[Lemma~2.9]{LarssonLong2024ECP}). It follows that 
\begin{align*}
\mathbb{E}^{\mathbb{P}}
&\left[\int_0^1 \ell\bigl(t,X_t,\hat b(t,X_{t-}),\hat c(t,X_{t-}),\widehat F(t,X_{t-})\bigr)\,\mathrm{d}t\right]\\
&\quad=\mathbb{E}^{\mathbb{P}}\!\left[\int_0^1 \ell\bigl(t,X_t,\hat b(t,X_{t}),\hat c(t,X_{t}),\widehat F(t,X_{t})\bigr)\,\mathrm{d}t\right]\\
&\quad=\mathbb{E}^{\widehat{\mathbb{P}}}\!\left[\int_0^1 \ell\bigl(t,X_t,\hat b(t,X_{t}),\hat c(t,X_{t}),\widehat F(t,X_{t})\bigr)\,\mathrm{d}t\right].
\end{align*}
The last equality is justified by the fact that $\mathbb{P}$ and $\widehat{\mathbb{P}}$ admit the same one--dimensional marginal laws and the integrand depends on the path only through the state $X_t$. Hence $J(\mathbb{P})\geq J(\widehat{\mathbb{P}})$. Taking the infimum yields the desired result.
\end{proof}

For continuous semimartingales, we have the following result based on~\cite[Proposition~3.5]{GuoLoeperWang2022MF} and Proposition~\ref{F=0SOT}.
\begin{prop}\label{F=0V=Vhat}
Let $\Theta\subseteq\mathbb{R}^d\times\mathbb{S}_+^d\times\{0\}$ be closed and convex, and let the cost function $\ell$ satisfy Assumptions~\ref{L} and~\ref{uL}. Then the conclusions of Theorem~\ref{V=Vhat} also hold.
\end{prop}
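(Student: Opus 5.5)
We plan to run the argument of Theorem~\ref{V=Vhat} with $F\equiv 0$, replacing the two ingredients that used boundedness of $\Theta$ by their continuous-case counterparts. For existence of minimisers and lower semicontinuity, Proposition~\ref{F=0SOT} is available under Assumptions~\ref{L} and~\ref{uL}. For the projection step, the Markovian projection for continuous semimartingales with integrable characteristics (Gyöngy/Brunick--Shreve, as used in \cite[Proposition~3.5]{GuoLoeperWang2022MF}) replaces Lemma~\ref{Markovianprojection}.

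\emph{Trivial cases and reduction.} If $V(\mu_0,\mu_1)=+\infty$, then $V\le\widehat V$ gives the equality at once, because $\widehat{\mathfrak P}_\Theta(\mu_0,\mu_1)\subseteq\mathfrak P_\Theta(\mu_0,\mu_1)$. Assume therefore $V<\infty$ and fix $\mathbb P\in\mathfrak P_\Theta(\mu_0,\mu_1)$ with $J(\mathbb P)<\infty$.

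\emph{Integrability from coercivity.} Assumption~\ref{uL} gives a constant $K$ such that $|b|+|c|\le K+L(t,\omega,b,c,0)$ for all arguments. Hence
\[
\mathbb E^{\mathbb P}\!\int_0^1(|b^{\mathbb P}_t|+|c^{\mathbb P}_t|)\,\mathrm dt\le K+J(\mathbb P)<\infty .
\]
This $L^1(\mathbb P\times\mathrm dt)$ bound is exactly what the Brunick--Shreve mimicking theorem needs. It yields measurable $(\hat b,\hat c)$ with $\hat b(t,X_t)=\mathbb E^{\mathbb P}[b^{\mathbb P}_t\mid X_t]$ and $\hat c(t,X_t)=\mathbb E^{\mathbb P}[c^{\mathbb P}_t\mid X_t]$, and a law $\widehat{\mathbb P}$ solving the martingale problem for $\hat b\cdot D_x+\frac12\hat c\cdot D^2_{xx}$ with the same one-dimensional marginals as $\mathbb P$.

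Two further properties of $\widehat{\mathbb P}$ are needed.
\begin{itemize}
\item[(i)] \emph{Values in $\Theta$.} Since $\Theta$ is closed and convex and conditional expectations of $\Theta$-valued integrable variables stay in $\Theta$, we get $(\hat b,\hat c,0)\in\Theta$ a.e. One may need to modify $(\hat b,\hat c)$ on a null set of $\mathrm dt\otimes\mathrm{Law}(X_t)$; this set is also null for $\widehat{\mathbb P}\times\mathrm dt$, since the marginals agree. Thus $\widehat{\mathbb P}\in\widehat{\mathfrak P}_\Theta(\mu_0,\mu_1)$.
\item[(ii)] \emph{Cost comparison.} Conditional Jensen applied to the convex lower semicontinuous $\theta\mapsto\ell(t,x,\theta)$, conditioning on $X_t$ (no jumps, so $X_{t-}=X_t$), gives $J(\mathbb P)\ge\mathbb E^{\mathbb P}\int_0^1\ell(t,X_t,\hat b(t,X_t),\hat c(t,X_t))\,\mathrm dt$. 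The right-hand side equals $J(\widehat{\mathbb P})$, again because the marginals coincide.
\end{itemize}

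\emph{Main obstacle.} The delicate point is applying conditional Jensen when $\ell$ is only convex and lower semicontinuous with possibly unbounded arguments. I would write $\ell(t,x,\cdot)$ as a countable supremum of affine minorants, with coefficients measurable in $(t,x)$, apply conditional Jensen to each affine function, and take the supremum. The required integrability of the arguments is the $L^1$ bound obtained from coercivity above.

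\emph{Conclusion.} Taking the infimum over $\mathbb P$ gives $\widehat V\le V$, hence $V=\widehat V$. For attainment, Proposition~\ref{F=0SOT} supplies an optimiser $\mathbb P^\ast\in\mathfrak P_\Theta(\mu_0,\mu_1)$. Its projection $\widehat{\mathbb P}^\ast$ lies in $\widehat{\mathfrak P}_\Theta(\mu_0,\mu_1)$ and satisfies $J(\widehat{\mathbb P}^\ast)\le J(\mathbb P^\ast)=V$, so it is a Markovian minimiser.
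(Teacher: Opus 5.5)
Your proposal is correct and follows essentially the same route as the paper. The paper reruns the proof of Theorem~\ref{V=Vhat}, replacing Lemma~\ref{Markovianprojection} by the continuous-case Markovian projection of \cite[Proposition~3.5]{GuoLoeperWang2022MF} (Gy\"ongy/Brunick--Shreve) and using Proposition~\ref{F=0SOT} for existence of an optimiser. Your derivation of the $L^1(\mathbb{P}\times\mathrm{d}t)$ bound on $(b^{\mathbb{P}},c^{\mathbb{P}})$ from Assumption~\ref{uL} supplies the integrability that boundedness of $\Theta$ provided before, and your treatment of Jensen, values in $\Theta$, and attainment spells out what the paper leaves to the citation.
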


\subsection{PDE formulation}\label{PDESOT}
Moreover, by the connections established in Lemma~\ref{Markovianprojection}, the semimartingale optimal transport problem $V(\mu_0,\mu_1)$ defined in \eqref{SOT} can be studied using PDE methods. Following the Benamou--Brenier formulation of classical optimal transport, let $\rho_t$ denote the marginal law of $X_t$. Then, by Dynkin’s formula, we obtain the following non-local FPK equation:
\begin{equation}\label{FPKE}
\partial_{t}\rho_t=\mathscr{L}_t^{*}\rho_t.
\end{equation}
We next define weak solutions of this equation, following the definition in~\cite{RocknerXieZhang2020PTRF}.
\begin{de}[Weak solution]
Let $\Theta\subseteq\mathbb{R}^d\times\mathbb{S}_+^d\times\mathcal{L}$ be closed, convex and satisfy Assumptions~\ref{B} and~\ref{J}, and let $\rho:[0,1]\to\mathcal{P}(\mathbb{R}^d)$ be a continuous curve. We call $\rho=(\rho_t)_{0\leq t\leq 1}$ a weak solution of the non-local FPK equation \eqref{FPKE} induced by a measurable function $\theta=(\theta(t))_{0\leq t\leq 1}$, where $\theta(t,x)=(b(t,x),c(t,x),F(t,x))\in\Theta$ for all $x\in\mathbb{R}^d$, if for all $f\in C_c^2(\mathbb{R}^d)$ and $t\in[0,1]$,
\begin{equation}
\int_{\mathbb{R}^d}f(x)\,\rho_t(\mathrm{d}x)=\int_{\mathbb{R}^d}f(x)\,\rho_0(\mathrm{d}x)+\int_0^t\int_{\mathbb{R}^d}\mathscr{L}_s^{\theta}f(x)\,\rho_s(\mathrm{d}x)\,\mathrm{d}s,
\end{equation}
where 
\begin{equation*}
\begin{aligned}
\mathscr{L}_s^{\theta} f(x)&:= b(s,x)\cdot D_x f(x)+\frac{1}{2} c(s,x)\cdot D_{xx}^2 f(x)\\
&\quad +\int_{\mathbb{R}^d}\bigl(f(x+z)-f(x)-D_x f(x)\cdot h(z)\bigr)\, F(s,x,\mathrm{d}z),\quad \forall s\in[0,1].
\end{aligned}
\end{equation*}
\end{de}
We next introduce the following PDE optimisation problem:
\begin{equation}\label{P}
PDE(\mu_0,\mu_1):=\inf_{\theta,\rho}\Bigl\{\int_0^1\int_{\mathbb{R}^d} \ell(t,x,\theta(t,x))\,\rho_t(\mathrm{d}x)\,\mathrm{d}t\Bigr\},
\end{equation}
with the convention $\inf\emptyset=+\infty$, where the infimum is taken over all pairs $(\theta,\rho)$ satisfying the following conditions:
\begin{itemize}
\item[(1)] The function $\theta:[0,1]\times\mathbb{R}^d\to\mathbb{R}^d\times\mathbb{S}_+^d\times\mathcal{L}$ is measurable, and for almost every $t\in[0,1]$ and $x\in\mathbb{R}^d$, $\theta(t,x)=(b(t,x),c(t,x),F(t,x))\in \Theta$.
\item[(2)] The continuous curve $\rho:[0,1]\to\mathcal{P}(\mathbb{R}^d)$, with $\rho_0=\mu_0$ and $\rho_1=\mu_1$, is a weak solution of the non-local FPKE \eqref{FPKE} induced by $\theta$.
\end{itemize}
Finally, by exploiting the equivalence between non-local FPKEs and martingale problems from~\cite[Corollary~1.9]{RocknerXieZhang2020PTRF}, we establish the following theorem. This result has already been proved in the continuous framework by~\cite[Theorem~4.2]{DweikGhoussoubKimPalmer2020Poincare}, where the authors used the regularity of viscosity solutions of the HJB equation.
\begin{thm}\label{PDE}
Let $\Theta\subseteq\mathbb{R}^d\times\mathbb{S}_+^d\times\mathcal{L}$ be closed, convex and satisfy Assumptions~\ref{B} and~\ref{J}. Moreover, let the cost function $\ell$ (defined in \eqref{definel}) satisfy Assumption~\ref{L}. Then
\begin{equation}
V(\mu_0,\mu_1)=PDE(\mu_0,\mu_1).
\end{equation}
\end{thm}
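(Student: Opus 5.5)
We prove the two inequalities $PDE(\mu_0,\mu_1)\le V(\mu_0,\mu_1)$ and $V(\mu_0,\mu_1)\le PDE(\mu_0,\mu_1)$ separately. In both cases we reduce to the Markovian problem via Theorem~\ref{V=Vhat}, and pass between martingale problems and weak solutions of \eqref{FPKE} using the superposition principle \cite[Corollary~1.9]{RocknerXieZhang2020PTRF}. If either side is $+\infty$, the corresponding inequality is trivial, so we assume finiteness where needed.

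\emph{Step 1: $PDE\le V$.} Assume $V(\mu_0,\mu_1)<\infty$. By Theorem~\ref{V=Vhat} it suffices to bound $J(\widehat{\mathbb{P}})$ from below for every $\widehat{\mathbb{P}}\in\widehat{\mathfrak{P}}_{\Theta}(\mu_0,\mu_1)$. Let $(\hat b,\hat c,\widehat F)$ be its Markovian coefficients from \eqref{hatbcF}, and set $\theta:=(\hat b,\hat c,\widehat F)$ and $\rho_t:=\widehat{\mathbb{P}}\circ X_t^{-1}$.
\begin{itemize}
\item[(i)] \emph{$\theta$ takes values in $\Theta$.} $\theta(t,x)$ is a conditional expectation of $\Theta$-valued characteristics. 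Since $\Theta$ is closed and convex (and compact by Remark~\ref{Thetacompact}), conditional expectations of $\Theta$-valued random elements stay in $\Theta$. The Lévy-measure component is handled through $\varphi$ and Remark~\ref{g_ix}, i.e.\ via the closed convex set $\overline{\varphi}(\Theta)$. This holds for a.e.\ $(t,x)$; on the null set we redefine $\theta$ to be a fixed element of $\Theta$.
\item[(ii)] \emph{$\rho$ is a continuous weak solution.} Continuity of $t\mapsto\rho_t$ follows because $X$ has no fixed times of discontinuity. Taking expectations in the martingale property of $M^f$ from Lemma~\ref{Markovianprojection}, together with Fubini, gives exactly the weak formulation of \eqref{FPKE} with $\rho_0=\mu_0$ and $\rho_1=\mu_1$.
\item[(iii)] \emph{Cost identity.} By Fubini, $J(\widehat{\mathbb{P}})=\int_0^1\int \ell(t,x,\theta(t,x))\,\rho_t(\mathrm{d}x)\,\mathrm{d}t$.
\end{itemize}
Hence $PDE\le \widehat V=V$.

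\emph{Step 2: $V\le PDE$.} Take any admissible pair $(\theta,\rho)$ with finite cost. Since $\theta\in\Theta$ and $\Theta$ satisfies Assumption~\ref{B}, the coefficients are bounded: $|b|+|c|+\int(|z|^2\wedge|z|)\,F(\mathrm{d}z)$ is uniformly bounded. This yields the integrability condition required by the superposition principle \cite[Corollary~1.9]{RocknerXieZhang2020PTRF}, which provides a probability $\mathbb{Q}$ on $\Omega$ such that:
\begin{itemize}
\item[(a)] $\mathbb{Q}$ solves the martingale problem for $\mathscr{L}^{\theta}$;
\item[(b)] $\mathbb{Q}\circ X_t^{-1}=\rho_t$ for all $t\in[0,1]$.
\end{itemize}
Since $f\in C_c^2$ is a determining class for characteristics, the standard correspondence between martingale problems and semimartingale characteristics \cite[Theorem~II.2.42]{JacodShiryaev2003} then shows that $X$ is a $\mathbb{Q}$-semimartingale with absolutely continuous characteristics $(b,c,F)(t,X_{t-})$. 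These lie in $\Theta$ $\mathbb{Q}\times\mathrm{d}t$-a.s.: the exceptional $\mathrm{d}t\,\mathrm{d}x$-null set is avoided $\mathbb{Q}\times\mathrm{d}t$-a.s. because $\rho_t$ need not be absolutely continuous, so one should use the statement ``$\theta\in\Theta$ for $\rho_t(\mathrm{d}x)\,\mathrm{d}t$-a.e.'', or simply redefine $\theta$ off a $\rho_t\,\mathrm{d}t$-null set. Hence $\mathbb{Q}\in\mathfrak{P}_{\Theta}(\mu_0,\mu_1)$. Using $X_{t-}=X_t$ $\mathrm{d}t$-a.e.\ and Fubini,
\[
J(\mathbb{Q})=\int_0^1\int \ell(t,x,\theta(t,x))\,\rho_t(\mathrm{d}x)\,\mathrm{d}t ,
\]
so $V\le J(\mathbb{Q})$, and taking the infimum over $(\theta,\rho)$ gives $V\le PDE$.

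\emph{Main obstacle.} The hardest step is applying the superposition principle in Step~2. Its hypotheses must match our setting: a general continuous truncation $h$ in place of the one used in \cite{RocknerXieZhang2020PTRF}, handled via the remark after Lemma~\ref{Markovianprojection}; the moment condition on $F$; and the null-set issue for $\theta$. We check the integrability hypothesis $\int_0^1\int(|b|+|c|+\int(|z|^2\wedge|z|)F)\,\rho_t\,\mathrm{d}t<\infty$ directly from Assumption~\ref{B}. The null-set issue is handled by defining admissibility up to $\rho_t(\mathrm{d}x)\,\mathrm{d}t$-null sets, on which the cost is insensitive.
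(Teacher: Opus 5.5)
Your proposal is correct and follows essentially the same route as the paper. For $PDE\le V$ you pass to the Markovian problem via Theorem~\ref{V=Vhat} and read off a feasible pair $(\theta,\rho)$ from the projected coefficients and the marginal flow; you check the weak formulation directly from the martingale property of $M^f$ rather than quoting \cite[Corollary~1.9]{RocknerXieZhang2020PTRF}. For $V\le PDE$ you use the superposition principle of \cite{RocknerXieZhang2020PTRF} to produce a law in $\mathfrak{P}_{\Theta}(\mu_0,\mu_1)$ with marginals $\rho_t$ and the same cost. You identify the characteristics of that measure as exactly $\theta(t,X_{t-})$, not merely in conditional expectation as the paper's item (i) is phrased, and this is precisely what the cost identity requires.
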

\begin{proof}
We first show the inequality “$\geq$”. If $V(\mu_0,\mu_1)=+\infty$ there is nothing to prove, so we assume it is finite. By Theorem~\ref{V=Vhat} there exists a probability measure $\widehat{\mathbb{P}}\in\widehat{\mathfrak{P}}_{\Theta}(\mu_0,\mu_1)$ with $J(\widehat{\mathbb{P}})=V(\mu_0,\mu_1)$. Set $\rho_t:=\widehat{\mathbb{P}}\circ X_t^{-1}$ and $\theta(t,x):=(\hat{b}(t,x),\hat{c}(t,x),\widehat{F}(t,x))$. Using~\cite[Corollary~1.9]{RocknerXieZhang2020PTRF}, $\rho$ is a weak solution of the non-local FPKE \eqref{FPKE} induced by $\theta$. Consequently $V(\mu_0,\mu_1)\geq PDE(\mu_0,\mu_1)$.
To prove the reverse inequality, assume without loss of generality that $PDE(\mu_0,\mu_1)<+\infty$ and choose any feasible pair $(\theta,\rho)$ for $PDE(\mu_0,\mu_1)$. By the superposition principle~\cite[Theorem~1.5]{RocknerXieZhang2020PTRF}, there exists a probability measure $\widehat{\mathbb{P}}\in\mathcal{P}(\Omega)$ such that
\begin{itemize}
\item[(i)] $X$ under $\widehat{\mathbb{P}}$ is a semimartingale whose differential characteristics satisfy
\[
\mathbb{E}^{\widehat{\mathbb{P}}}\!\bigl[\,(b_t^{\widehat{\mathbb{P}}},c_t^{\widehat{\mathbb{P}}},F_t^{\widehat{\mathbb{P}}})\mid X_{t-}\,\bigr]=\theta(t,X_{t-})
\qquad \widehat{\mathbb{P}}\times\mathrm{d}t\text{-a.s.}
\]
\item[(ii)] $\widehat{\mathbb{P}}\circ X_t^{-1}=\rho_t$ for every $t\in[0,1]$.
\end{itemize}
In particular $\widehat{\mathbb{P}}\in\widehat{\mathfrak{P}}_{\Theta}(\mu_0,\mu_1)$. Moreover,
\[
J(\widehat{\mathbb{P}}) = \mathbb{E}^{\widehat{\mathbb{P}}}\!\left[\int_0^1 \ell\bigl(t,X_t,\theta(t,X_t)\bigr)\,\mathrm{d}t\right]
= \int_0^1\int_{\mathbb{R}^d} \ell\bigl(t,x,\theta(t,x)\bigr)\,\rho_t(\mathrm{d}x)\,\mathrm{d}t.
\]
Taking the infimum over all such $(\theta,\rho)$ gives $V(\mu_0,\mu_1)\leq PDE(\mu_0,\mu_1)$, which completes the proof.
\end{proof}
For continuous semimartingales, we have the following analogue, which follows from~\cite[Theorem~3.8]{GuoLoeperWang2022MF}.
\begin{prop}\label{F=0PDE}
Let $\Theta\subseteq\mathbb{R}^d\times\mathbb{S}_+^d\times\{0\}$ be closed and convex, and let the cost function $\ell$ satisfy Assumptions~\ref{L} and~\ref{uL}. Then the result of Theorem~\ref{PDE} also holds.
\end{prop}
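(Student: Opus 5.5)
The plan is to reproduce the two-sided argument of Theorem~\ref{PDE} in the continuous setting. The jump-specific tools (the Larsson--Long projection and the R\"ockner--Xie--Zhang superposition principle) will be replaced by their diffusion counterparts: the Gy\"ongy--Brunick--Shreve mimicking theorem and the Figalli--Trevisan superposition principle. Compactness will come from coercivity (Assumption~\ref{uL}) instead of Assumption~\ref{B}. Because $F\equiv0$, the non-local term disappears, and the FPK equation becomes $\partial_t\rho_t=-\mathrm{div}(b\rho_t)+\tfrac12\sum_{ij}\partial_{ij}(c_{ij}\rho_t)$ in the weak sense.

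For ``$\ge$'', I may assume $V(\mu_0,\mu_1)<\infty$. By Proposition~\ref{F=0SOT} there is an optimiser $\mathbb{P}^\ast\in\mathfrak{P}_\Theta(\mu_0,\mu_1)$. Proposition~\ref{F=0V=Vhat} (via \cite[Proposition~3.5]{GuoLoeperWang2022MF}) gives a Markovian $\widehat{\mathbb{P}}$ with the same one-dimensional marginals and coefficients $\hat b(t,x)=\mathbb{E}^{\mathbb{P}^\ast}[b_t\mid X_t=x]$ and $\hat c(t,x)=\mathbb{E}^{\mathbb{P}^\ast}[c_t\mid X_t=x]$; moreover $J(\widehat{\mathbb{P}})\le J(\mathbb{P}^\ast)$ by Jensen and convexity of $\ell$ in $\theta$. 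These coefficients lie in $\Theta$ for a.e.\ $(t,x)$ because $\Theta$ is closed and convex and conditional expectations preserve this. Set $\rho_t=\widehat{\mathbb{P}}\circ X_t^{-1}$, which is weakly continuous since paths are continuous. Applying It\^o's formula to $f\in C_c^2$ and taking expectations shows $\rho$ is a weak solution with $\theta=(\hat b,\hat c,0)$. The integrability needed for the expectation comes from $\mathbb{E}\int_0^1(|b_t|+|c_t|)\,\mathrm{d}t<\infty$, which coercivity yields from $J<\infty$. Hence the PDE cost is at most $V$.

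For ``$\le$'', take a feasible $(\theta,\rho)$ with finite cost. Coercivity gives $\int_0^1\!\int(|b|+|c|)\,\mathrm{d}\rho_t\,\mathrm{d}t<\infty$, up to a constant and the bounded region where $\ell/(|b|+|c|)$ is not yet large. This is exactly the integrability hypothesis of the Figalli/Trevisan superposition principle. That principle gives a law $\widehat{\mathbb{P}}$ on continuous paths solving the martingale problem for $\mathscr{L}^\theta$ with $\widehat{\mathbb{P}}\circ X_t^{-1}=\rho_t$. Under $\widehat{\mathbb{P}}$, $X$ is then a continuous semimartingale with characteristics $(b(t,X_t),c(t,X_t),0)\in\Theta$ a.s., so $\widehat{\mathbb{P}}\in\mathfrak{P}_\Theta(\mu_0,\mu_1)$. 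Its cost equals $\int_0^1\!\int\ell(t,x,\theta)\,\mathrm{d}\rho_t\,\mathrm{d}t$ by Fubini, because the integrand depends on the path only through $X_t$. This proves $V\le PDE$.

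The main obstacle is the integrability bookkeeping in the absence of a bound on $\Theta$. Superposition needs the first-moment condition on $(b,c)$ against $\rho_t\,\mathrm{d}t$, and identifying the martingale problem with the semimartingale class needs the local martingale to be a genuine martingale for the test functions. I would first try to deduce both from coercivity: choose $R$ with $\ell\ge|b|+|c|$ whenever $\max\{|b|,|c|\}>R$, so that $|b|+|c|\le \ell+2R$ pointwise, which gives the required bounds. This is precisely the content of \cite[Theorem~3.8]{GuoLoeperWang2022MF}, which I would cite at that point.
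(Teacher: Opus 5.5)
Your proposal is correct and follows essentially the paper's route: the paper simply invokes \cite[Theorem~3.8]{GuoLoeperWang2022MF}, which is the continuous-path version of the argument of Theorem~\ref{PDE}, using mimicking via Proposition~\ref{F=0V=Vhat} for ``$\ge$'' and a superposition principle for ``$\le$'', with coercivity giving $|b|+|c|\le \ell+2R$ in place of Assumption~\ref{B}. One small point: the projected coefficients lie in $\Theta$ only $\rho_t(\mathrm{d}x)\,\mathrm{d}t$-a.e., so you should redefine them on that null set to match the weak-solution definition, which changes neither the FPK equation nor the cost.
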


\section{Applications of the semimartingale transport theory}\label{6}
The semimartingale optimal transport framework provides a broad probabilistic generalisation of Benamou--Brenier type formulations. 
In particular, it naturally encompasses Schr\"odinger bridge problems (by taking $\Theta = \mathbb{R}^d\times\{I_d\}\times\{0\}$ with a suitable cost) and barycentric weak optimal transport (with $\Theta = \mathbb{R}^d\times\mathbb{S}_+^d\times\{0\}$); we refer to~\cite{guo2025dynamiccharacterizationbarycentricoptimal} for detailed discussions. 
In this section we illustrate its scope through three further examples: 
a new proof of the Benamou--Brenier formula based on our semimartingale optimal transport duality (Subsection~\ref{OTBB});
a PDE characterisation of martingale optimal transport under weaker regularity assumptions (Subsection~\ref{PDEMOT});
and a Kantorovich duality for martingale optimal transport with jumps on the Skorokhod $J_1$-space that substantially relaxes the conditions required in the classical literature (Subsection~\ref{MOTJ}).

\subsection{The classical optimal transport problem}\label{OTBB}
Let $\mu_0,\mu_1\in\mathcal{P}(\mathbb{R}^d)$ have finite second moments. The optimal transport problem with quadratic cost is given by
\begin{equation}
OT_2(\mu_0, \mu_1):=\inf_{\pi\in \Pi(\mu_0, \mu_1)}\int_{\mathbb{R}^d\times\mathbb{R}^d}|x - y|^2\,\pi(\mathrm{d}x, \mathrm{d}y),
\end{equation}
where $\Pi(\mu_0, \mu_1)$ denotes the set of couplings between $\mu_0$ and $\mu_1$, i.e.
\[
\pi\in\Pi(\mu_0, \mu_1)\iff\pi(A\times\mathbb{R}^d)=\mu_0(A)\ \text{and}\ \pi(\mathbb{R}^d\times A)=\mu_1(A)\qquad\forall A\in\mathcal{B}(\mathbb{R}^d).
\]
See~\cite{Villani2009OT} for an overview. In the seminal work~\cite{BB2000} it is shown that solving $OT_2(\mu_0, \mu_1)$ is equivalent to minimising the total energy along absolutely continuous curves $(\rho_t)_{t\in[0,1]}$ connecting $\mu_0$ to $\mu_1$. More precisely,
\begin{equation}\label{OT}
OT_2(\mu_0, \mu_1)=\inf_{(b, \rho)}\int_0^1\!\!\int_{\mathbb{R}^d} |b_t|^2 \,\rho_t(\mathrm{d}x)\,\mathrm{d}t,
\end{equation}
where the infimum is taken over all pairs $(b, \rho)$ such that $\rho_0=\mu_0$, $\rho_1=\mu_1$, and $(b, \rho)$ satisfies
\begin{equation}
\partial_t \rho_t + \operatorname{div}(b_t \rho_t)=0
\end{equation}
in the sense of distributions. Equation~\eqref{OT} is known as the dynamic formulation of optimal transport, or the Benamou--Brenier formula. It admits the probabilistic representation
\begin{equation}\label{OT=BB}
OT_2(\mu_0, \mu_1) = BB_2(\mu_0, \mu_1),
\end{equation}
where
\begin{equation}\label{BB}
BB_2(\mu_0, \mu_1) := \inf\biggl\{\mathbb{E}\biggl[\int_0^1 |b_t|^2\,\mathrm{d}t\biggr] \;\Big|\; \mathrm{d}X_t = b_t\,\mathrm{d}t,\ X_0 \sim \mu_0,\ X_1 \sim \mu_1 \biggr\}.
\end{equation}
Next, we directly establish an equivalence between~\eqref{BB} and the classical Kantorovich duality, without relying on the intermediate formula~\eqref{OT=BB}.
\begin{thm}\label{thm:OTdual}
Let $\mu_0,\mu_1\in\mathcal{P}(\mathbb{R}^d)$ have finite second moments. Then 
\begin{equation*}
BB_2(\mu_0,\mu_1)=\sup_{\phi\in C_b(\mathbb{R}^d)}\Bigl\{\int_{\mathbb{R}^d}\phi^c(x)\,\mu_0(\mathrm{d}x)-\int_{\mathbb{R}^d}\phi(y)\,\mu_1(\mathrm{d}y)\Bigr\},
\end{equation*}
where $\phi^c(x):=\inf_{y\in\mathbb{R}^d}\{|x-y|^2+\phi(y)\}$,
and the infimum is achieved for the problem $BB_2(\mu_0,\mu_1)$ in~\eqref{BB} whenever it is finite.
\end{thm}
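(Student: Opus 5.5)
The plan is to realise $BB_2$ as a special case of the continuous-semimartingale version of the problem and then apply Proposition~\ref{F=0SOT} (the $F\equiv0$ analogue of Theorem~\ref{D}). Take $\Theta:=\mathbb{R}^d\times\{0\}\times\{0\}$, which is closed and convex, and $L(t,\omega,b,c,0):=|b|^2+\iota(c)$ with $\iota(0)=0$. Since $c$ is frozen at $0$ on $\Theta$, only $b$ matters. Then $\mathfrak{P}_\Theta(\mu_0,\mu_1)$ consists of laws of processes with $X_t=X_0+\int_0^t b_s\,\mathrm{d}s$ and no martingale part: the continuous martingale has zero quadratic variation, hence vanishes, and the jump compensator is zero. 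Thus $V(\mu_0,\mu_1)=BB_2(\mu_0,\mu_1)$, with the drift adapted to the path filtration. To match \eqref{BB}, we note that adapting $b$ costs nothing, by Jensen applied to $\mathbb{E}[b_t\mid\mathcal{F}^X_t]$ (the projection argument of Theorem~\ref{V=Vhat}).

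Next we check the hypotheses. Assumption~\ref{L} is immediate: the cost is nonnegative, continuous and convex. Coercivity, Assumption~\ref{uL}, holds in the form needed, since $|b|^2/(|b|+|c|)\to\infty$ along $\Theta$, where $c=0$. If one insists on the literal formulation with $\max\{|b|,|c|\}\to\infty$ over all of $\mathbb{R}^d\times\mathbb{S}^d_+$, set $L=+\infty$ off $\Theta$, or read $L$ as defined only on $\Theta$ as the paper does. Proposition~\ref{F=0SOT} then gives attainment whenever $BB_2<\infty$, together with
\[
BB_2(\mu_0,\mu_1)=\sup_{\lambda_1\in C_b}\Bigl\{\int\lambda_0^{\lambda_1}\,\mathrm{d}\mu_0-\int\lambda_1\,\mathrm{d}\mu_1\Bigr\},\qquad
\lambda_0^{\lambda_1}(x)=\inf_{\mathbb{P}\in\mathfrak{P}_\Theta(\delta_x)}\mathbb{E}^{\mathbb{P}}\Bigl[\int_0^1|b_t|^2\,\mathrm{d}t+\lambda_1(X_1)\Bigr].
\]
Finiteness of $BB_2$ follows from the finite second moments: use the straight-line interpolation of the product coupling.

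The remaining step is to identify $\lambda_0^{\phi}=\phi^c$. For the upper bound, given $y$, the deterministic path $X_t=x+t(y-x)$ has cost $|x-y|^2+\phi(y)$, so $\lambda_0^\phi\le\phi^c$. For the lower bound, fix any $\mathbb{P}\in\mathfrak{P}_\Theta(\delta_x)$. Cauchy--Schwarz pathwise gives $\int_0^1|b_t|^2\,\mathrm{d}t\ge|X_1-x|^2$. Hence
\[
\int_0^1|b_t|^2\,\mathrm{d}t+\phi(X_1)\ge |x-X_1|^2+\phi(X_1)\ge\phi^c(x),
\]
and taking expectations yields $\lambda_0^\phi\ge\phi^c$. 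Substituting this identification into the duality formula gives the claim.

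The main obstacle is the applicability of Proposition~\ref{F=0SOT} with a degenerate $\Theta$ ($c\equiv0$) and an unbounded drift. The difficulty is that the coercivity assumption is stated over $(b,c)$ jointly, and compactness of $\mathfrak{P}_\Theta(\mu_0,\mu_1)$ must come from the superlinear growth of $|b|^2$ rather than from boundedness of $\Theta$. I would handle this by restricting attention to sublevel sets $\{J\le C\}$. On these sets $b$ is uniformly integrable in $L^2$, and the paths are equicontinuous in law, so tightness holds. The limit retains $c=0$ because $\Theta$ is closed; this is the point where the lower-semicontinuity argument of Proposition~\ref{F=0SOT} is invoked.
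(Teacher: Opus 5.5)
Your proposal is correct and follows essentially the paper's route. Both arguments specialise to $\Theta=\mathbb{R}^d\times\{0\}\times\{0\}$ with $L=|b|^2$, invoke Proposition~\ref{F=0SOT} for duality and attainment, and compare $\lambda_0^{\lambda_1}$ with the $c$-transform using straight-line paths.

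The differences are cosmetic. You identify $V=BB_2$ directly, since the drift is the pathwise derivative, rather than through Proposition~\ref{F=0V=Vhat}. You also prove $\lambda_0^{\phi}=\phi^c$ exactly via pathwise Cauchy--Schwarz, whereas the paper proves only $\lambda_0^{\lambda_1}\le\lambda_1^c$ and closes the gap with the separate weak-duality estimate $\int\phi^c\,\mathrm{d}\mu_0-\int\phi\,\mathrm{d}\mu_1\le\mathbb{E}^{\mathbb{P}}\int_0^1|b_t^{\mathbb{P}}|^2\,\mathrm{d}t$.
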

\begin{proof}
Set $D(\mu_0,\mu_1):=\sup_{\phi\in C_b(\mathbb{R}^d)}\bigl\{\int_{\mathbb{R}^d}\phi^c(x)\,\mu_0(\mathrm{d}x)-\int_{\mathbb{R}^d}\phi(y)\,\mu_1(\mathrm{d}y)\bigr\}$. 
Let $\Theta:=\mathbb{R}^d\times\{0\}\times\{0\}$ and define $L(t,\omega,b,c,F)=|b|^2$. By Proposition~\ref{F=0V=Vhat} and the definition of~\eqref{SOT} we obtain
\begin{align*}
V(\mu_0,\mu_1)=\widehat{V}(\mu_0,\mu_1)
&=\inf_{\mathbb{P}\in\widehat{\mathfrak{P}}_{\Theta}(\mu_0,\mu_1)}\mathbb{E}^{\mathbb{P}}\biggl[\int_{0}^{1} |b_t^{\mathbb{P}}|^2\,\mathrm{d}t\biggr]\\
&=\inf\biggl\{\mathbb{E}^{\mathbb{P}}\biggl[\int_{0}^{1} |b_t^{\mathbb{P}}|^2\,\mathrm{d}t\biggr]\ \bigg|\ \mathrm{d}X_t=b_{t}^{\mathbb{P}}\,\mathrm{d}t,\ \mathbb{P}\circ X_0^{-1}=\mu_0,\ \mathbb{P}\circ X_1^{-1}=\mu_1\biggr\}\\
&=BB_2(\mu_0,\mu_1).
\end{align*}
By Proposition~\ref{F=0SOT} the duality holds:
\begin{equation*}
V(\mu_0,\mu_1)=\mathcal{V}(\mu_0,\mu_1)=\sup_{\lambda_1\in C_b(\mathbb{R}^d)}\Bigl\{ \int_{\mathbb{R}^d}\lambda_0^{\lambda _1}(x)\,\mu_0(\mathrm{d}x)-\int_{\mathbb{R}^d}\lambda_1(x)\,\mu_1(\mathrm{d}x)\Bigr\},
\end{equation*}
and the infimum for $BB_2(\mu_0,\mu_1)$ of~\eqref{BB} is attained whenever it is finite.
For any $x,y\in\mathbb{R}^d$ and any $\mathbb{P}\in\mathfrak{P}_{\Theta}(\delta_x,\delta_y)$, the identity $X_1-X_0 = \int_0^1 b_t^{\mathbb{P}}\mathrm{d}t$ holds $\mathbb{P}$-a.s., whence $|y-x|^2 = \bigl|\mathbb{E}^{\mathbb{P}}[X_1-X_0]\bigr|^2 \le \mathbb{E}^{\mathbb{P}}[|X_1-X_0|^2]$. Using the definition of $\lambda_0^{\lambda_1}$ and choosing the constant drift $b_t^{\mathbb{P}^{\ast}}\equiv y-x$, we obtain
\begin{align*}
\lambda_0^{\lambda_1}(x)
&= \inf_{\mathbb{P}\in\mathfrak{P}_{\Theta}(\delta_x)}\mathbb{E}^{\mathbb{P}}\biggl[\int_{0}^{1}|b_{t}^{\mathbb{P}}|^2\,\mathrm{d}t+\lambda_1(X_1)\biggr] \\
&\leq \inf_{y\in\mathbb{R}^d}\inf_{\mathbb{P}\in\mathfrak{P}_{\Theta}(\delta_x,\delta_y)}\mathbb{E}^{\mathbb{P}}\biggl[\int_{0}^{1}|b_{t}^{\mathbb{P}}|^2\,\mathrm{d}t+\lambda_1(y)\biggr] \\
&\leq \inf_{y\in\mathbb{R}^d}\mathbb{E}^{\mathbb{P}^{\ast}}\biggl[\int_{0}^{1}|y-x|^2\,\mathrm{d}t+\lambda_1(y)\biggr] \\
&= \inf_{y\in\mathbb{R}^d}\bigl\{|y-x|^2+\lambda_1(y)\bigr\}
= \lambda_1^{c}(x),
\end{align*} 
where $\lambda_1^c(x) := \inf_{y\in\mathbb{R}^d} \{|x-y|^2 + \lambda_1(y)\}$. Hence $V(\mu_0,\mu_1)=\mathcal{V}(\mu_0,\mu_1)\le D(\mu_0,\mu_1)$.
To prove the reverse inequality, note that for any $\phi\in C_b(\mathbb{R}^d)$ we have $\phi^c(x)-\phi(y)\le|x-y|^2$. For any $\mathbb{P}\in\mathfrak{P}_{\Theta}(\mu_0,\mu_1)$, it follows that
\begin{align*}
\int_{\mathbb{R}^d}\phi^c(x)\,\mu_0(\mathrm{d}x)-\int_{\mathbb{R}^d}\phi(y)\,\mu_1(\mathrm{d}y)
&=\mathbb{E}^{\mathbb{P}}[\phi^c(X_0)-\phi(X_1)]\\
&\le \mathbb{E}^{\mathbb{P}}[|X_1-X_0|^2]\\
&=\mathbb{E}^{\mathbb{P}}\biggl[\biggl|\int_0^1 b_t^{\mathbb{P}}\,\mathrm{d}t\biggr|^2\biggr]
\le \mathbb{E}^{\mathbb{P}}\biggl[\int_0^1 |b_t^{\mathbb{P}}|^2 \,\mathrm{d}t\biggr],
\end{align*}
where the last step uses Jensen's inequality. Taking the supremum over $\phi$ and the infimum over $\mathbb{P}$ gives $D(\mu_0,\mu_1)\le V(\mu_0,\mu_1)$, which completes the proof.
\end{proof}
\begin{rem}
Combining Theorem~\ref{thm:OTdual} with the classical Kantorovich duality~\cite[Theorem 5.10]{Villani2009OT}, we obtain a new proof of the Benamou--Brenier formula~\eqref{OT=BB}.
\end{rem}

\subsection{The PDE characterisation of the martingale optimal transport problem}\label{PDEMOT}
The Benamou--Brenier formulation of martingale optimal transport has been established in~\cite{HuesmannTrevisan2019Bernoulli} and~\cite{BackhoffVeraguasBeiglbock2020AOP}; we adopt the former, as it fits more naturally with our framework. Let $\Theta:=\{0\}\times\mathbb{S}_+^{d}\times\{0\}$. For $\mu_0,\mu_1\in\mathcal{P}(\mathbb{R}^d)$ with finite second moments, the martingale optimal transport problem with cost $\ell:[0,1]\times\mathbb{R}^d\times\mathbb{S}_+^{d}\to\mathbb{R}^{+}$ is
\begin{equation}\label{MBB}
MBB(\mu_0, \mu_1):=\inf_{\mathbb{P}\in\mathfrak{P}_{\Theta}(\mu_0, \mu_1)}\mathbb{E}^{\mathbb{P}}\biggl[\int_{0}^{1}\ell(t,X_t,c_t^{\mathbb{P}})\,\mathrm{d}t\biggr].
\end{equation}
By Proposition~\ref{F=0SOT} we immediately obtain the following duality theorem.
\begin{thm}
Let $\mu_0,\mu_1\in\mathcal{P}(\mathbb{R}^d)$ have finite second moments and let $\ell$ (defined in \eqref{definel}) satisfy Assumptions~\ref{L} and~\ref{uL}. Then 
\begin{equation*}
MBB(\mu_0,\mu_1)=\sup_{\lambda_1\in C_b(\mathbb{R}^d)}\Bigl\{ \int_{\mathbb{R}^d}\lambda_0^{\lambda _1}(x)\,\mu_0(\mathrm{d}x)-\int_{\mathbb{R}^d}\lambda_1(x)\,\mu_1(\mathrm{d}x) \Bigr\},
\end{equation*}
where
\begin{equation*}
\lambda_{0}^{\lambda_1}(x)=\inf_{\mathbb{P}\in\mathfrak{P}_{\Theta}(\delta_x)}\mathbb{E}^{\mathbb{P}}\biggl[\int_0^1 \ell(t,X_t,c_t^{\mathbb{P}})\,\mathrm{d}t+\lambda_1(X_1)\biggr].
\end{equation*}
\end{thm}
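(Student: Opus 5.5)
This is a direct specialisation of Proposition~\ref{F=0SOT}, so the plan is to verify its hypotheses for $\Theta:=\{0\}\times\mathbb{S}_+^d\times\{0\}$ and the cost $L(t,\omega,b,c,F):=\ell(t,\omega_t,c)$, and then read off the duality.

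\textbf{Convexity and closedness of $\Theta$.} The set $\Theta$ is a product of $\{0\}$, the closed convex cone $\mathbb{S}_+^d$, and $\{0\}$. Hence it is closed and convex in $\mathbb{R}^d\times\mathbb{S}_+^d\times\mathcal{L}$.

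\textbf{Assumption~\ref{L} for $L$.} The map $(t,\omega)\mapsto(t,\omega_t)$ is jointly measurable on $[0,1]\times\Omega$, so $L$ is jointly measurable whenever $\ell$ is. Nonnegativity and convexity in $\theta=(0,c,0)$ are inherited directly from $\ell$.

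Joint lower semicontinuity in $(\omega,\theta)$ is more delicate, because the evaluation $\omega\mapsto\omega_t$ is not $J_1$-continuous at paths that jump at time $t$. I would handle this in two ways.
\begin{itemize}
\item First, the admissible laws are continuous semimartingales. One can therefore work on $\mathbb{C}([0,1],\mathbb{R}^d)$, which is closed in $\Omega$ and on which evaluation is continuous. Then $(\omega,c)\mapsto\ell(t,\omega_t,c)$ is lower semicontinuous there.
\item Second, if one insists on all of $\Omega$, the hypothesis ``$\ell$ satisfies Assumption~\ref{L}'' for $\ell$ defined in \eqref{definel} can be read as the corresponding lower semicontinuity of $L$ itself. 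This is how the statement phrases it.
\end{itemize}

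\textbf{Assumption~\ref{uL}.} Coercivity of $L$ is exactly the assumed coercivity of $\ell$ in $c$. Since $b=0$ on $\Theta$, the quotient in Assumption~\ref{uL} only involves $\max\{|b|,|c|\}=|c|\to\infty$. (Outside $\Theta$, $L$ is $+\infty$ by the extension convention.)

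\textbf{Conclusion.} Proposition~\ref{F=0SOT} now gives that the conclusions of Theorem~\ref{D} hold. By definition, $V(\mu_0,\mu_1)$ with this $\Theta$ and $L$ is exactly $MBB(\mu_0,\mu_1)$ from \eqref{MBB}, and the $\lambda_0^{\lambda_1}$ of \eqref{Rc} is exactly the stated one. Since $b=0$ and $F=0$, every $\mathbb{P}\in\mathfrak{P}_\Theta$ makes $X$ a continuous local martingale, so the feasible sets coincide. The equality $MBB=\mathcal{V}$ follows.

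The finite second moment assumption is not needed for the duality itself. It only guarantees consistency with the Huesmann--Trevisan setting, where feasibility forces true martingales in the convex order.

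The main obstacle is the lower semicontinuity step, i.e.\ reconciling the Skorokhod topology with pointwise evaluation. I would resolve it by restricting to continuous paths as in the first bullet above.
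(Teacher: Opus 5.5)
Your proposal is correct and takes the same route as the paper. The paper's proof is the one-line invocation of Proposition~\ref{F=0SOT} with $\Theta=\{0\}\times\mathbb{S}_+^d\times\{0\}$ and $L(t,\omega,\theta)=\ell(t,\omega_t,c)$, so that $V=MBB$ by definition. Your hypothesis checks only make explicit what the paper leaves implicit, in particular that lower semicontinuity is needed only on the closed set $\mathbb{C}([0,1],\mathbb{R}^d)$, where evaluation is $J_1$-continuous, and that the second-moment assumption plays no role in the duality.
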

We now give an equivalent formulation in terms of the FPK equation. Set
\begin{equation}\label{MPDE}
MPDE(\mu_0, \mu_1):=\inf_{(c, \rho)}\int_0^1\!\!\int_{\mathbb{R}^d} \ell(t,x,c_t(x)) \,\rho_t(\mathrm{d}x)\,\mathrm{d}t,
\end{equation}
where the infimum is taken over all pairs $(c, \rho)$ such that $\rho_0=\mu_0$, $\rho_1=\mu_1$, and $(c, \rho)$ solves
\begin{equation}
\partial_t\rho_t=\frac{1}{2} D_{xx}^{2}(c_t\cdot\rho_t)
\end{equation}
in the sense of distributions.  

By Proposition~\ref{F=0SOT} and~\ref{F=0PDE}, we obtain the following generalisation of~\cite[Theorem~3.3]{HuesmannTrevisan2019Bernoulli}, in which the $p$-growth condition on the cost function is removed.
\begin{thm}
Let $\mu_0,\mu_1\in\mathcal{P}(\mathbb{R}^d)$ have finite second moments and let $\ell$ (defined in \eqref{definel}) satisfy Assumptions~\ref{L} and~\ref{uL}. Then 
\begin{equation*}
MBB(\mu_0,\mu_1)=MPDE(\mu_0,\mu_1),
\end{equation*}
and the infimum for the problem $MBB(\mu_0,\mu_1)$ of~\eqref{MBB} is achieved whenever it is finite.
\end{thm}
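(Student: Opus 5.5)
The statement to prove is $MBB(\mu_0,\mu_1)=MPDE(\mu_0,\mu_1)$, together with attainment. The plan is to treat it as the special case $\Theta:=\{0\}\times\mathbb{S}_+^d\times\{0\}$ of Proposition~\ref{F=0PDE}. That case gives $V(\mu_0,\mu_1)=PDE(\mu_0,\mu_1)$, so two identifications remain: $V=MBB$ and $PDE=MPDE$.

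\textbf{Step 1: $V=MBB$ and attainment.} The set $\Theta=\{0\}\times\mathbb{S}_+^d\times\{0\}$ is closed and convex. The cost $L(t,\omega,b,c,F)=\ell(t,\omega_t,c)$ satisfies Assumption~\ref{L}, since evaluation $\omega\mapsto\omega_t$ is continuous on $\mathbb{C}$-valued paths and is measurable on $\Omega$. I would check lower semicontinuity in $\omega$ on the relevant support. Laws in $\mathfrak{P}_\Theta$ are continuous local martingales, so only continuous paths matter; if needed, I would replace $\ell(t,\omega_t,c)$ by its lower semicontinuous envelope in $\omega$, which agrees on continuous paths. With these hypotheses, $V$ coincides with \eqref{MBB} by definition. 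Proposition~\ref{F=0SOT} then yields an optimiser whenever $MBB<\infty$.

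\textbf{Step 2: $PDE=MPDE$.} For $\Theta=\{0\}\times\mathbb{S}_+^d\times\{0\}$ the generator reduces to $\mathscr{L}^\theta_s f=\tfrac12 c(s,x)\cdot D^2_{xx}f$. The weak FPK formulation with test functions $f\in C_c^2$ then reads
\[
\int f\,\mathrm{d}\rho_t=\int f\,\mathrm{d}\rho_0+\int_0^t\!\!\int \tfrac12 c_s\cdot D^2_{xx}f\,\mathrm{d}\rho_s\,\mathrm{d}s .
\]
This is the distributional equation $\partial_t\rho=\tfrac12 D^2_{xx}(c\rho)$, once one matches the admissibility classes. There are two technical points here.
\begin{enumerate}
\item Distributional solutions in \eqref{MPDE} are tested against $C_c^\infty([0,1)\times\mathbb{R}^d)$. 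Since $\mathcal{P}(\mathbb{R}^d)$-valued solutions are narrowly continuous after modification on a null set of times, the two formulations are equivalent, using density of $C_c^\infty$ in $C_c^2$.
\item In \eqref{P} the cost $\ell(t,x,c_t(x))$ is integrated against $\rho_t$, exactly as in \eqref{MPDE}, so the objective functionals coincide.
\end{enumerate}

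\textbf{Main obstacle.} I expect the hardest part to be the integrability needed for the superposition principle in Proposition~\ref{F=0PDE}, since $c$ is unbounded here and Assumption~\ref{B} fails. I would handle it with coercivity, Assumption~\ref{uL}. If $MPDE<\infty$, coercivity gives
\[
\int_0^1\!\!\int |c_t|\,\mathrm{d}\rho_t\,\mathrm{d}t<\infty ,
\]
and that is the integrability condition required by the continuous superposition principle (Figalli/Trevisan type), as used in \cite[Theorem~3.8]{GuoLoeperWang2022MF}. The same bound, together with the finite second moments of $\mu_0,\mu_1$, ensures that the resulting local martingale is a true martingale. This is the reason the $p$-growth condition of \cite{HuesmannTrevisan2019Bernoulli} can be dropped.

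Combining the steps gives
\[
MBB(\mu_0,\mu_1)=V(\mu_0,\mu_1)=PDE(\mu_0,\mu_1)=MPDE(\mu_0,\mu_1),
\]
with attainment inherited from Proposition~\ref{F=0SOT}.
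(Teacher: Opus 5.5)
Your proposal is correct and matches the paper's argument: the paper obtains this theorem directly from Propositions~\ref{F=0SOT} and~\ref{F=0PDE} specialised to $\Theta=\{0\}\times\mathbb{S}_+^d\times\{0\}$, as you do. The identifications $V=MBB$ and $PDE=MPDE$, and the coercivity-based integrability for superposition, are details the paper leaves implicit. Your lower-semicontinuous-envelope detour is unnecessary: the hypothesis already requires $L(t,\omega,\theta)=\ell(t,\omega_t,\theta)$ to satisfy Assumption~\ref{L}, and in any case all laws in $\mathfrak{P}_\Theta$ are carried by continuous paths, where evaluation is continuous.
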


\subsection{The martingale optimal transport problem on the Skorokhod \texorpdfstring{$J_1$}{J1}-space}\label{MOTJ}
Martingale optimal transport on the Skorokhod space was introduced by Dolinsky and Soner~\cite{DS2015SPA}, who proved a Kantorovich-type duality for path-dependent payoffs under the $J_1$-topology, albeit under strong regularity assumptions on the cost (uniform $J_1$-continuity and linear growth). Subsequent works~\cite{GuoTanTouzi2017SPA, CKPS2021MA} have relaxed these conditions by employing alternative topologies, but at the expense of working in non-metrisable settings that complicate numerical discretisation and constructive arguments.  In contrast, the Polish $J_1$-topology supports explicit time-discretisation schemes; a distinctive feature of our work is that the results are obtained without imposing the usual conditions on the filtered probability space, thereby avoiding delicate measurability technicalities.
We first define
\begin{equation}
\mathfrak{P}_{\Theta}^M(\mu_0,\mu_1)
:=\bigl\{\mathbb{P}\in\mathfrak{P}_{\Theta}(\mu_0,\mu_1)\mid
X\text{ is a }\mathbb{P}\text{-}\mathbb{F}\text{-martingale}\bigr\}.
\end{equation}
To obtain a precise characterisation, in terms of semimartingale characteristics, of when a semimartingale is a true martingale, we impose the following condition on $\Theta$.
\begin{ass}\label{Theta_M}
A set $\Theta \subseteq \mathbb{R}^d \times \mathbb{S}_+^d \times \mathcal{L}$
satisfies Assumption~\ref{Theta_M} if 
\[
b+\int_{\mathbb{R}^d} [z-h(z)]\,F(\mathrm{d}z)=0,\qquad\forall(b,c,F)\in\Theta.
\]
\end{ass}
\begin{rem}
This condition does not depend on the choice of the truncation function $h$ (see \cite[Proposition~2.24]{JacodShiryaev2003}).
\end{rem}
Consequently, we have the following property from~\cite[Remark~2.3]{LiuNeufeld2019TAMS}.
\begin{prop}\label{V=V^M}
If $\mu_0$ has finite first moment and $\Theta$ satisfies Assumptions~\ref{B} and~\ref{Theta_M}, then
\begin{equation}
\mathfrak{P}_{\Theta}^M(\mu_0,\mu_1)=\mathfrak{P}_{\Theta}(\mu_0,\mu_1).
\end{equation}
\end{prop}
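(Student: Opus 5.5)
The inclusion $\mathfrak{P}_{\Theta}^M(\mu_0,\mu_1)\subseteq\mathfrak{P}_{\Theta}(\mu_0,\mu_1)$ holds by definition, so only the converse needs proof. Fix $\mathbb{P}\in\mathfrak{P}_{\Theta}(\mu_0,\mu_1)$. The plan has three steps: write $X$ in its canonical decomposition without truncation, use Assumption~\ref{Theta_M} to kill the drift, and use Assumption~\ref{B} to upgrade the resulting local martingale to a true martingale.

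\textbf{Step 1 (special semimartingale and drift).} Under Assumption~\ref{B} we have $\int(|z|^2\wedge|z|)\,F_t^{\mathbb{P}}(\mathrm{d}z)\le K$ $\mathbb{P}\times\mathrm{d}t$-a.s. Since $h$ is bounded and equals the identity near $0$, there are $\delta>0$ and $C$ with $|z-h(z)|\le C|z|\mathbf 1_{\{|z|>\delta\}}\le C'(|z|^2\wedge|z|)$. Hence $\int_0^1\!\int|z-h(z)|\,F_t^{\mathbb{P}}(\mathrm{d}z)\,\mathrm{d}t\le C'K$, a deterministic bound. By \cite[II.2.29]{JacodShiryaev2003}, $X$ is therefore a special semimartingale. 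Its predictable finite-variation part is
\[
\int_0^\cdot\Bigl(b_t^{\mathbb{P}}+\int[z-h(z)]\,F_t^{\mathbb{P}}(\mathrm{d}z)\Bigr)\mathrm{d}t,
\]
and this vanishes $\mathbb{P}$-a.s. because the characteristics lie in $\Theta$ $\mathbb{P}\times\mathrm{d}t$-a.s. and Assumption~\ref{Theta_M} holds there. Consequently $X-X_0$ is a local martingale, namely $M^c+(z*(\mu^X-\nu^{\mathbb{P}}))$.

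\textbf{Step 2 (true martingale).} This is the main obstacle, since local martingales need not be martingales. I would first show $\mathbb{E}[\sup_{t\le1}|X_t-X_0|]<\infty$ and then conclude via the dominated localisation argument. Split the jump part into the small-jump piece $(z\mathbf 1_{|z|\le1})*(\mu^X-\nu)$ and the large-jump piece $(z\mathbf 1_{|z|>1})*(\mu^X-\nu)$.
\begin{itemize}
\item For $M^c$ plus the small-jump piece, the predictable quadratic variation is bounded by $\int_0^1\bigl(|c_t|+\int|z|^2\mathbf 1_{|z|\le1}F_t(\mathrm{d}z)\bigr)\mathrm{d}t\le 2K$. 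Doob's $L^2$ inequality then gives a square-integrable martingale.
\item For the large-jump piece, the total variation has expectation at most $2\,\mathbb{E}\int_0^1\!\int|z|\mathbf 1_{|z|>1}F_t(\mathrm{d}z)\,\mathrm{d}t\le 2K$, using that the compensator preserves expectations of nonnegative predictable integrands.
\end{itemize}
Both pieces therefore have integrable supremum, and so does $X-X_0$.

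\textbf{Step 3 (conclusion).} A local martingale whose running supremum is integrable is a uniformly integrable martingale. Since $\mu_0$ has finite first moment, $X_0\in L^1$, so $X=X_0+(X-X_0)$ is a $\mathbb{P}$-martingale. Working with the usual augmentation changes nothing, as noted in Section~\ref{2}, and the martingale property transfers back to the raw filtration $\mathbb{F}$ by the tower property. Hence $\mathbb{P}\in\mathfrak{P}_{\Theta}^M(\mu_0,\mu_1)$.
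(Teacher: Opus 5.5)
Your argument is correct. The paper gives no proof of its own and simply cites Remark~2.3 of Liu--Neufeld. The standard reasoning behind that citation is the one you supply: under Assumption~\ref{B}, $X$ is special with predictable drift $\int_0^\cdot\bigl(b_t^{\mathbb{P}}+\int[z-h(z)]\,F_t^{\mathbb{P}}(\mathrm{d}z)\bigr)\mathrm{d}t$, which Assumption~\ref{Theta_M} makes vanish. The bounds in Assumption~\ref{B} then give the remaining local martingale an integrable supremum, so with $X_0\in L^1$ it is a true martingale; your split into an $L^2$-controlled continuous/small-jump part and an $L^1$-controlled large-jump part fills in those details cleanly.
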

For $\mu_0,\mu_1\in\mathcal{P}(\mathbb{R}^d)$ with finite first moments, set
\begin{align}
V^M(\mu_0,\mu_1) &:= \inf_{\mathbb{P}\in\mathfrak{P}_{\Theta}^M(\mu_0,\mu_1)} J(\mathbb{P}),\\
\mathcal{V}^M(\mu_0,\mu_1) &:=\sup_{\lambda_1\in C_b(\mathbb{R}^d)}
\Bigl\{ \int_{\mathbb{R}^d} \lambda_0^{\lambda_1}(x)\,\mu_0(\mathrm{d}x)
      -\int_{\mathbb{R}^d}\lambda_1(x)\,\mu_1(\mathrm{d}x) \Bigr\},
\end{align}
where $\lambda_0^{\lambda_1}$ is defined as in~\eqref{Rc} with $\mathfrak{P}_{\Theta}^M(\delta_x)$ in place of $\mathfrak{P}_{\Theta}(\delta_x)$. Under Assumptions~\ref{B},~\ref{J} and~\ref{Theta_M}, Proposition~\ref{V=V^M} yields $\mathfrak{P}_{\Theta}^M(\mu_0,\mu_1)= \mathfrak{P}_{\Theta}(\mu_0,\mu_1)$; consequently $V^M = V$ and $\mathcal{V}^M = \mathcal{V}$.  Theorem~\ref{D} therefore immediately implies the following duality.
\begin{cor}
Assume that $\mu_0,\mu_1\in\mathcal{P}(\mathbb{R}^d)$ have finite first moments. Let $\Theta\subseteq\mathbb{R}^d\times\mathbb{S}_+^d\times\mathcal{L}$ be closed, convex, and satisfy Assumptions~\ref{B}, \ref{J} and~\ref{Theta_M}. Moreover, let the cost function $L$ satisfy Assumption~\ref{L}. Then
\begin{equation}
V^M(\mu_0,\mu_1)=\mathcal{V}^M(\mu_0,\mu_1).
\end{equation}
\end{cor}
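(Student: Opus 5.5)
The corollary is a direct transfer of Theorem~\ref{D} to the martingale setting. The whole point is to identify both the primal and dual martingale problems with their unconstrained counterparts, so the plan has three steps.

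\emph{Step 1: the primal side.} Since $\mu_0$ has finite first moment and $\Theta$ satisfies Assumptions~\ref{B} and~\ref{Theta_M}, Proposition~\ref{V=V^M} gives
\[
\mathfrak{P}_{\Theta}^M(\mu_0,\mu_1)=\mathfrak{P}_{\Theta}(\mu_0,\mu_1).
\]
Hence $V^M(\mu_0,\mu_1)=V(\mu_0,\mu_1)$ directly from the definitions.

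\emph{Step 2: the dual side.} Here the same identification must hold with marginals replaced by the initial condition only. For each $x\in\mathbb{R}^d$, the Dirac mass $\delta_x$ has finite first moment, so I need
\[
\mathfrak{P}_{\Theta}^M(\delta_x)=\mathfrak{P}_{\Theta}(\delta_x).
\]
Proposition~\ref{V=V^M} is phrased with a fixed terminal law. I would either apply it for every $\mu_1$ and take the union over $\mu_1$, or observe that its proof (\cite[Remark~2.3]{LiuNeufeld2019TAMS}) uses only the initial moment.

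That proof runs as follows. Assumption~\ref{Theta_M} makes $X$ a local martingale, since its drift in the canonical decomposition with the truncation $h(z)=z$ vanishes. Assumption~\ref{B} bounds $\int(|z|^2\wedge|z|)F(\mathrm{d}z)$ and $|c|$. These bounds give uniform control of $\mathbb{E}\sup_{t\le1}|X_t-X_0|$, via Burkholder--Davis--Gundy for the continuous part and an $L^1$/$L^2$ split of the jump martingale into small and large jumps. Together with the finite first moment of $X_0$, this upgrades the local martingale to a true martingale.

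It follows that $\lambda_0^{\lambda_1}$ defined through $\mathfrak{P}_{\Theta}^M(\delta_x)$ coincides with the one in \eqref{Rc} for every $x$. Integrating against $\mu_0$ then gives $\mathcal{V}^M(\mu_0,\mu_1)=\mathcal{V}(\mu_0,\mu_1)$.

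\emph{Step 3: conclusion.} The hypotheses of Theorem~\ref{D} hold: $\Theta$ is closed and convex, satisfies Assumptions~\ref{B} and~\ref{J}, and $L$ satisfies Assumption~\ref{L}. Theorem~\ref{D} therefore yields $V=\mathcal{V}$, and combining with Steps 1 and 2 gives
\[
V^M(\mu_0,\mu_1)=V(\mu_0,\mu_1)=\mathcal{V}(\mu_0,\mu_1)=\mathcal{V}^M(\mu_0,\mu_1).
\]
The only point needing care, and the main obstacle, is Step 2: checking that the martingale property depends only on the initial moment and not on the terminal marginal. I expect this to follow from the integrability estimate above.
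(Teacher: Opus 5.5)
Your proposal is correct and follows the paper's argument: Proposition~\ref{V=V^M} identifies the martingale-constrained feasible sets with the unconstrained ones, so $V^M=V$ and $\mathcal{V}^M=\mathcal{V}$, and Theorem~\ref{D} then gives the duality. Your explicit check that $\mathfrak{P}^M_\Theta(\delta_x)=\mathfrak{P}_\Theta(\delta_x)$ for every $x$ fills in the dual-side step that the paper leaves implicit. The union over $\mu_1$ suffices for this, since Proposition~\ref{V=V^M} imposes no condition on $\mu_1$.
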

Finally, we specialise the above to a setting that substantially weakens the assumptions of~\cite[Theorem~2.9]{DS2015SPA}.
\begin{cor}
Assume $\mu_0=\delta_1$ and that $\mu_1\in\mathcal{P}(\mathbb{R}^d)$ has finite first moments.  Set $L(t,\omega,b,c,F)=G(\omega)$, where $G:\Omega\to[0,+\infty)$ is lower semicontinuous. Let
$\Theta\subseteq\mathbb{R}^d\times\mathbb{S}_+^d\times\mathcal{L}$ be closed, convex, and satisfy Assumptions~\ref{B}, \ref{J} and~\ref{Theta_M}. Then
\begin{equation}
V^M(\delta_1, \mu_1)=\mathcal{V}^M(\delta_1, \mu_1).
\end{equation}
\end{cor}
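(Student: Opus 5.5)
The plan is to derive this corollary directly from the preceding corollary, i.e.\ from Proposition~\ref{V=V^M} combined with Theorem~\ref{D}, by checking that the specific data $\mu_0=\delta_1$ and $L(t,\omega,b,c,F)=G(\omega)$ fit the standing hypotheses. Since $\delta_1$ has finite first moment and $\mu_1$ is assumed to have finite first moment, the moment conditions hold. By Proposition~\ref{V=V^M} and Assumption~\ref{Theta_M}, we then get $\mathfrak{P}_{\Theta}^M(\delta_1,\mu_1)=\mathfrak{P}_{\Theta}(\delta_1,\mu_1)$. For the dual side we also need $\mathfrak{P}^M_{\Theta}(\delta_x)=\mathfrak{P}_\Theta(\delta_x)$ for every $x$. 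This is the same argument with a Dirac initial law: under Assumptions~\ref{B} and~\ref{Theta_M} the canonical process has bounded drift in the "$h$-free" sense and integrable big jumps ($\int(|z|^2\wedge|z|)F(\mathrm{d}z)$ is bounded). Hence $X$ is a local martingale with integrable supremum on $[0,1]$, and therefore a true martingale. Consequently $V^M=V$ and $\mathcal{V}^M=\mathcal{V}$ at $(\delta_1,\mu_1)$.

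It then remains to verify Assumption~\ref{L} for $L(t,\omega,\theta):=G(\omega)$:
\begin{itemize}
\item \emph{Nonnegativity} holds by hypothesis.
\item \emph{Joint measurability} holds because $G$ is lower semicontinuous on the Polish space $\Omega$, hence Borel, and $L$ is constant in $(t,\theta)$.
\item \emph{Joint lower semicontinuity in $(\omega,\theta)$} holds because $L$ is the composition of $G$ with the continuous projection $(\omega,\theta)\mapsto\omega$.
\item \emph{Convexity in $\theta$} is trivial, since $L$ is constant in $\theta$.
\end{itemize}
With these checks, Theorem~\ref{D} applies and gives $V(\delta_1,\mu_1)=\mathcal{V}(\delta_1,\mu_1)$. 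Chaining the identities yields $V^M(\delta_1,\mu_1)=\mathcal{V}^M(\delta_1,\mu_1)$. Note that $J(\mathbb{P})=\mathbb{E}^{\mathbb{P}}[G]$ here, since $\int_0^1 G\,\mathrm{d}t=G$.

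The only step I expect to need some care is the identification $\mathcal{V}^M=\mathcal{V}$. Proposition~\ref{V=V^M} is stated for the two-marginal sets, while $\lambda_0^{\lambda_1}$ involves $\mathfrak{P}_\Theta(\delta_x)$ with free terminal law. I would handle this by reapplying the true-martingale argument of~\cite[Remark~2.3]{LiuNeufeld2019TAMS} with initial law $\delta_x$, which needs no constraint on the terminal law. Alternatively, one can use \eqref{intRc} to rewrite $\int\lambda_0^{\lambda_1}\,\mathrm{d}\delta_1=\inf_{\mathbb{P}\in\mathfrak{P}_\Theta(\delta_1)}\mathbb{E}^{\mathbb{P}}[G+\lambda_1(X_1)]$, and then take the infimum over the terminal law $\mu\in\mathcal P(\mathbb R^d)$ as in the proof of Theorem~\ref{D}; every such $\mathbb{P}$ lies in some $\mathfrak{P}_\Theta(\delta_1,\mu)=\mathfrak{P}^M_\Theta(\delta_1,\mu)$ because $X_1$ is automatically integrable.
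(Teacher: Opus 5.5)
Your proposal is correct and takes the paper's route: the statement is just the preceding martingale-duality corollary (Proposition~\ref{V=V^M} combined with Theorem~\ref{D}) specialised to $\mu_0=\delta_1$ and $L=G(\omega)$, which satisfies Assumption~\ref{L} by exactly the checks you list. Your additional verification that $\mathfrak{P}^M_{\Theta}(\delta_1)=\mathfrak{P}_{\Theta}(\delta_1)$, which is needed for $\mathcal{V}^M=\mathcal{V}$, is a step the paper asserts without proof, and your argument for it is sound.
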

\begin{rem}
Using the Snell envelope approach, together with the Doob--Meyer decomposition, one can obtain an equivalent representation of $\mathcal{V}^M(\delta_1, \mu_1)$ that precisely recovers Theorem~2.9 in~\cite{DS2015SPA}. For more details, see Theorem~2.4 in~\cite{GuoTanTouzi2016SIAMJCO}.
\end{rem}

\end{document}